\newcommand{\cH}{\mathcal H}
\begin{document}

\title[Analytic Torsion on Manifolds with Conical Singularities]
{The Metric Anomaly of Analytic Torsion on Manifolds with Conical 
Singularities}

\date{\today}

\author{Werner M\"uller}
\address{Mathematisches Institut,
Universit\"at Bonn,
53115 Bonn,
Germany}
\email{mueller@math.uni-bonn.de}
\urladdr{www.math.uni-bonn.de/people/mueller}

\author{Boris Vertman}
\address{Mathematisches Institut,
Universit\"at Bonn,
53115 Bonn,
Germany}
\email{vertman@math.uni-bonn.de}
\urladdr{www.math.uni-bonn.de/people/vertman}

\thanks{Both authors were partially supported by the 
Hausdorff Center for Mathematics}

\subjclass[2000]{58J52; 34B24}

\begin{abstract}
{In this paper we study the analytic torsion of an 
odd-dimensional manifold with isolated conical 
singularities. First we show that the analytic torsion is invariant under
 deformations of the metric which are of higher order near the singularities. 
Then we identify the metric anomaly of analytic torsion for a bounded 
generalized cone at its regular boundary in terms of spectral information of 
the cross-section. In view of previous computations of analytic torsion on 
cones, this leads to a detailed geometric 
identification of the topological and spectral contributions to analytic 
torsion, arising from the conical 
singularity. The contribution exhibits a torsion-like spectral invariant of 
the cross-section of the cone, which we study under scaling of the metric on 
the cross-section.}
\end{abstract}

\maketitle
\tableofcontents

%%%%%%%%%%%%%%%%%%
\section{Introduction} 
%%%%%%%%%%%%%%%%%%

In this paper we study the Ray-Singer analytic torsion of a compact 
odd-dimensional Riemannian manifold with an isolated conical singularity. This means
the following. We consider an open Riemannian manifold $(M,g^M)$ 
of odd dimension which admits a decomposition 
\[
M=\cCN\cup_N X,
\]
into  a compact Riemannian manifold $X$ with boundary $N$ and an open
truncated generalized cone $\cCN$ over $N$. Here
$\cCN=(0,1]\times N$, and the metric on $\cCN$ takes the 
special form
\begin{align*}
g^M|_{\cCN}:=dx^2\oplus x^2g^N(x), \ x\in (0,1),
\end{align*}
where $g^N(x)$ is a family of metrics on $N$ which is smooth up to $0$. 
Our goal is to study the analytic torsion of such manifolds. 
We mention that the case of an even-dimensional cone has been 
studied by the second named author \cite{Ver:TMA} and 
Hartmann-Spreafico \cite{HarSpr:TAT}. This case is simpler than the odd-dimensional case.

The analytic torsion of a compact Riemannian manifold, equipped with a flat
Hermitian vector bundle,  was introduced by
Ray and Singer \cite{RaySin:RTA} as an analytic counterpart of the 
Reidemeister-Franz torsion. The latter is defined combinatorial, whereas the 
analytic torsion is 
defined as a weighted product of regularized determinants of the
Laplacian on $p$-forms with values in the flat bundle. The conjecture of Ray 
and Singer that the two invariants are equal was proved independently by
Cheeger \cite{Che:ATA} and M\"uller \cite{Mue:ATA}.  L\"uck
\cite{Lue:AAT} and Vishik \cite{Vis:GRS} extended this result to compact
Riemannian manifolds with boundary under the assumption that the metric is
a product near the boundary. Equality still holds up to a term which is given
by the Euler characteristic of the boundary. If the metric is not a product
near the boundary, additional terms, called anomaly, arise in the difference 
between the  analytic and the combinatorial torsion (see \cite{DaiFan:ATA}
and \cite{BruMa:AAF}).
 
A natural problem is to determine whether the equality of analytic
torsion and Reidemeister torsion for compact Riemannian manifolds has any
analogue for manifolds with singularities. To begin with one needs to define
both invariants for such spaces. On the combinatorial site, Dar \cite{Dar:IRT} 
has defined the intersection R-torsion for pseudo-manifolds. Furthermore
he has  shown that the analytic torsion is  well-defined for compact 
manifolds with conical singularities. Therefore it makes sense
to study the relation between analytic torsion and intersection torsion for
compact manifolds with conical singularities. The present paper is a 
contribution to this goal. As already mentioned, we study the analytic torsion
of a manifold with conical singularities.

First we derive a variational formula which shows that we may assume
that the metric in a neighborhood of the cone tip is an exact cone metric. 
Using a splitting formula (which we don't prove in this paper) together with
\cite{BruMa:AAF}, it follows that
one can replace $M$ by an exact cone $C(N)$, i.e., $C(N)=(0,1]\times N$,  
equipped with the metric $dx^2+x^2 g^N(0)$, and study the problem for this 
case. 

In \cite{Ver:ATO} the second named author has derived a formula
which expresses the analytic torsion of $C(N)$ in terms of spectral invariants
of $N$. In the present paper we will identify some of these terms. More precisely
we will show that the analytic torsion of $C(N)$ is the sum of three terms.
The first one is a linear combination of Betti numbers. The second term 
equals the metric anomaly of Br\"uning and Ma \cite{BruMa:AAF}.
The remaining term is a torsion-like spectral invariant of the cross section 
$N$.

The paper is organized as follows. In  Section \ref{statement}
we will summarize the basic facts about analytic torsion of manifolds with
conical singularities and state the main results. 
In Section \ref{section-variation} we will establish invariance of the 
analytic torsion under higher order 
deformations of the conical metric, assuming essential self-adjointness of 
the Gau\ss-Bonnet operator. The result is a special case of a more 
general investigation of analytic torsion on manifolds with incomplete
edge singularities by Mazzeo and the second author in \cite{MazVer:ATO}. 
Essential self-adjointness of the Gau\ss-Bonnet operator is posed only within the limits of 
Section \ref{section-variation} and can always be achieved by appropriate rescaling 
of $g^N(0)$.

In Section \ref{section-scaling} we discuss the metric anomaly formula for 
Ray-Singer analytic torsion on manifolds with boundary,
established by Br\"uning-Ma in \cite{BruMa:AAF}. Especially we prove that the 
metric anomaly is scaling invariant.

We proceed in Section \ref{section-truncated} with computing analytic torsion 
of a truncated cone.  This leads in Section \ref{section-anomaly} to an 
explicit identification of the metric anomaly at the regular boundary 
of the cone in terms of the spectral information of the cross-section $N$. 
This shows that the main contribution of the conical singularity to the
analytic torsion is given by  a  torsion-like spectral invariant of the 
cross section.

In Section \ref{section-torsion-like} we study the behavior of this 
invariant under scaling of the  metric on the base of the cone
 and compute its value for some examples.

%%%%%%%%%%%%%%%%%%%%%%%%%%%%%%%%%%
\section*{Acknowledgments}
%%%%%%%%%%%%%%%%%%%%%%%%%%%%%%%%%%
It is a pleasure for the second author to acknowledge the helpful discussions 
with Rafe Mazzeo and Jeff Cheeger. Furthermore both authors would like to 
thank Jochen Br\"uning and Xiaonan Ma for explaining important aspects of the 
metric anomaly for analytic torsion on manifolds with boundary, 
as well as Matthias Lesch. The second author would like 
to express his gratitude to Stanford University where the research has 
partially 
been conducted for hospitality, and Hausdorff Center for Mathematics in Bonn.

%%%%%%%%%%%%%%%%%%%%%%%%%%%%%%%%%%
\section{Preliminaries and statement of main results}
\label{statement}
%%%%%%%%%%%%%%%%%%%%%%%%%%%%%%%%%%

In this section we summarize some facts about the analytic torsion for
manifolds with conical singularities and we state the main results.

%%%%%%%%%%%%%%%%%%%%%%%%%%%%%%%%%%
\subsection{Definition and variation of Analytic Torsion}
%%%%%%%%%%%%%%%%%%%%%%%%%%%%%%%%%%

Let $(M^{m},g^M)$ be an odd-dimensional oriented Riemannian 
manifold 
with an isolated conical singularity. Thus $M=\cCN\cup_NX$, where 
$(X,g|_X)$ is a 
compact Riemannian manifold with boundary $\partial X=N$ and $\cCN$
is a truncated generalized cone, which means that $\cCN=(0,1)
\times N$ and  the metric on $\cCN$ is given by
\begin{align}\label{metric1}
g^M|_{\cCN}:=dx^2\oplus x^2g^N(x), 
\end{align}
where $\ g^N\in C^{\infty}\left([0,1], \textup{Sym}^2(T^*N)\right)$. Actually
the assumptions on the the behavior of $g^N(x)$ at $x=0$ can be weakened
(see \cite[Section 2]{BruLes:KHT}). For simplicity we will work with the 
stronger
assumption that $g^N(x)$ is smooth up to $0$.  

Fix a base point $x_0\in M$ and let 
\begin{equation}\label{repr}
\rho:\pi_1(M,x_0)\to U(n,\C)
\end{equation}
be a unitary representation of the fundamental group $\pi_1(M,x_0)$. 
Let $(E,\nabla,h^E)$ be the flat Hermitian vector bundle over $M$, associated
to the representation $\rho$. Here $h^E$ is the flat Hermitian metric 
on $E$, induced by the standard Hermitian inner product on $\C^n$. We may 
assume that $x_0=(t,y_0)\in\cCN$. Then $E|_{\cCN}$ is the flat bundle 
associated to the composition $\rho_N$ of $\pi_1(\cCN,x_0)\to \pi_1(M,x_0)$ and
$\rho$. Note that $\pi_1(N,y_0)\cong\pi_1(\cCN,x_0)$. So we may regard $\rho_N$
as a representation of $\pi_1(N,y_0)$. Let $(E_N,\nabla_N,h_N)$ be the 
associated flat Hermitian bundle. Then we have
\begin{equation}\label{flatbd}
E|_{\cCN}\cong p^*(E_N)\cong (0,1]\times E_N,
\end{equation}
where $p\colon (0,1]\times N\to N$ is the canonical projection. Moreover the
isomorphism \eqref{flatbd} is compatible with the Hermitian metrics and the
flat connections. The flat covariant derivatives $\nabla$ and $\nabla_N$ 
are related as follows. For $s\in C^{\infty}([0,1], C^{\infty}(E_N))\cong
\Gamma(E|_{\cCN})$ we have
\begin{align}
\nabla s= \frac{\partial s}{\partial x}\otimes dx + \nabla_N s.
\end{align}
Let $(\Omega^*(M,E),d_*)$ be the associated twisted de Rham complex, 
where $\Omega^*(M,E)$ denotes the space of $E$-valued differential forms. 
Denote by $\Omega^*_c(M,E)$ the subspace of differential forms with 
compact support. Let $\Delta_k$ be the corresponding Laplace operator on
$k$-forms. We regard $d_k$ and $\Delta_k$ as unbounded operators in 
$L^2\Omega^k(M,E)$ with domain $\Omega^k_c(M,E)$.

Consider the maximal extension $d_{k,\max}$ of $d_k$ in 
$L^2\Omega^k(M,E)$ with domain $\dom (d_{k,\max})$.
The minimal extension $d_{k,\min}$ of $d_k$ with domain 
$\dom (d_{k,\min}) \subset \dom (d_{k,\max})$ is the 
graph closure of $d_k$. Ideal boundary conditions for the 
de Rham complex $(\Omega^*(M,E),d_*)$ is a choice 
of closed extensions $D_k$ of $d_k$ for each $k=0,...,\dim M$ with
\[
\dom (d_{k,\min}) \subseteq \dom (D_k) \subseteq \dom (d_{k,\max}),
\] 
which combine into a Hilbert complex in the sense of 
\cite{BruLes:KHT}. Ideal boundary conditions for the de Rham complex 
induce a self-adjoint extension for each $\Delta_k$. The two special 
cases are the relative and absolute boundary conditions
\begin{equation}
\begin{split}
\Delta_{k, \textup{rel}} &= 
d^*_{k,\min}d_{k,\min} + d_{k-1,\min}d^*_{k-1,\min}, \\
\Delta_{k, \textup{abs}} &= 
d^*_{k,\max}d_{k,\max} + d_{k-1,\max}d^*_{k-1,\max}. \\
\end{split}
\end{equation}
We recall from \cite{Che:SGO} and \cite[Theorem 3.8]{BruLes:KHT}
the following facts. Ideal boundary conditions are unique in degrees 
$k\neq (m-1)/2$, 
$m=\dim M$. If we assume the Witt-condition $H^p(N,E_N)=0$,
$p=(m-1)/2$, then ideal boundary conditions are unique also in degree $p$. 

However, the Witt condition which implies uniqueness of the ideal boundary 
conditions, 
does not imply essential self-adjointness of $\Delta_k$ and the Laplacian 
corresponding to 
the (unique) ideal boundary conditions need not be given by the Frederich's 
extension in degree 
$k=(m\pm 1)/2$. In fact, by \cite[Corollary 3.5]{BruLes:KHT} the extensions 
$\Delta_{k, \textup{rel}}$ and $\Delta_{k, \textup{abs}}$
are given by the Friedrich extension in any degree if and only if the 
Gau\ss-Bonnet operator is essentially self-adjoit. This can always be 
achieved by scaling 
$g^N(0)$ to $c^2g^N(0)$ with $c>0$ sufficiently small.

Choose either the relative or the absolute self-adjoint extension of 
$\Delta_k$, which we denote again by the same letter for the moment.
It is known that {$\Delta_{k}$ has pure point spectrum \cite{Les:OOF}
and by \cite[Corollary 2.4.3]{Les:OOF} it follows that for every $t>0$,
$\exp(-t\Delta_{k})$ is a trace class operator.
Moreover we have the following theorem.
\begin{thm}\label{heat-trace}
Let $(M,g)$ be a compact odd-dimensional Riemannian manifold of dimension $m$
with a
conical singularity.  Let $(E,\nabla, h^E)$ be a flat Hermitian vector bundle 
over $M$ and $(E_N,\nabla_N,h_N)$ the associated bundle over $N$.
If $g^N(x)$ is not constant in $x\in [0,1]$,  
we assume essential self-adjointness of the Gau\ss-Bonnet operator. Then for 
the absolute or relative self-adjoint extension 
$\Delta_k$ of the Laplacian on $\Omega^k_c(M,E)$, $k=0,...,m,$ we have
\begin{align}\label{heatasymp}
\textup{Tr}\, \left(e^{-t\Delta_{k}} \right)\sim_{t\to 0+} 
\sum_{j=0}^{\infty}A_kt^{j-\frac{m}{2}}
+\sum_{j=0}^{\infty}C_jt^{\frac{j}{2}}+\sum_{j=1}^{\infty}G_jt^{\frac{j}{2}}\log t.
\end{align}
\end{thm}
For $g^N\equiv g^N(0)$ the result is well-known (see \cite{Che:SGO}). 
Otherwise, 
asymptotic expansions with $G_0=0$ are known only for the Frederich's extension 
$\Delta^{\mathscr{F}}$ (see \cite{Moo:HKA}) and essential self-adjointness of 
the Gau\ss-Bonnet operator is imposed above to guarantee 
$\Delta=\Delta^{\mathscr{F}}$.

\begin{remark}
In fact it suffices to impose the Witt condition $H^p(N,E_N)=0$,
$p=(m-1)/2$ instead of essential self-adjointness of the 
Gau\ss-Bonnet operator for Theorem \ref{heat-trace} to hold in case
$g^N(x)$ is not constant in $x\in [0,1]$. Under the 
Witt condition the heat kernel of $\Delta_k$ is still polyhomogeneous 
on a certain parabolic blowup of $\R^+\times M^2$ (see \cite{Moo:HKA})
and the heat trace expansion follows by the Push-forward theorem. We confine 
ourselves to the case of essentially self-adjoint Gau\ss-Bonnet operator
for simplicity.
\end{remark}

Let $\lambda_0\le \lambda_1\le\cdots $ denote the eigenvalues of $\Delta_k$. 
It follows from \eqref{heatasymp} that Weyl's law holds for the counting
function of the eigenvalues. This implies that the zeta function
\[
\zeta(s;\Delta_k):=\sum_{\lambda_j>0}\lambda_j^{-s}
\]
converges in the half-plane $\Re(s)>m/2$ and it can be expressed in terms of 
the trace of the heat operator by
\begin{align}
\zeta(s, \Delta_k)=\frac{1}{\Gamma(s)}\int_0^{\infty}t^{s-1}
\textup{Tr}(e^{-t\Delta_k}-P_k) \;dt, \quad \textup{Re}(s)>m/2,
\end{align}
where $P_k$ denotes the orthogonal projection 
of $L^2\Omega^k(M,E)$ onto the subspace $\mathscr{H}^k_{(2)}(M,E)$ of harmonic 
$k$-forms. 
Then the asymptotic expansion \eqref{heatasymp} yields the meromorphic 
extension of the right hand side and hence, of the zeta function, to the 
whole complex plane. Furthermore it also follows from \eqref{heatasymp} that
$\zeta (s, \Delta_k)$ is regular at $s=0$.
We can now define the analytic torsion of $(M,g^M)$ and $(E,\nabla, h^E)$ as
in the compact case. Let $\Delta_{k,\textup{abs}}$ (resp. 
$\Delta_{k,\textup{rel}}$) denote the self-adjoint extension of $\Delta_k$ with
respect to the choice of absolute (resp. relative) boundary conditions. Recall
that boundary conditions need to be imposed only if $k=(m\pm 1)/2$. Then we define
\begin{align}\label{analtor}
\log T_{\textup{abs}}(M,E;g^M):=\frac{1}{2} \sum_{k=0}^m(-1)^k\cdot k \cdot 
\frac{d}{ds}\zeta(s,\Delta_{k,\textup{abs}})\big|_{s=0},
\end{align}
and define $T_{\textup{rel}}(M,E;g^M)$ similarly. 
Since $h^E$ is the canonical metric on $E$, induced by the unitary 
representation, we suppress it from the notation. Observe that 
$\ast \Delta_{k,\textup{abs}}=\Delta_{m-k,\textup{rel}}\ast$. 
Denote the Laplacians on closed or coclosed differential forms 
by an additional subscript $cl$ or $ccl$, respectively.

Since the dimension of $M$ is odd, it follows by Poincar\'e duality that
\begin{equation}\label{poincare}
\begin{split}
T_{\textup{\textup{abs}}}(M,E;g^M)
&=\sum_{k=0}^{m-1} (-1)^{k+1}\frac{d}{ds}\zeta(s,\Delta_{k,ccl,\textup{abs}})
\big|_{s=0} \\
&=\sum_{k=0}^{m-1} (-1)^{k+1}\frac{d}{ds}\zeta(s,\Delta_{m-k,cl,\textup{rel}})
\big|_{s=0} \\
&=\sum_{k=0}^{m-1} (-1)^{k+1}\frac{d}{ds}\zeta(s,\Delta_{k,ccl,\textup{rel}})\big|_{s=0}
=T_{\textup{rel}}(M,E;g^M).
\end{split}
\end{equation}
For the second equality we have used that $\ast\Delta_{k,ccl,\textup{abs}}=
\Delta_{m-k,cl,\textup{rel}}\ast$.  The third equality follows from the fact 
that for an eigenvalue $\lambda\neq0$, $d$ is an isomorphism between the 
$\lambda$-eigenspace 
of the Laplacian on co-closed forms, satisfying relative boundary conditions,
 and the $\lambda$-eigenspace of the Laplacian on closed forms, satisfying  
relative boundary conditions. So we will denote the torsion 
 simply by $T(M,E;g^M)$. 

\begin{remark}
Dar \cite{Dar:IRT} also discusses the analytic torsion of a manifold with
conical singularities, where the  cross section $N$ is allowed to have a 
non-empty boundary. In this case the zeta-functions may have a pole at $s=0$.
However, due to residue cancellations, the analytic torsion is still
well-defined.  If $\partial N=\emptyset$, the case treated in the present 
paper, the zeta-functions are regular at zero individually. 
\end{remark}

For a finite-dimensional vector space $F$, set
$\det F:=\Lambda^{\textup{max}}F$ and denote by $(\det F)^{-1}$ the dual line
$\det F^*$. Let $\mathscr{H}^*_{(2)}(M,E)$ be the space of 
square integrable harmonic forms $\mathscr{H}^*_{(2)}(M,E)$ which satisfy
absolute boundary conditions.
The determinant line  of $(M,E)$ is defined as
\begin{equation}\label{det-line}
\det \mathscr{H}^*_{(2)}(M,E) :=\bigotimes_{k=0}^{m}
\left[\det \mathscr {H}^k_{(2)}(M,E)\right]^{(-1)^{k+1}}.
\end{equation}
 
\begin{defn}\label{t-norm-def}
The Ray-Singer metric $\|\cdot \|^{RS}_{(M,E;g^M)}$  on 
$\det\mathscr {H}^k_{(2)}(M,E) $ is defined by
\begin{equation}
\|\cdot \|^{RS}_{(M,E;g^M)}:=T(M,E;g^M)\cdot 
\|\cdot \|_{\det \mathscr {H}^k_{(2)}(M,E)},
\end{equation}
where $\|\cdot \|_{\det\mathscr {H}^k_{(2)}(M,E)}$ is the norm on 
$\det \mathscr {H}^k_{(2)}(M,E)$ induced by the $L^2$-norm on the space of 
harmonic forms $\mathscr{H}^*_{(2)}(M,E)$.
\end{defn}

Using methods developed by Melrose \cite{Mel:TAP}, we study the variation of
the analytic torsion with respect to the variation of the metric $g^M$.
Our main result is the invariance of analytic torsion of manifolds with 
isolated conical singularities under higher order deformations of the metric 
near the conical tip, assuming essential self-adjointness of the Gau\ss-Bonnet operator.
More precisely, we have
\begin{thm}\label{main1}
Let $(M^m,g^M)$ be a compact odd-dimensional Riemannian manifold with an 
isolated conical singularity, 
and a neighborhood $\mathscr{U}\cong (0,1)\times N$ of the singularity with
\begin{align*}
g^M\restriction \mathscr{U}=dx^2\oplus x^2g^N(x), \ g^N\in C^{\infty}
\left([0,1], \textup{Sym}^2(T^*N)\right).
\end{align*}
Let $(E,\nabla, h^E)$ be a flat Hermitian vector bundle and 
$(E_N,\nabla_N,h_N)$ its restriction over  $N$.
Assume essential self-adjointness of the Gau\ss-Bonnet operator. 
Then the Ray-Singer metric $\|\cdot \|^{RS}_{(M,E;g^M)}$ is 
invariant under all deformations of $g^M$ that fix $g^N(0)$.
\end{thm}

%%%%%%%%%%%%%%%%%%%%%%%%%%%%%%%%%%
\subsection{Analytic Torsion of a bounded Cone}
%%%%%%%%%%%%%%%%%%%%%%%%%%%%%%%%%%

It follows from Theorem \ref{main1} that for spaces with isolated conical 
singularities with appropriately rescaled $g^N \equiv g^N(0)$, so that the Gau\ss-Bonnet 
operator is essentially self-adjoint, we may assume that
the neighborhood of the singularity is a bounded  exact cone $C(N)$, i.e., 
\begin{equation}\label{excone}
C(N)=(0,1]\times N, \quad g=dx^2+x^2 g^N, \ g^N \in \textup{Sym}^2(T^*N).
\end{equation}
Now recall that for a 
compact manifold there is a splitting formula for the analytic torsion
\cite{Vis:GRS}. It is very likely that the splitting formula continuous to
hold for compact manifolds with conical singularities. Assuming this, the
study of the analytic torsion for such manifolds can be reduced to the study
of analytic torsion of a bounded cone. In \cite{Ver:ATO}, the second author
has derived a formula which expresses the analytic torsion of a bounded cone
in terms of spectral and topological data of the cross section. The computation 
uses the double-summation 
method developed by Spreafico in \cite{Spr:ZFA}, \cite{Spr:ZIF}, and a 
symmetry observation by 
Lesch \cite{Les:TAT}. We recall the result in odd dimensions. 

\begin{thm}\label{BV-Theorem}\textup{(Vertman, \cite{Ver:ATO})}
Let $(C(N),g)$ be a bounded
cone over a closed even-dimensional oriented Riemannian manifold 
$(N^n,g^N)$. Let $(E,\nabla, h^E)$ be a flat Hermitian vector bundle over 
$C(N)$and $(E_N,\nabla_N,h_N)$ the associated flat Hermitian vector bundle over
$N$. 
Let $b_k=\dim H^k(N,E_N)$, $k=0,\dots,n$, and $\chi(N,E_N)$ the 
Euler characteristic of $(N,E_N)$. Denote by $\Delta_{k,ccl,N}$ the 
Laplacian on coclosed $E_N$-valued $k$-forms on $N$ and define
\begin{align*}
&\A_k:=\frac{(n-1)}{2}-k, \quad 
\nu(\eta)=\sqrt{\eta + \A_k^2}, \ \textup{for} \ \eta \in E_k=\textup{Spec}\Delta_{k,ccl,N}\backslash \{0\}, \\
&\zeta_{k,N}(s)=\sum_{\eta \in E_k} \textup{m}(\eta) \ \nu(\eta)^{-s},\quad \zeta_{k,N}(s, \pm \A_k):=
\sum_{\eta\in E_k} \textup{m}(\eta) \ (\nu(\eta)\pm \A_k)^{-s}, \quad Re(s)\gg0,
\end{align*}
where $\textup{m}(\eta)$ denotes the multiplicity of $\eta \in E_k$.
Then the logarithm of the analytic torsion of $C(N)$ 
is given by a sum of three terms:
\[
\log T(C(N), E; g)= \textup{Top}(N,E_N;g^N)+ 
\textup{Tors}(N,E_N;g^N)+ \textup{Res}(N,E_N;g^N),
\]
where the first term is a combination of Betti numbers 
\begin{equation}\label{topol}
\begin{split}
\textup{Top}(N,E_N;g^N)= &\frac{\log 2}{2}\chi(N,E_N)\\
&-\sum_{k=0}^{\frac{n}{2}-1}(-1)^kb_k
\left( \frac{1}{2}\log (n-2k+1) + \sum_{l=0}^{\frac{n}{2}-k-1}
\log (2l+1)\right).
\end{split} 
\end{equation}
The second term, which is a torsion-like term, is given by 
\begin{align}\label{torlike}
 \textup{Tors}(N,E_N;g^N)=\frac{1}{2}\sum_{k=0}^{n-1}(-1)^k \zeta_{k,N}'(0,\A_k).
\end{align}
The third term, the residual term, is a combination of residues 
of $\zeta_{k,N}(s)$: 
\begin{align}\label{stern}
\textup{Res}(N,E_N;g^N)= \sum_{k=0}^{n/2-1}\frac{(-1)^k}{4}
\sum_{r=1}^{n/2}\underset{s=2r}{\textup{Res}}\, \zeta_{k,N}(s)
\sum_{b=0}^{2r}A_{r,b}(\A_k)\frac{\Gamma'(b+r)}{\Gamma (b+r)},
\end{align} 
where the coefficients $A_{r,b}(\A_k)$ are determined by certain 
recursive formulas, associated to combinations of special functions. 
\end{thm}

\begin{remark}
The computations in \cite{Ver:ATO} are only done in the case of the trivial 
bundle $E$. However, using \eqref{flatbd}, everything in \cite{Ver:ATO} 
can be extended to the twisted case without difficulties. 
\end{remark}

Our next result identifies $\textup{Res}(N,E_N; g^N)$ with the metric anomaly of
the boundary $\{1\}\times N$ of $(C(N),g)$. The metric anomaly has been studied
in \cite{BruMa:AAF}. We recall the basic facts. 
Let $(X,g^X)$ be an oriented compact Riemannian manifold of odd dimensions, 
with boundary $\partial X$. 
Let $(E,\nabla^E,h^E)$ be flat Hermitian vector bundle over $X$.  
Let $\nabla^{TX}$ be the Levi-Civita connection  associated to $g^X$. 
Br\"uning and Ma define a secondary class $B(\nabla^{TX})\in 
\Omega^*(\partial X,E|_{\partial X})$  
(see \cite[(1.19)]{BruMa:AAF}) which  describes the metric anomaly in the
following sense. Let $g^X_i$, $i=1,2$, be two  Riemannian 
metrics  on $X$  and let $\nabla^{TX}_i$  denote the corresponding Levi-Civita 
connections. Denote by $\|\cdot \|^{RS}_{(X,E;g^X_i)}$, $i=1,2$ the Ray-Singer 
metrics on $\det H^*(X,E)$. Then 
\begin{align}\label{BM-thm}
\log \left(\frac{\|\cdot \|^{RS}_{(X,E;g^X_1)}}{\|\cdot \|^{RS}_{(X,E;g^X_2)}}
\right)=\frac{\textup{rank}(E)}{2}
\left[\int_{\partial X}B(\nabla^{TX}_2)-\int_{\partial X}B(\nabla^{TX}_1)\right].
\end{align}

Now we can state our next result.
\begin{thm}\label{main2}
Let $(C(N),g)$ be the cone over a closed oriented Riemannian 
manifold $(N^n,g^N)$ of even dimension. 
Let $(E,\nabla, h^E)$ be a flat  Hermitian vector bundle over $C(N)$
and $(E_N,\nabla_N,h_N)$ the associated flat vector bundle over $N$. 
Let $\nabla^{TC(N)}$ be the Levi-Civita connection of $g$. 
Then the residual term of Theorem \ref{BV-Theorem} equals
\begin{align*}
\textup{Res}(N,E_N; g^N)=-\frac{\textup{rank}(E)}{2}\int _{N}B(\nabla^{TC(N)}),
\end{align*}
where $N$ is identified with $\{1\}\times N$.
\end{thm}

We note that De Melo, Hartmann and Spreafico \cite{HarSpr:TAT} 
and \cite{MHS:RTA} computed the analytic torsion in the special 
case of a cone  over the sphere $S^n$ and verified that
 in even dimensions the residual term $\textup{Res}(N,E_N; g^N)$ 
equals the anomaly term by direct comparison. 

\begin{remark}
Consider the example of a cone over the $n$-dimensional flat torus $T^n$ 
and assume that $E$ is the trivial line bundle. Then from \eqref{stern} 
one finds that the integral of $B(\nabla^{TC(T^n)})$ is given in terms of residues of the 
shifted zeta function $\zeta_{k,N}(s)$. These are rational multiples of 
$(\pi)^{-n/2}\textup{Vol}(T^n)$. 
This agrees with the explicit formula in \cite[(4.43)]{BruMa:AAF}.
\end{remark}

\begin{remark}
In general,  the metric anomaly in \cite{BruMa:AAF} is expressed in terms of 
the curvature tensor,
whereas our result provides an expression in terms of residues of the shifted 
zeta function $\zeta_{k,N}(s)$. The residues can be computed in terms of the 
coefficients of the asymptotic expansion of the trace of the heat operator
of the Laplacian of $N$ on forms. It is well-known that these coefficients 
are given by the integral of universal polynomials in the  covariant 
derivatives of the curvature tensor.
\end{remark}

An immediate corollary is the identification of the Ray-Singer metric for
a bounded generalized cone with exact 
cone metric near the singularity and product metric near the 
regular boundary.

\begin{cor}\label{main3}
Let $(C(N),g)$ be a cone over a closed oriented Riemannian manifold 
$(N^n,g^N)$ of even dimension. Let $(E,\nabla, h^E)$ be a flat complex 
Hermitian vector bundle over $C(N)$ and let $(E_N,\nabla_N,h_N)$ be
the associated flat vector bundle over $N$. 
Let $g_0$ be a metric on $C(N)$ which coincides 
with $dx^2+x^2 g^N$ near the singularity at $x=0$ and with the product metric 
$dx^2\oplus g^N$ near the boundary $\{1\}\times N$. Then
\begin{equation*}
\log \left(\frac{\|\cdot \|^{RS}_{(C(N),E, g_0)}}{\|\cdot \|_{\det H^*(C(N),E)}}
     \right)= \textup{Top}(N,E_N;g^N)+\textup{Tors}(N,E_N;g^N),
\end{equation*}
where $\textup{Top}(N,E_N;g^N)$ and $\textup{Tors}(N,E_N;g^N)$ are given by 
\eqref{topol} and 
\eqref{torlike}, respectively.
\end{cor}

Corollary \ref{main3} shows that the main contribution of the 
conical singularity to the analytic torsion is given by
 $\textup{Tors}(N,E_N; g^N)$. This result will be of significance in regard of the
splitting formula.

Next we study its
asymptotic behavior  under scaling of the metric of $N$. 

\begin{prop}\label{main4}
Let $(N,g^N)$ be a closed oriented Riemannian manifold and let
$g^N_{\mu}:=\mu^{-2}g^N, \mu>0$. Then
\begin{align}
\textup{Tors}(N,E_N;g^N_{\mu})= 
O\left(\frac{\log \mu}{\mu}\right), \quad \mu \to \infty.
\end{align}
\end{prop}

%%%%%%%%%%%%%%%%%%
\section{Variation of Analytic Torsion under Higher Order 
Deformations}
\label{section-variation}
%%%%%%%%%%%%%%%%%%

In this section we recall some facts about the asymptotic properties of the 
heat kernel on a manifold with conical singularities. Then we will show that
the Ray-Singer metric is invariant under higher order deformations of the 
metric near the tip of the cone.

Let $(M,g^M)$ be an odd-dimensional compact Riemannian manifold with an 
isolated conical singularity with $M=\cCN\cup_N X$ and metric on $\cCN$ given
by \eqref{metric1}.
Let $(E,\nabla, h^E)$ be a flat Hermitian vector bundle over $M$
and let $(E_N,\nabla_N,h_N)$ be the flat Hermitian vector bundle over $N$
associated to $E$ via \eqref{flatbd}. 
 
Let $(\Omega^*(\mathscr{U},E),d_*)$ be the twisted de Rham complex over 
$\mathscr{U}$. 
In this section we assume essential self-adjointness of the Gau\ss-Bonnet operator,
so that the Laplacian $\Delta_k$ defined with respect to relative or absolute 
boundary conditions coincides with the Frederich's extension $\Delta_k^{\mathscr{F}}$ in all degrees
$k=0,\dots,m$. Essential self-adjointness of the Gau\ss-Bonnet operator can 
always be achieved by scaling $g^N(0)$ to $c^2g^N(0)$ with $c>0$ sufficiently 
small, see \cite{BruLes:KHT}. 

\begin{remark}
The variational result of this section holds in fact for any 
self-adjoint extension of the Laplacian satisfying Theorem 
\ref{friedrichs-blowup}, which does hold for the Frederich's extension 
$\Delta^{\mathscr{F}}$. However it suffices to impose the Witt condition $H^p(N,E_N)=0$,
$p=(m-1)/2$ instead of essential self-adjointness of the 
Gau\ss-Bonnet operator for Theorem \ref{friedrichs-blowup} to hold for 
the relative and absolute self-adjoint extension. We confine 
ourselves here to the case of essentially self-adjoint Gau\ss-Bonnet operator
for simplicity.
\end{remark}

We use the unitary rescaling transformation over 
$\mathscr{U}$, introduced in \cite[(5.2)]{BruSee:AIT}
\begin{equation}
\label{separation2}
\begin{split}
\Psi_k : C_c^{\infty}((0,1),\Omega^{k-1}(N,E_N)\oplus \Omega^k(N,E_N))\to 
\Omega_c^k(\mathscr{U},E) \\
(\phi_{k-1},\phi_k)\mapsto x^{k-1-n/2}\phi_{k-1}\wedge dx + x^{k-n/2}\phi_k, 
\end{split}
\end{equation}
where $\phi_k,\phi_{k-1}$ are identified with their pullback to $\mathscr{U}$ under the projection 
$\pi: \mathscr{U}\cong (0,1)\times N\to N$ onto the second factor, and $x\in (0,1)$ is the radial coordinate.

\begin{prop}\label{unitary} 
\textup{\cite[(5.4)]{BruSee:AIT}}
The rescaling map $\Phi_k$ extends to a unitary transformation
\begin{align*}
L^2((0,1), L^2(\Omega^{k-1}(N,E_N)\oplus \Omega^k(N,E_N), g^N(x), h_N)) \rightarrow 
L^2(\Omega^k(\mathscr{U},E), g^M|_{\mathscr{U}}, h^E|_{\mathscr{U}}). 
\end{align*}
\end{prop}
Throughout this paper we will use  the unitary transformation 
\eqref{separation2} and 
denote the heat operator under the unitary transformation again 
by $e^{-t\Delta_k}$. 
By a small abuse of notation we denote the heat operator and the 
corresponding heat kernel both by $e^{-t\Delta_k}$.

%%%%%%%%%%%%%%%%%%%%%%%
\subsection{The Heat Kernel on Manifolds with Conical Singularities}
%%%%%%%%%%%%%%%%%%%%%%%

In order to describe the asymptotic properties of the heat kernel 
$e^{-t\Delta_k}$
accurately, we consider $M$ as the interior of a compact 
manifold $\overline{M}$ with boundary.

The heat kernel of $e^{-t\Delta_k}$ is a priori a $k$-form valued distribution 
on 
a manifold with corners $M^2_h=\R^+\times \overline{M}^2$. Its singular 
structure can be conveniently described by lifting $e^{-t\Delta_k}$ to the heat 
space $\mathscr{M}^2_h$, which is a resolution 
of $M^2_h$ obtained by blowing up certain submanifolds of its boundary, 
see (\cite{Moo:HKA}). We 
begin by a blowup of 
\begin{align}
A = \{t=0\}\times (\partial \overline{M})^2 \subset M^2_h,
\end{align}
where the $\R^+-$direction is scaled parabolically, according to the parabolic 
homogeneity of the problem. The resulting space 
$[M^2_h, A]$ is defined as the union of $(M^2_h\backslash A)$ with the 
interior spherical normal bundle of $A$ in $M^2_h$.
The blowup $[M^2_h, A]$ is endowed with the unique minimal differential 
structure, including $\sqrt{t}$, smooth functions 
in the interior of $M^2_h$ and polar coordinates around $A$. The blowup 
introduces an additional boundary hypersurface, 
which we refer to as the \emph{front face} ff. We refer to \cite{Mel:TAP} for 
a careful definition and general discussion of parabolic blowups.  

The actual heat-space $\mathscr{M}^2_h$ is obtained by blowing up $[M^2_h, A]$ 
at the lift of the diagonal in $\overline{M}^2$ 
at $t=0$, again parabolically in the time $\R^+-$direction. This blowup 
introduces a boundary hypersurface, which we refer to 
as the \emph{temporal diagonal} td. The heat space comes with a canonical 
blowdown map $\beta:\mathscr{M}^2_h\to M^2_h$ 
and may be visualized as follows.

\begin{figure}[h]
\begin{center}
\begin{tikzpicture}
\draw (0,0.7) -- (0,2);
\draw (-0.7,-0.5) -- (-2,-1);
\draw (0.7,-0.5) -- (2,-1);

\draw (0,0.7) .. controls (-0.5,0.6) and (-0.7,0) .. (-0.7,-0.5);
\draw (0,0.7) .. controls (0.5,0.6) and (0.7,0) .. (0.7,-0.5);
\draw (-0.7,-0.5) .. controls (-0.5,-0.6) and (-0.4,-0.7) .. (-0.3,-0.7);
\draw (0.7,-0.5) .. controls (0.5,-0.6) and (0.4,-0.7) .. (0.3,-0.7);

\draw (-0.3,-0.7) .. controls (-0.3,-0.3) and (0.3,-0.3) .. (0.3,-0.7);
\draw (-0.3,-1.4) .. controls (-0.3,-1) and (0.3,-1) .. (0.3,-1.4);

\draw (0.3,-0.7) -- (0.3,-1.4);
\draw (-0.3,-0.7) -- (-0.3,-1.4);

\node at (1.2,0.7) {\large{rf}};
\node at (-1.2,0.7) {\large{lf}};
\node at (1.1, -1.2) {\large{tf}};
\node at (-1.1, -1.2) {\large{tf}};
\node at (0, -1.7) {\large{td}};
\node at (0,0.1) {\large{ff}};
\end{tikzpicture}
\end{center}
\label{heat-incomplete}
\caption{Heat-space Blowup $\mathscr{M}^2_h$.}
\end{figure}

We fix coordinate chart $(x,z)$ in the singular neighborhood $\mathscr{U}\subset M$. 
Two copies of this chart together with the time coordinate yield a coordinate system $(t, x, z, \wx, \wz)$ on $M^2_h$. 
The projective coordinates on $\mathscr{M}^2_h$ are then given as follows. 
Near the top corner of the front face ff, the projective coordinates are given by
\begin{align}\label{top-coord}
\rho=\sqrt{t}, \  \xi=x / \rho, \ \widetilde{\xi}=\wx / \rho, \ z, \ \widetilde{z},
\end{align}
where in these coordinates $\rho, \xi, \widetilde{\xi}$ are the defining functions of the boundary faces ff, lf and rf respectively. 
For the right hand side bottom corner of the front face projective coordinates are given by
\begin{align}\label{right-coord}
\tau=t / \wx^2, \ s=x / \wx, \ z, \ \wx, \ \widetilde{z},
\end{align}
where in these coordinates $\tau, s, \widetilde{x}$ are the defining functions of tf, rf and ff respectively. 
For the left hand side bottom corner of the front face projective coordinates are obtained by interchanging 
the roles of $x$ and $\widetilde{x}$. Projective coordinates on $\mathscr{M}^2_h$ near temporal diagonal are given by 
\begin{align}\label{d-coord}
\eta=\frac{\sqrt{t}}{\wx}, \ S =\frac{(x-\wx)}{\sqrt{t}}, \ Z =\frac{\wx (z-\wz)}{\sqrt{t}}, \  \wx, \ \widetilde{z}.
\end{align}
In these coordinates tf is the face in the limit $|(S, Z)|\to \infty$, ff and td are defined by $\widetilde{x}, \eta$, respectively. 
The blow-down map $\beta: \mathscr{M}^2_h\to M^2_h$ is in local coordinates simply the coordinate change back to 
$(t, (x,z), (\widetilde{x},\widetilde{z}))$. The heat kernel lifts to a 
polyhomogeneous conormal distribution on $\mathscr{M}^2_h$
with product type expansions at the corners of the heat space, and with 
coefficients depending smoothly on the tangential variables.
More precisely we have the following definition.

\begin{defn}\label{phg}
Let $\mathfrak{W}$ be a manifold with corners, with all boundary faces embedded, and $\{(H_i,\rho_i)\}_{i=1}^N$ an enumeration 
of its boundaries and the corresponding defining functions. For any multi-index $b= (b_1,
\ldots, b_N)\in \C^N$ we write $\rho^b = \rho_1^{b_1} \ldots \rho_N^{b_N}$.  Denote by $\mathcal{V}_b(\mathfrak{W})$ the space
of smooth vector fields on $\mathfrak{W}$ which lie
tangent to all boundary faces. A distribution $\w$ on $\mathfrak{W}$ is said 
to be conormal,
if $w\in \rho^b L^\infty(\mathfrak{W})$ for some $b\in \C^N$ and $V_1 \ldots V_\ell u \in \rho^b L^\infty(\mathfrak{W})$
for all $V_j \in \mathcal{V}_b(\mathfrak{W})$ and for every $\ell \geq 0$. An index set 
$E_i = \{(\gamma,p)\} \subset {\mathbb C} \times {\mathbb N}$ 
satisfies the following hypotheses:
\begin{enumerate}
\item $\textup{Re}(\gamma)$ accumulates only at plus infinity,
\item For each $\gamma$ there is $P_{\gamma}\in \N_0$, such 
that $(\gamma,p)\in E_i$ for $p \leq P_\gamma < \infty$,
\item If $(\gamma,p) \in E_i$, then $(\gamma+j,p') \in E_i$ for all $j \in {\mathbb N}$ and $0 \leq p' \leq p$. 
\end{enumerate}
An index family $E = (E_1, \ldots, E_N)$ is an $N$-tuple of index sets. 
Finally, we say that a conormal distribution $w$ is polyhomogeneous on 
$\mathfrak{W}$ 
with index family $E$ (denoted by 
$\w\in \mathscr{A}_{\textup{phg}}^E(\mathfrak{W})$), 
if $w$ is conormal and if in addition, near each $H_i$, 
\[
w \sim \sum_{(\gamma,p) \in E_i} a_{\gamma,p} \rho_i^{\gamma} (\log \rho_i)^p, \ 
\textup{as} \ \rho_i\to 0,
\]
with coefficients $a_{\gamma,p}$ conormal on $H_i$, polyhomogeneous with index $E_j$
at any $H_i\cap H_j$. 
\end{defn}

The asymptotic properties of the lift $\beta^*e^{-t\Delta_k}$ of the heat kernel to $\mathscr{M}^2_h$ 
in an open neighborhood of the front face have been described by Mooers in \cite{Moo:HKA}.

\begin{thm}\label{friedrichs-blowup}\textup{(\cite{Moo:HKA})}
The lift $\beta^*e^{-t\Delta_k}$ is polyhomogeneous conormal distribution on $\mathscr{M}^2_h$ 
of leading order $(-1)$ at the front face ff, leading order $(-\dim M)$ at the temporal diagonal td, and of 
leading order $1/2$ at the left and right boundary faces. $\beta^*e^{-t\Delta_k}$ vanishes to infinite order 
at the temporal face tf and does not admit logarithmic terms in its asymptotic expansion at ff and td. 
\end{thm}

The asymptotic properties of the heat kernel $e^{-t\Delta_k}$ has been discussed by various authors, 
with major contributions by Br\"uning-Seeley in \cite{BruSee:RSA}, Lesch \cite{Les:OOF} and Mooers \cite{Moo:HKA}. 
The heat calculus on manifolds with edges by Mazzeo-Vertman \cite{MazVer:ATO} contains the setup of isolated 
conical singularities as a special case.

%%%%%%%%%%%%%%%
\subsection{Analytic Torsion under Metric Variation at the Cone Tip}
%%%%%%%%%%%%%%%

We now study analytic torsion of $(M,g^M)$ under higher order deformations of the Riemannian metric $g^M$. 
Let $\mathscr{I}\subset \R$ be an open interval and let $(g^M_{\mu}, \mu \in \mathscr{I})$ be a smooth family of 
Riemannian metrics on $M$ with
\begin{align}
g^M_{\mu}\restriction \mathscr{U}=dx^2 \oplus x^2g^N(x,\mu), \ 
g^N \in C^{\infty}\left([0,1]\times \mathscr{I}, \textup{Sym}^2(T^*N)\right).
\end{align}
The dependence of $\Delta_k$ on the parameter is denoted by $\Delta_k(\mu)$. 
For $\mu\neq \mu_0$
the corresponding operators $\Delta_k(\mu), \Delta_k(\mu_0)$ act on different 
Hilbert spaces $L^2\Omega^k(M,g^M_{\mu})$ 
and $L^2\Omega^k(M,g^M_{\mu_0})$, respectively. We fix some 
$\mu_0 \in \mathscr{I}$ and employ the natural isometry 
$$
T_{\mu}:L^2\Omega^k(M,g^M_{\mu})\to L^2\Omega^k(M,g^M_{\mu_0}),
$$
to define self-adjoint operators in the fixed Hilbert space  $L^2\Omega^k(M,g^M_{\mu_0})$
$$
H_k(\mu):=T_{\mu} \circ \Delta_k(\mu) \circ T_{\mu}^{-1}.
$$ 
As a simple consequence of the semi-group property we have for any $\mu, \mu_0\in \mathscr{I}$
\begin{align*}
\frac{e^{-tH_k(\mu)}-e^{-tH_k(\mu_0)}}{\mu-\mu_0}&=
\int_0^t\frac{\partial}{\partial s}\left(\frac{e^{-(t-s)H_k(\mu_0)}e^{-sH_k(\mu)}}{\mu-\mu_0}\right)ds \\
&=\int_0^t e^{-(t-s)H_k(\mu_0)}\left(\frac{H_k(\mu_0)-H_k(\mu)}{\mu-\mu_0}\right)e^{-sH_k(\mu)}ds.
\end{align*}
Taking the limit $\mu \to \mu_0$ this leads to 
\begin{align*}
\left. \frac{\partial}{\partial \mu}\textup{Tr}\left(e^{-tH_k(\mu)}\right)\right|_{\mu=\mu_0}=
&-\int_0^t\textup{Tr}\left(e^{-(t-s)H_k(\mu_0)}\left(\dot{H}_k(\mu_0)\right) e^{-sH_k(\mu_0)}\right)\\
=& -t \cdot \textup{Tr}\left(\left(\dot{H}_k(\mu_0)\right)
 e^{-tH_k(\mu_0)}\right),
\end{align*}
where the upper-dot denotes the derivative with respect to $\mu$. 
Evaluating $\dot{H}_k(\mu)$ explicitly in terms of the isometry $T_{\mu}$, we find
\begin{equation}\label{trace-game}
\begin{split}
\left. \frac{\partial}{\partial \mu}\textup{Tr}\left(e^{-tH_k(\mu)}\right)\right|_{\mu=\mu_0}=
& -t \cdot \textup{Tr}\left(\dot{T}_{\mu_0}\circ \Delta_k(\mu_0)\circ e^{-t\Delta_k(\mu_0)}\circ T^{-1}_{\mu_0}\right) \\
& -t \cdot \textup{Tr}\left(\Delta_k(\mu_0)\circ \dot{T}^{-1}_{\mu_0}T_{\mu_0}\circ e^{-t\Delta_k(\mu_0)}\right) \\
& -t \cdot \textup{Tr}\left(\dot{\Delta}_k(\mu_0)\circ e^{-t\Delta_k(\mu_0)}\right).
\end{split}
\end{equation}
For the second summand in \eqref{trace-game} we employ commutativity of bounded operators under the trace and find
\begin{align*}
&\textup{Tr}\left(\Delta_k(\mu_0)\circ \dot{T}^{-1}_{\mu_0}T_{\mu_0}\circ e^{-t\Delta_k(\mu_0)}\right) 
= \textup{Tr}\left(e^{-t/2\Delta_k(\mu_0)}\Delta_k(\mu_0)\circ 
\dot{T}^{-1}_{\mu_0}T_{\mu_0}\circ e^{-t/2\Delta_k(\mu_0)}\right) \\
= \, &\textup{Tr}\left(\dot{T}^{-1}_{\mu_0}T_{\mu_0}\circ \Delta_k(\mu_0) 
\circ e^{-t\Delta_k(\mu_0)}\right) 
= - \textup{Tr}\left(T^{-1}_{\mu_0}\dot{T}_{\mu_0}\circ \Delta_k(\mu_0) 
\circ e^{-t\Delta_k(\mu_0)}\right). 
\end{align*}
Consequently the first and the second summands in \eqref{trace-game} cancel 
and we get
\begin{align}
\left. \frac{\partial}{\partial \mu}\textup{Tr}
\left(e^{-t\Delta_k(\mu)}\right)\right|_{\mu=\mu_0}=
 -t \cdot \textup{Tr}\left(\dot{\Delta}_k(\mu_0)\circ e^{-t\Delta_k(\mu_0)}\right).
\end{align}
Let $P_k(\mu), \mu \in \mathscr{I}$ denote the orthogonal projection onto the 
kernel of $\Delta_k(\mu)$. 
Let $*_{\mu}$ denote the Hodge-star operator associated to 
 $g^M_{\mu}, \mu \in \mathscr{I}$,  and put 
$\A^k_{\mu}:=*_{\mu}^{-1}\dot{*}_{\mu},$ with the upper-index $k$ denoting the 
restriction to forms of degree $k$.
Repeating the arguments of Ray-Singer in \cite[p. 152-153]{RaySin:RTA} we 
get
\begin{equation}\begin{split}\label{RS-identity}
&\frac{\partial }{\partial \mu}\sum_{k=0}^{\dim M}(-1)^k\cdot k \cdot 
\textup{Tr}\left(e^{-t\Delta_k(\mu)}-P_k(\mu)\right) \\
=t\, &\frac{\partial }{\partial t}\sum_{k=0}^{\dim M}(-1)^k
\textup{Tr}\left(\A_{\mu}^k \left(e^{-t\Delta_k(\mu)}-P_k(\mu)\right)\right)dt.
\end{split}\end{equation}
Put
\begin{align*}
f(\mu,s):= \frac{1}{2}\sum_{k=0}^{\dim M}(-1)^k\cdot k \cdot 
\frac{1}{\Gamma(s)}\int_0^{\infty}t^{s-1}
\textup{Tr}\left(e^{-t\Delta_k(\mu)}-P_k(\mu)\right)dt.
\end{align*} 
Then by definition we have
\begin{align}\label{t-f}
\log T(M, E, g^M_{\mu})=\left.\frac{\partial}{\partial s}\right|_{s=0}f(\mu, s).
\end{align}
Since the heat trace is exponentially decaying, we can differentiate 
$f(\mu,s)$ with respect to $\mu\in I$ in the half-plane $\textup{Re}(s)\gg 0$, 
by differentiating under the integral sign. Choosing $\textup{Re}(s)$
large enough so that boundary terms vanish and using \eqref{RS-identity} we
get
\begin{equation}\begin{split}\label{f}
\frac{\partial }{\partial\mu}f(\mu, s)&=\frac{1}{2}
\sum_{k=0}^{\dim M}(-1)^{k}\frac{1}{\Gamma(s)}
\int_0^{\infty}t^{s}\frac{d}{dt}\textup{Tr}\left(\A_{\mu}^k 
\left(e^{-t\Delta_k(\mu)}-P_k(\mu)\right)\right)dt\\
&=\frac{1}{2}\, s\sum_{k=0}^{\dim M}(-1)^{k+1}\frac{1}{\Gamma(s)}
\int_0^{\infty}t^{s-1}\textup{Tr}
\left(\A_{\mu}^k \left(e^{-t\Delta_k(\mu)}-P_k(\mu)\right)\right)dt.
\end{split}\end{equation}
Let us henceforth assume that the metric family $g^M_{\mu}$ differs only in 
its higher order terms on 
the singular neighborhood $\mathscr{U}$. More precisely, assume that
 for all $\mu\in \mathscr{I}$
\begin{align}\label{g-first-order}
g^N(0,\mu)\equiv g^N(0).
\end{align}
Under the transformation \eqref{separation2}, 
$\A^k_{\mu}|_{\mathscr{U}}\in C^{\infty}\left([0,1], 
\textup{End}\left(\Omega^{k-1}(N,E_N)\oplus \Omega^k(N,E_N)\right)\right)$ 
for any $\mu\in\mathscr{I}$, and by \eqref{g-first-order}, it follows that
\[
\A^k_{\mu}(x)=O(x),\quad x\to 0.
\]
Let $\pi:\R^+\times \overline{M}^2\to \overline{M}$ be the projection onto a 
copy of $\overline{M}$. 
Then, employing the projective coordinates on the heat space $\mathscr{M}^2_h$, 
the lift $(\pi \circ \beta)^*\A^k_{\mu}$ is a polyhomogeneous function on 
$\mathscr{M}^2_h$ with 
\begin{align}
(\pi \circ \beta)^*\A^k_{\mu} =\rho_{\textup{ff}}\rho_{\textup{rf}} \cdot 
\mathscr{B}
\end{align}
where $\rho_{\textup{ff}}$ and  $\rho_{\textup{rf}}$ are boundary defining 
functions 
of the front and the right boundary face, respectively; $\mathscr{B}$ is a 
polyhomogeneous bounded function on 
$\mathscr{M}^2_h$. If we compose $(\pi \circ \beta)^*\A^k_{\mu}$ 
pointwise with 
$\beta^*e^{-t\Delta_k(\mu)}$ and use Theorem \ref{friedrichs-blowup}, we obtain 
the following  lemma.

\begin{lemma}\label{aH}
The lift $\beta^*\left(\A^k_{\mu}e^{-t\Delta_k(\mu)}\right)$ is a polyhomogeneous 
conormal distribution 
on $\mathscr{M}^2_h$ of zero leading order at the front face ff, leading order 
$(-\dim M)$ at the temporal diagonal td, and of 
leading order $1/2$ and $3/2$ at the left and right boundary faces, 
respectively. $\beta^*\left(\A^k_{\mu}e^{-t\Delta_k(\mu)}\right)$ 
vanishes to infinite order at the temporal face tf and does not admit 
logarithmic terms in its asymptotic expansion at ff and td. 
\end{lemma}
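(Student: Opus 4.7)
The plan is to read off the singular structure of $\A^k_\mu e^{-t\Delta_k(\mu)}$ from that of $e^{-t\Delta_k(\mu)}$ by exploiting an extra factor of $x$ in $\A^k_\mu$ near the tip of the cone, then combining with Theorem \ref{friedrichs-blowup}.

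First I would verify that $\A^k_\mu(x) = O(x)$ as $x \to 0$. Under the unitary transformation $\Psi_k$ of \eqref{separation2}, the Hodge-star operator $*_\mu$ is realized as an endomorphism-valued function that is smooth on $[0,1]\times \mathscr{I}$, whose restriction to $x = 0$ depends only on $g^N(0,\mu)$. The hypothesis \eqref{g-first-order} that $g^N(0,\mu)$ is independent of $\mu$ therefore forces $\dot{*}_\mu |_{x=0} = 0$, so
\[
\A^k_\mu(x) \; = \; *_\mu^{-1}\dot{*}_\mu \; = \; x \cdot \widetilde{\A}^k_\mu(x),
\]
with $\widetilde{\A}^k_\mu$ smooth up to $x = 0$.

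Next I would lift $\A^k_\mu$ to the heat space $\mathscr{M}^2_h$ via $\pi \circ \beta$, where $\pi$ is chosen so that $\pi^* x$ is the variable on which $\A^k_\mu$ depends. The projective coordinates \eqref{top-coord} give $\pi^* x = \rho\widetilde{\xi} = \rho_{\textup{ff}}\rho_{\textup{rf}}$ near the top corner of ff, while \eqref{right-coord} give $\pi^* x = s\widetilde{x} = \rho_{\textup{rf}}\rho_{\textup{ff}}$ near the right bottom corner of ff; away from the singular neighborhood $\mathscr{U}$ the lift is just smooth and bounded. Combining,
\[
(\pi \circ \beta)^*\A^k_\mu \; = \; \rho_{\textup{ff}}\rho_{\textup{rf}} \cdot \mathscr{B},
\]
with $\mathscr{B}$ polyhomogeneous and bounded on $\mathscr{M}^2_h$ and admitting no logarithmic terms at any face, since $\widetilde{\A}^k_\mu$ is classically smooth on $\overline{M}$.

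The Schwartz kernel of $\A^k_\mu e^{-t\Delta_k(\mu)}$ is the pointwise product $(\pi\circ\beta)^*\A^k_\mu \cdot \beta^*e^{-t\Delta_k(\mu)}$. Pointwise multiplication of polyhomogeneous conormal distributions on a manifold with corners is polyhomogeneous, with index sets obtained by Minkowski addition at each boundary face. Applying Theorem \ref{friedrichs-blowup} together with the factorization of the previous step, the leading orders of $\beta^*(\A^k_\mu e^{-t\Delta_k(\mu)})$ become $0$ at ff (from $-1 + 1$), $3/2$ at rf (from $1/2 + 1$), $1/2$ at lf and $-\dim M$ at td (both unchanged, since $\mathscr{B}$ contributes order zero and is regular there), and vanishing to infinite order at tf. Absence of logarithmic terms at ff and td is preserved because neither factor produces any. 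I do not anticipate a serious obstacle: the key input is the vanishing $\A^k_\mu(x)=O(x)$ forced by \eqref{g-first-order}, and the remainder is an order count on the parabolic blowup.
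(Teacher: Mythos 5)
Your proposal is correct and follows essentially the same route as the paper: you identify the key vanishing $\A^k_\mu(x)=O(x)$ forced by \eqref{g-first-order}, lift to obtain $(\pi\circ\beta)^*\A^k_\mu=\rho_{\textup{ff}}\rho_{\textup{rf}}\cdot\mathscr{B}$ with $\mathscr{B}$ polyhomogeneous and bounded, and combine index sets with Theorem~\ref{friedrichs-blowup} by pointwise multiplication. The only quibble is a bookkeeping slip in the top-corner computation (you write $\pi^*x=\rho\widetilde{\xi}$, but \eqref{top-coord} gives $\rho\widetilde{\xi}=\widetilde{x}$, not $x$); this does not affect the conclusion, which matches the lemma.
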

For any fixed $t>0$
\begin{align}\label{point-trace}
\A^k_{\mu}e^{-t\Delta_k(\mu)}\in \wedge^k T^*M \boxtimes \wedge^k T^*M \cong (\wedge^k T^*M)^* \boxtimes \wedge^k T^*M.
\end{align}
The right hand side of \eqref{point-trace} admits on the diagonal an invariantly defined pointwise trace $\textup{tr}$, and 
$\textup{tr}\left(\A^k_{\mu}e^{-t\Delta_k(\mu)}\right)$ defines a polyhomogeneous conormal distribution on the lift $\mathscr{D}$ 
of the diagonal in $\overline{M}^2$ to the heat space $\mathscr{M}^2_h$. The lifted diagonal $\mathscr{D}$ is a resolution 
of $\R^+\times \overline{M}$, obtained by blowing up $\{0\}\times \partial \overline{M}$, with the $\R^+-$direction 
scaled parabolically; $\mathscr{D}$ may be visualized as in Figure \ref{lifted-diagonal}. 

\begin{figure}[h]
\begin{center}
\begin{tikzpicture}
\draw (0,0) -- (0,2);

\draw (0,0) .. controls (0.5,-0.5) and (1,-1) .. (1,-2);
\draw (0,0) .. controls (0.5,-0.1) and (1.2,-0.5) .. (1.6,-1.5);
\draw (0,0) .. controls (0.5,0) and (1.5,0) .. (2,-1.3);

\draw (1,-2) -- (3,-3);
\draw (2,-1.3) -- (4,-1.3);

\draw (1.7,-1.5) .. controls (1.5,-1.5) and (1.4,-1.6) .. (1.4,-1.8);
\draw (1.7,-1.5) -- (3.6,-1.9);
\draw (1.4,-1.8) -- (3.4,-2.3);

\draw (3.6,-1.9) .. controls (3.5,-1.9) and (3.4,-2.2) .. (3.4,-2.3);
\draw (3.6,-1.9) .. controls (3.7,-1.9) and (3.8,-2) .. (3.8,-2.1);

\draw (3.6,-1.9) -- (3.6,0.1);
\draw (3.6,0.1) -- (0,0.7);

\draw (1,-2) -- (1.4,-1.8);
\draw (2,-1.3) -- (1.8,-1.5);

\node at (4.5,-2) {\large{td}};
\node at (-0.5,1) {\large{lf}};
\node at (0.1,-1) {\large{ff}};
\node at (4,1) {$\mathscr{D}\subset \mathscr{M}^2_h$};

\end{tikzpicture}
\end{center}
\label{lifted-diagonal}
\caption{The lifted diagonal $\mathscr{D}$ of the heat-space $\mathscr{M}^2_h$.}
\end{figure}
By a small abuse of notation, 
we refer to the boundary faces of the lifted diagonal $\mathscr{D}\subset \mathscr{M}^2_h$ 
again as temporal diagonal td, front face ff and left face lf, respectively. The projective coordinates \eqref{top-coord} and \eqref{d-coord} on $\mathscr{M}^2_h$ 
yield projective coordinates on $\mathscr{D}$.  Near the top corner of $\mathscr{D}$ the projective 
coordinates are given by
\begin{align}
\rho=\sqrt{t}, \  \xi=x/\rho, \ z, \label{D-coord1}
\end{align} 
where in these coordinates $\rho$ and $\xi$ are the defining functions of the front and the left 
boundary face, respectively. Near the lower corner of $\mathscr{D}$ the projective 
coordinates are given by
\begin{align}
\eta=\sqrt{t}/x, \ x, \ z, \label{D-coord2}
\end{align}
where in these coordinates $x$ and $\eta$ are the defining functions of the front and the 
temporal diagonal boundary face, respectively. $\mathscr{D}$ comes with a canonical blowdown map 
$\beta_{\mathscr{D}}:\mathscr{D}\to \R^+\times \overline{M}$, which in local coordinates is simply the coordinate change 
back to $(t,x,z)$. 

The identification of the exterior bundle $\wedge^k T^*M$ with its dual $ (\wedge^k T^*M)^*$ via $g^M$ in fact does 
not induce any shift in the polyhomogeneous expansion of $\A^k_{\mu}e^{-t\Delta_k(\mu)}$, since we consider the setup 
after unitary transformation in Proposition \ref{unitary} to a product $L^2$-space. By Lemma
\ref{aH} we arrive at the following

\begin{prop}\label{d-expansion1}
The lift $\beta^*_{\mathscr{D}}\textup{tr}\left(\A^k_{\mu}e^{-t\Delta_k(\mu)}\right)$ is a polyhomogeneous conormal 
distribution on $\mathscr{D}$ of zero leading order at the front face ff, leading order $(-\dim M)$ at the temporal diagonal td, and of 
leading order $2$ at the left boundary face lf. Moreover the asymptotic expansion at ff does not admit logarithmic terms and 
the expansion at td ($\rho_{\textup{td}}$ denotes the defining function of td) is of the form
\begin{align}
\beta^*_{\mathscr{D}}\textup{tr}\left(\A^k_{\mu}e^{-t\Delta_k(\mu)}\right)
\sim \rho_{\textup{td}}^{-\dim M}\left(\sum\limits_{k=0}^{\infty}a_{2k}\rho_{\textup{td}}^{2k}\right), 
\ \textup{as} \ \rho_{\textup{td}}\to 0.
\end{align}
\end{prop}

\begin{proof}
The polyhomogeneity and the leading orders in the asymptotic expansions of 
$\beta^*_{\mathscr{D}}\textup{tr}\left(\A^k_{\mu}e^{-t\Delta_k(\mu)}\right)$ 
at the boundary faces of $\mathscr{D}$ follow by Lemma \ref{aH}. Moreover, at the 
temporal diagonal Lemma \ref{aH} implies
\begin{align}\label{lifted-ptw-trace}
\beta^*_{\mathscr{D}}\textup{tr}\left(\A^k_{\mu}e^{-t\Delta_k(\mu)}\right)
\sim \rho_{\textup{td}}^{-\dim M}\left(\sum\limits_{k=0}^{\infty}a_{k}\rho_{\textup{td}}^{k}\right), 
\ \textup{as} \ \rho_{\textup{td}}\to 0.
\end{align}
We have the following classical pointwise trace expansion on Riemannian manifolds
\begin{align}\label{ptw-trace}
\textup{tr}\left(\A^k_{\mu}e^{-t\Delta_k(\mu)}\right) \sim 
(\sqrt{t})^{-\dim M} \sum_{k=0}^{\infty} \A_{2k}(\sqrt{t})^{2k}, \ 
\textup{as} \ t\to 0.
\end{align}
Employing the projective coordinates \eqref{D-coord1} and \eqref{D-coord2} we find 
$\beta^*_{\mathscr{D}}t=\rho_{\textup{ff}}^2\rho_{\textup{td}}^2$. Hence, lifting \eqref{ptw-trace} 
to $\mathscr{D}$, we deduce for \eqref{lifted-ptw-trace} that $a_k\equiv 0$ for $k$ odd.
\end{proof}

Suppose $\mu$ is a density on $\mathscr{D}$, which is smooth up to all 
boundary faces and nowhere vanishing. A smooth $b$-density $\mu_b$ is by 
definition any density of the form 
$(\rho_{\textup{ff}}\rho_{\textup{lf}}\rho_{\textup{td}})^{-1}\mu$.
Let $\pi_{\mathscr{D}}:\R^+\times \overline{M}\to \R^+$ is the projection onto the first factor. 
We can push forward densities in a natural way and a straightforward computation 
in local coordinates shows
\begin{align}\label{push}
(\pi_{\mathscr{D}}\circ \beta_{\mathscr{D}})_* 
\beta^*_{\mathscr{D}}\left( x\cdot \textup{tr}\left(\A^k_{\mu}e^{-t\Delta_k(\mu)}\right)\right)=
\textup{Tr} \left( \A^k_{\mu}e^{-t\Delta_k(\mu)} \right) f(t) \frac{dt}{t},
\end{align}
where $f$ is some smooth function on $\R^+\cup \{0\}$.
We derive the short-time asymptotics of $\textup{Tr} \left( \A^k_{\mu}e^{-t\Delta_k(\mu)} \right)$ 
employing the Push-forward theorem of Melrose \cite{Mel:COC}.
We also refer to \cite{Mel:COC} for the exact definition of b-fibrations and b-densities.

\begin{thm}\label{melrose}\textup{(Melrose, \cite{Mel:COC})}
Let $\mathfrak{W}_1, \mathfrak{W}_2$ be manifolds with corners, and 
let $\{(H_i,\rho_i)\}_{i\in I}$ and $\{(K_j,\delta_j)\}_{j\in J}$ be the respective indexing 
of the boundaries and the corresponding defining functions. Let $f:\mathfrak{W}_1\to \mathfrak{W}_2$ 
be a b-fibration with 
\begin{align}
f^*\delta_j=h \cdot \prod_{i\in I} \rho_j^{e(j,i)},
\end{align}
where $h\in C^{\infty}(\mathfrak{W}_1)$ is strictly positive and the exponents $e(j,i)\in \N_0$ are 
such that for any $j\in J$ there is at most one $i\in I$ with $e(j,i)\neq 0$. If 
$\w\in \mathscr{A}_{\textup{phg}}^E(\mathfrak{W}_1)$ for an index set $E=(E_i)_{i\in I}$ such that $f_*\w$
exists, then $f_*\w\in \mathscr{A}_{\textup{phg}}^{f_*E}(\mathfrak{W}_2)$, with 
the index family $f_*E=(f_*E)_{j\in J}$, where the index sets $(f_*E)_j=\{(\gamma, p)\}\in \C\times \N_0$ 
are comprised of all pairs $(\gamma, p)$ such that 
\begin{enumerate}
\item $(\gamma,p)=(s/e(j,i), l)$, for $(s,l)\in E_i$ with $e(j,i)\neq 0$,
\item $(\gamma,p)=(s/e(j,i), l+l'+1)$, for $(s,l)\in E_{i}$ and $(s',l')\in E_{i'}$ \\
with $e(j,i), e(j,i')\neq 0$ and $s/e(j,i)=s'/e(j,i')$.
\end{enumerate}
\end{thm}

In the projective coordinates \eqref{D-coord1} and \eqref{D-coord2} we 
directly verify that the b-fibration $(\pi_{\mathscr{D}}\circ \beta_{\mathscr{D}})$ satisfies
$(\pi_{\mathscr{D}}\circ \beta_{\mathscr{D}})^*t=\rho_{\textup{ff}}^2\rho_{\textup{td}}^2$. Hence, 
by Push-forward Theorem \ref{melrose} we infer from \eqref{push} and Proposition \ref{d-expansion1}
\begin{align}
\textup{Tr}\left(\A^k_{\mu}e^{-t\Delta_k(\mu)}\right) \sim 
\sum_{j=0}^{\infty}A_jt^{j-\frac{m}{2}}+\sum_{j=1}^{\infty}C_jt^{\frac{j}{2}}+
\sum_{j=1}^{\infty}G_jt^{\frac{j}{2}}\log t, \ \textup{as} \ t\to 0.
\end{align}
Consequently
\begin{align}
\underset{s=0}{\textup{Res}}\,\int_0^{\infty}t^{s-1}\textup{Tr}\left(\A_{\mu}^k \left(e^{-t\Delta_k(\mu)}-P_k(\mu)\right)\right)dt=-\textup{Tr}(\A_{\mu}^kP_k(\mu)).
\end{align}
In view of the additional $s$-factor in \eqref{f}, we find by \eqref{t-f}
\begin{equation}\begin{split}
\frac{d}{d\mu}\log T(M, E, g^M_{\mu})&=
\frac{1}{2}\sum_{k=0}^{\dim M}(-1)^{k}\textup{Tr}\left(\A_{\mu}^k P_k(\mu)\right)\\
&=\frac{d}{d\mu}\log \|\cdot \|^{-1}_{\det H^*(M,E), g^M_{\mu}}.
\end{split}
\end{equation}
We have proved the following main result (Theorem \ref{main1}) of this section.

\begin{thm}\label{higher-order}
Let $(M,g^M_{\mu}), \mu \in \mathscr{I}$ be a compact odd-dimensional  
Riemannian manifold with an isolated conical singularity 
and a singular neighborhood $\mathscr{U}\cong (0,1)\times N$, endowed with a 
family of conical metrics, parametrized over an open $\mathscr{I}\subset \R$, 
such that 
\begin{align}
g^M_{\mu}\restriction \mathscr{U}=dx^2 \oplus x^2g^N(x,\mu), \ 
g^N \in C^{\infty}\left([0,1]\times \mathscr{I}, \textup{Sym}^2(T^*N)\right).
\end{align}
Let $(E,\nabla, h^E)$ be a flat Hermitian vector bundle over $M$ and let 
$(E_N,\nabla_N,h_N)$ be the flat vector bundle  over $N$, which is associated
to $E|_{\mathscr{U}}$. 
Assume that the Gau\ss-Bonnet operator is essential self-adjoint.
Then the  Ray-Singer norm 
$\|\cdot \|^{RS}_{(M,E;g^M_{\mu})}$ is independent of $\mu$, i.e. 
\begin{align}
\frac{d}{d\mu} \|\cdot \|^{RS}_{(M,E;g^M_{\mu})}=0.
\end{align}
\end{thm}

%%%%%%%%%%%%%%%%%%
\section{Scaling Invariance of the Metric Anomaly for Analytic Torsion}
\label{section-scaling}
%%%%%%%%%%%%%%%%%%

To begin with we need to introduce some notation. Let $\mathscr{A}$,
$\mathscr{B}$ be two $\Z_2$-graded algebras with identity and let 
$\mathscr{A}\widehat\otimes\mathscr{B}$ denote their $\Z_2$-graded tensor
product. Identify $\mathscr{A}$ with $\mathscr{A}\widehat\otimes I$ and let
$\widehat{\mathscr{B}}:=I\widehat\otimes\mathscr{B}$. Write $\wedge:=\widehat
\otimes$ so that $\mathscr{A}\widehat\otimes\mathscr{B}=\mathscr{A}\wedge
\mathscr{B}$. 

Let $(X,g^X)$ be a compact oriented Riemannian manifold of odd dimension $m$ 
with boundary $\partial X$. In 
\cite[(1.15)]{BruMa:AAF} elements
\begin{equation}
\dot{R}^{T\partial X}\in\Lambda T^*\partial X \, \widehat{\otimes} 
\widehat{\Lambda T^*\partial X}\quad \textup{and}\quad \dot{S}\in
\Lambda T^*\partial X\widehat\otimes \widehat{\Lambda T^*\partial X}
\end{equation}
have been introduced. They are defined as follows. 
Let $R^{T\partial X}$ be the curvature tensor of $(\partial X, g^{\partial X})$ and 
denote by $\{e_k\}_{k=1}^m$ 
a local orthonormal frame field on $(TX,g^{\partial X})$. We assume  
that near 
the boundary $e_m$ is the inward-pointing unit normal vector at every boundary 
point. Let 
$\{e^*_k\}_{k=1}^m$ be the dual orthonormal frame field of $T^*X$ and let 
$\widehat{e^*_k}$ be the canonical identification with an element
of $\widehat{\Lambda T^*X}$. Let $j\colon Y
\hookrightarrow X$ be the canonical embedding. Then 
\begin{equation}\label{RS3}
\dot{R}^{T\partial X}:= \frac{1}{2}\sum_{1\le k,j\le m-1}\langle e_k, R^{T\partial X} 
e_j\rangle \widehat{e^*_k} \wedge \widehat{e^*_j}\quad\textup{and}\quad
\dot{S}:=\frac{1}{2}j^*\nabla^{TX}\widehat{e_m^*}.
\end{equation}
$\dot{R}^{T\partial X}$ is defined in terms of the curvature of the Levi-Civita 
connection, 
induced by $g^{\partial X}$. $\dot{S}$ measures the deviation from a metric 
product structure near the boundary. $\dot{R}^{T\partial X}$ and $\dot{S}^2$ are 
both homogeneous of degree two. Let
\begin{align}
\int^{B_{\partial X}}:\Lambda T^*\partial X \, \widehat{\otimes} \, 
\widehat{\Lambda T^*\partial X} \to \Lambda T^*\partial X,
\end{align}
be the Berezin integral (see (\cite[Section 1.1]{BruMa:AAF}). It is 
non-trivial only on elements which are homogeneous of degree $m$. Then the 
secondary class $B(\nabla^{TX})$, 
introduced in  \cite[(1.17)]{BruMa:AAF} is defined as follows
\begin{align}\label{RS2}
B(\nabla^{TX})=\int^{B_{\partial X}} \exp \left(-\frac{1}{2}\dot{R}^{T\partial X}\right) 
\sum_{k=1}^{\infty} \frac{(-\dot{S}^2)^k}{4k\Gamma(k+1)}.
\end{align}
By(\cite[Theorem 0.1]{BruMa:AAF} the metric anomaly of analytic torsion is 
expressed in terms of $B(\nabla^{TX})$.
For our purpose we will need that $B(\nabla^{TX})$ is scaling invariant. This
is verified in the next proposition.
\begin{prop}\label{Scaling}
Let $g^X_1$ be a Riemannian metrics on a compact manifold $(X,\partial X)$ 
with boundary. Let $s>0$ and put $g^X_2=s\cdot g^X_1$.
Let $\nabla^{TX}_1, \nabla^{TX}_2$ denote the corresponding 
Levi-Civita connections. Then 
\begin{align}
 B(\nabla^{TX}_1)=B(\nabla^{TX}_2).
\end{align}
\end{prop}
\begin{proof}
Denote by $(\dot{R}^{T\partial X}_i$ and $\dot{S}_i$ the elements \eqref{RS3} with
respect to the metric $g^{TX}_i$, $i=1,2$. It follows from \eqref{RS3} that
\begin{equation}\label{scaling} 
\dot{R}^{\partial T}(g^X_2)=s\dot{R}^{\partial T}(g^X_1), 
\quad \dot{S}(g^X_2)=\sqrt{s}\dot{S}(g^X_1).
\end{equation}
The Berezin integral also depends on the metric. We denote it by 
$\int^{B_{\partial X}}_j,j=1,2$. 
Using the definition of the Berezin integral \cite[(1.1)]{BruMa:AAF} we obtain
\begin{align}
\int^{B_{\partial X}}_2=s^{-\dim \partial X /2} \int^{B_{\partial X}}_1.
\end{align}
Since $\dot{R}^{T\partial X}$ and $\dot{S}^2$ are both of degree two and the 
Berezin integral is non-trivial only on terms homogeneous of degree 
$\dim \partial X$, it follows from \eqref{scaling} that
\begin{align*}
B(\nabla^{TX}_2)&=\int^{B_{\partial X}}_2 \exp \left(-\frac{1}{2}
\dot{R}^{T\partial X}_2\right) 
\sum_{k=1}^{\infty} \frac{(-\dot{S}_2^2)^k}{4k\Gamma(k+1)}\\
&= \int^{B_{\partial X}}_1 \exp \left(-\frac{1}{2}\dot{R}^{T\partial X}_2\right)
\sum_{k=1}^{\infty} \frac{(-\dot{S}_2^2)^k}{4k\Gamma(k+1)} \cdot 
s^{-\dim \partial X /2}\\
&= \int^{B_{\partial X}}_1 \exp \left(-\frac{1}{2}\dot{R}^{T\partial X}_1\right) 
\sum_{k=1}^{\infty} \frac{(-\dot{S}_1^2)^k}{4k\Gamma(k+1)}=B(\nabla^{TX}_1).
\end{align*} 
\end{proof}

In the special case of a cone with a flat cross-section $N$ the scaling 
invariance of the metric anomaly follows also  from 
\cite[(4.43)]{BruMa:AAF}.

%%%%%%%%%%%%%%%%%%
\section{Analytic Torsion of the Truncated Cone}
\label{section-truncated}
%%%%%%%%%%%%%%%%%%

%%%%%%%%%%%%%%%%%%%%
\subsection{Decomposition of the de Rham Complex of the Truncated Cone}\label{decomposition}
%%%%%%%%%%%%%%%%%%%%

The Analytic torsion of an exact cone can be evaluated by decomposing 
the de Rham complex into short subcomplexes. This decomposition 
 has been used in the computation of analytic torsion of cones 
in \cite{Ver:ATO}. The same 
decomposition and its symmetry are in fact also valid for truncated cones. 
Let $I$ be either the open interval $(0,1)$ or the closed interval 
$[\varepsilon,1]$, $\varepsilon>0$. Let $(N^n,g^N)$ be an even-dimensional 
closed Riemannian manifold and let $C_I(N):=I\times N$, equipped with the 
metric
\[
g = dx^2 \oplus x^2g^N, x\in I.
\]
Let $(E,\nabla, h^E)$ be a flat complex Hermitian vector bundle over $C_I(N)$ 
and $(E_N,\nabla_N,h_N)$ its restriction to the cross-section $N$. 
Let $(\Omega^*_c(C_I(N),E),d_*)$ be the associated twisted de Rham complex, 
where $\Omega^*_c(C_I(N),E)$ are the $E$-valued differential forms on 
$C_I(N)$ with compact support and $d_*$ denotes the differential.  
As in Section \ref{section-variation} we consider the map given by 
separation of variables
\begin{align}\label{separation}
\Psi_k : C^{\infty}_c(I,\Omega^{k-1}(N,E_N)\oplus \Omega^k(N,E_N))\to 
\Omega_c^k(C_I(N),E) \\
(\w_{k-1},\w_k)\mapsto x^{k-1-n/2}\w_{k-1}\wedge dx + x^{k-n/2}\w_k, \nonumber
\end{align}
where $\w_k,\w_{k-1}$ are identified with their pullback to $C_I(N)$ under the 
projection $\pi: I\times N\to N$ onto the second factor, and $x\in I\subset 
\R^+$. 
As in Proposition \ref{unitary}, the map $\Psi_k$ extends to an isometry 
\begin{align*}
\Psi_k: L^2(I, L^2(\Omega^{k-1}(N,E_N)\oplus \Omega^k(N,E_N), g^N, h_N), dx)\to 
L^2(\Omega_c^k(C_I(N),E), g,h^E).
\end{align*}
Let
\[
c_k:=(-1)^k\left(k-\frac{n}{2}\right),\quad k=0,...,m.
\]
As in \cite[(5.5)]{BruSee:AIT} we obtain
\begin{equation}\label{derivative} 
\begin{split}
\Psi_{k+1}^{-1} d_k \Psi_k= \left( \begin{array}{cc}0&(-1)^k\partial_x\\
0&0\end{array}\right)+\frac{1}{x}
\left( \begin{array}{cc}d_{k-1,N}&c_k\\0&d_{k,N}\end{array}\right), \\
\Psi_k^{-1} d_k^* \Psi_{k+1}= \left( \begin{array}{cc}0&0\\
(-1)^{k+1}\partial_x&0\end{array}\right)+
\frac{1}{x}\left( \begin{array}{cc}d_{k-1,N}^*&0\\c_k&d_{k,N}^*\end{array}\right), 
\end{split}
\end{equation}
where $d_{k,N}$ is the exterior differential on $\Omega^k(N,E_N)$.
Following a suggestion of M. Lesch,
we decompose the de Rham complex $(\Omega^*_c(C_I(N),E),d_*)$ into a direct sum 
of subcomplexes of two types. 
The first type of the subcomplexes is given as follows. Let 
$\psi \in \Omega^k(N,E_N)$ be a coclosed 
eigenform of the Laplacian $\Delta_{k,N}$ on $\Omega^k(N,E_N)$ with eigenvalue 
$\eta >0$. 
We consider the following four associated pairs
\begin{equation}\label{xi1}
\begin{split}
&\xi_1:=(0,\psi)\in \Omega^{k-1}(N,E_N)\oplus \Omega^{k}(N,E_N), \\ 
&\xi_2:=(\psi,0)\in 
\Omega^{k}(N,E_N)\oplus \Omega^{k+1}(N,E_N), \\
&\xi_3:=(0,d_N\psi/\sqrt{\eta})\in \Omega^{k}(N,E_N)\oplus \Omega^{k+1}(N,E_N), \\ 
&\xi_4:=(d_N\psi/\sqrt{\eta},0)\in \Omega^{k+1}(N,E_N)\oplus \Omega^{k+2}(N,E_N).
\end{split}
\end{equation}
Denote by $\langle \xi_1,\xi_2,\xi_3,\xi_4\rangle$ the span of 
$\xi_1,\dots,\xi_4$. Then
$C^{\infty}_c(I,\langle \xi_1,\xi_2,\xi_3,\xi_4\rangle)$ is invariant under 
$d,d^*$ and we obtain a subcomplex
\begin{align}\label{complex-1}
0 \rightarrow C_c^{\infty}(I,\left< \xi_1\right>) \xrightarrow{d_0} 
C_c^{\infty}(I,\left<\xi_2,\xi_3\right>) \xrightarrow{d_1}
C_c^{\infty}(I,\left<\xi_4\right>)
\rightarrow 0,
\end{align}
where $d_0,d_1$ take the following form with respect to the chosen 
basis:
\begin{align*}
d_0=\left(\begin{array}{c}(-1)^k\partial_x+\frac{c_k}{x}\\ 
x^{-1}\sqrt{\eta}\end{array}\right), 
\quad d_1=\left(x^{-1}\sqrt{\eta}, \ (-1)^{k+1}\partial_x
+\frac{c_{k+1}}{x}\right).
\end{align*}
The associated Laplacians are of the following form
\begin{align}\label{psi-laplacians}
\Delta_{0,\eta}:= d_0^*d_0=-\partial_x^2+
\frac{1}{x^2}\left[\eta +
\left(k+\frac{1}{2}-\frac{n}{2}\right)^2
-\frac{1}{4}\right]=d_1d_1^*=:
\Delta_{1,\eta}.
\end{align}
with respect to the identification of $\phi =f\cdot 
\xi_{i}\in C^{\infty}_c(I,\langle \xi_{i}\rangle)$, $i=1,\dots,4$, 
with its scalar part $f \in C^{\infty}_c(I)$.
The subcomplexes \eqref{complex-1} always come in pairs on oriented cones. The 
twin subcomplex is 
constructed by considering $\phi:=*_N\psi \in \Omega^{n-k}(N,E_N)$. Then 
$d^*_N\phi/\sqrt{\eta}$ is 
again a coclosed eigenform of the Laplacian on $\Omega^{n-k-1}(N,E_N)$ with 
eigenvalue $\eta$, and we put 
\begin{equation}\label{xi2}
\begin{split}
&\widetilde{\xi_1}:=(0,d^*_N\phi/\sqrt{\eta})\in \Omega^{n-k-2}(N,E_N)\oplus 
\Omega^{n-k-1}(N,E_N), 
\\ &\widetilde{\xi_2}:=(d^*_N\phi/\sqrt{\eta},0)\in \Omega^{n-k-1}(N,E_N)\oplus 
\Omega^{n-k}(N,E_N), \\
&\widetilde{\xi_3}:=(0,\phi)\in \Omega^{n-k-1}(N,E_N)\oplus \Omega^{n-k}(N,E_N), 
\\ &\widetilde{\xi_4}
:=(\phi,0)\in \Omega^{n-k}(N,E_N)\oplus \Omega^{n-k+1}(N,E_N).
\end{split}
\end{equation}
Denote by $\langle \widetilde{\xi_1},\widetilde{\xi_2},\widetilde{\xi_3},
\widetilde{\xi_4}\rangle$ 
the vector space, spanned by these vectors.
$C_c^{\infty}(I,\langle \widetilde{\xi_1},\widetilde{\xi_2},\widetilde{\xi_3},
\widetilde{\xi_4}\rangle)$ 
is invariant under the action of $d,d^*$ and we obtain a subcomplex
\begin{align}\label{complex-2}
0 \rightarrow C_c^{\infty}(I,\langle \widetilde{\xi_1}\rangle) 
\xrightarrow{\widetilde{d_0}} 
C_c^{\infty}(I,\langle\widetilde{\xi_2},\widetilde{\xi_3}\rangle) 
\xrightarrow{\widetilde{d_1}}
C_c^{\infty}(I,\langle\widetilde{\xi_4}\rangle)\rightarrow 0.
\end{align}
Computing explicitly the action of the exterior derivative \eqref{derivative} 
on the basis elements $\widetilde{\xi_i}$ we find
\begin{align*}
\widetilde{d_0}=\left(\begin{array}{c}(-1)^{n-k-1}\partial_x+\frac{c_{n-k-1}}{x}\\ 
x^{-1}\sqrt{\eta}\end{array}\right), 
\quad \widetilde{d_1}=\left(x^{-1}\sqrt{\eta}, \ (-1)^{n-k}\partial_x
+\frac{c_{n-k}}{x}\right).
\end{align*}
As before we compute the corresponding Laplacians and find
\begin{align}\label{phi-laplacians}
\widetilde{\Delta}_{0,\eta}=\widetilde{\Delta}_{1,\eta}=-\partial_x^2+\frac{1}{x^2}\left[\eta 
+\left(k+\frac{1}{2}-
\frac{n}{2}\right)^2-\frac{1}{4}\right]=\Delta_{0,\eta}=\Delta_{1,\eta}, 
\end{align}
where the operators are again identified with their scalar actions.

The second type of the subcomplexes comes from the harmonic forms 
$\cH^k(N,E_N)$ on $N$. Fix an orthonormal basis $\{u_i\}$ of $\cH^k(N,E_N)$
and observe that any subspace 
$C^{\infty}_c(I,\langle0\oplus u_i,u_i\oplus 0\rangle)$ 
is invariant under $d,d^*$. Consequently we obtain a subcomplex of the de Rham 
complex
\begin{equation}\label{complex-3}\begin{split}
0\to C^{\infty}_c(I,\langle 0\, \oplus \, &u^k_i\rangle)\xrightarrow{d^H_k} 
C^{\infty}_c(I,\langle u^k_i\oplus 0\rangle) \to 0, \\
&d^H_k=(-1)^k\partial_x+\frac{c_k}{x},
\end{split}
\end{equation}
where the action of $d^H_k$ is identified with its scalar action, as before. 
The Laplacians of the complex are given by
\begin{equation}
\begin{split}
H^k_0:&=(d^H_k)^*d^H_k=-\partial_x^2+\frac{1}{x^2}
\left(\left(\frac{(n-1)}{2}-k\right)^2-\frac{1}{4}\right), \\ 
H^k_1:&=d^H_k(d^H_k)^*=-\partial_x^2+\frac{1}{x^2}
\left(\left(\frac{(n+1)}{2}-k\right)^2-\frac{1}{4}\right).
\end{split}
\end{equation}  

%%%%%%%%%%%%%%%%%%%%
\subsection{The Relative Ideal Boundary Conditions}
%%%%%%%%%%%%%%%%%%%%

By \eqref{poincare} the analytic torsion  with respect to absolute or relative 
boundary conditions coincide. We will work with relative boundary conditions.

Let $d_{k,\max}$ denote the maximal extension of $d_k$ in 
$L^2(\Omega_c^*(C_I(N),E),g,h^E)$. Furthermore, let
$d_{k,\min}$ denote the graph closure of $d_k$ in 
$L^2(\Omega_c^*(C_I(N),E),g,h^E)$. Then we have
$\dom (d_{k,\min}) \subset \dom (d_{k,\max})$, where both spaces are the Hilbert 
spaces equipped with the graph-norm. 
Despite the fact that the differential $d_{k}$ is not elliptic, there is still 
a well-defined trace on its maximal domain by the trace theorem of Paquet 
\cite{Paq:PMP}.
\begin{thm}\label{trace-theorem} \textup{\cite[Theorem 1.9]{Paq:PMP}} 
Let $(X,g^X)$ be a compact oriented Riemannian manifold with isolated 
conical singularities and with smooth boundary $\partial X$. Let 
$\iota: \partial X \hookrightarrow X$ be the natural inclusion. 
Let $(E,\nabla, h^E)$ be a flat complex Hermitian vector bundle over $X$ and 
$(E_{\partial X},\nabla_{\partial X},h_{\partial X})$
its restriction to the boundary.
Then the pullback $\iota^*:\Omega^k(X,E) \to \Omega^k(\partial X, E_{\partial X})$
 with 
$\Omega^k(\partial X, E_{\partial X})=\{0\}$ for $k=\dim X$, extends continuously 
to a linear surjective map 
\begin{align}
\iota^*:\dom (d_{k,\max})\rightarrow H^{-1/2}(d_{k,\partial X}),
\end{align} 
where $d_{k,\partial X}$ is the  differential on 
$\Omega^k(\partial X, E_{\partial X})$,  
$H^{-1/2}(\Omega^k(\partial X, E_{\partial X}))$ the $(-1/2)$-th Sobolev space on 
$\partial X$ and
\begin{align*}
 H^{-1/2}(d_{k,\partial X}):=\{\w \in H^{-1/2}(\Omega^k(\partial X, E_{\partial X})) 
\mid 
d_{k,\partial X} \w \in H^{-1/2}(\Omega^{k+1}(\partial X, E_{\partial X}))\},
\end{align*}
is a Hilbert space with respect to the graph-norm.
\end{thm}

\begin{remark}
The trace theorem \cite[Theorem 1.9]{Paq:PMP} is stated for the untwisted 
case on compact (non-singular) 
Riemannian manifolds. Extension to flat Hermitian vector bundles is 
straightforward. Moreover, the analysis 
localizes to an open neighborhood of the boundary $\partial X$, so the trace 
theorem carries over to compact Riemannian manifolds with singular structure 
away from $\partial X$.
\end{remark}

Fix the relative extension of the Laplacian, induced by $d^*_{k,\min}$ 
\begin{align}
\Delta_k^{\textup{rel}}:=d^*_{k,\min}d_{k,\min}+d_{k-1,\min}
d^*_{k-1,\min}.
\end{align}
The minimal extension $d_{k,\min}$ is defined as the graph closure of the de
 Rham differential $d_k$ 
on $\Omega^*_c(C_I(N),E)$ in $L^2(\Omega^*(C_I(N),E),g,h^E)$. Hence, 
Theorem \ref{trace-theorem} implies 
\begin{equation}\label{rel-bc}
\begin{split}
&\dom (d_{k,\min})\subseteq \{\w\in \dom (d_{k,\max})|\iota^*\w=0\}, \\
&\dom (\Delta^{\textup{rel}}_k)\subseteq \{\w \in \dom (\Delta_{k,\max})| 
\iota^*\w=0, \iota^*(d^*_{k-1}\w)=0\}.
\end{split}
\end{equation}
By the Hodge decomposition of $\Omega^*(N,E_N)$, the de Rham complex 
$(\Omega_c^*(C_I(N)),d)$ 
decomposes completely into subcomplexes of the three types \eqref{complex-1}, 
\eqref{complex-2} and \eqref{complex-3}. 
It has been observed in \cite[Theorem 3.5]{Ver:ATO} that in each degree $k$ 
this induces a compatible decomposition 
for the relative extension of the Laplacian. In the classical language of 
\cite{Wei:LOI} we have a decomposition into reducing subspaces of the 
Laplacians. 
Hence the Laplacians $\Delta_k^{\textup{rel}}$ induce self-adjoint relative 
extensions of the 
Laplacians $\Delta_{j}^{\psi}, \Delta_{j}^{\phi}$, $j=0,1$, and $H^k_0,H^k_1$. 

In order to discuss the corresponding relative boundary conditions explicitly, 
note that by the classical theory of linear differential equations for any 
element $f$ of $\dom (\Delta_{j,\max}^{\psi}),\dom (\Delta_{j,\max}^{\phi}),j=0,2,$ 
or $\mathscr{D}(H^k_{i,\max}),i=0,1,$ 
$f$ and its derivative $f'$ are both locally absolutely continuous in $I$ with 
well-defined values at $x\in \partial I$. More 
precisely at $x=1$ in case $I=(0,1]$, and $x\in \{\e,1\}$ in case $I=[\e,1]$. 
Hence the following boundary conditions are well-defined 
\begin{equation*}
B_N^k(x)f:=f'(x)+(-1)^{k+1}c_{k}f(x), \quad B_D(x)f:=f(x), \quad x\in \partial I.
\end{equation*}
In case of $I=(0,1]$ boundary conditions at $x=0$ need to be imposed. 
By the well-known analysis, compare \cite{BruSee:AIT}, \cite{Che:OTS} and the 
basic discussion of the second author \cite{Ver:ZDF}, 
any solution $f\in L^2(0,1)$ to 
\begin{align}
 -\frac{d^2f}{dx^2}+\frac{1}{x^2}\left(\nu^2-\frac{1}{4}\right)f=g\in L^2(0,1),
\end{align}
admits an asymptotic expansion of the form
\begin{equation}
 f(x)\sim \left\{ 
\begin{array}{ll}
c_1(f)\sqrt{x}+c_2(f)\sqrt{x}\log(x)+O(x^{3/2}),  &\nu=0, \\
c_1(f)x^{\nu+1/2}+c_2(f)x^{-\nu+1/2}+O(x^{3/2}),  &\nu\in (0,1),\\
O(x^{3/2}), &\nu\geq 1,
\end{array}
\right. \ x\to 0,
\end{equation}
where the coefficients $c_1(f)$ and $c_2(f)$ depend only on $f$. Consequently 
the following boundary conditions at $x=0$ 
are well-defined
\begin{align}
 B_N(0)f:=
\left\{ 
\begin{array}{ll} c_1(f), & \nu \in [0,1), \\ 0, &\nu\geq 1,\end{array}\right.
\quad 
B_D(0)f:=
\left\{ 
\begin{array}{ll} c_2(f), & \nu \in [0,1), \\ 0, &\nu\geq 1.\end{array}\right.
\end{align}

\begin{prop}\label{rel-bc-prop} Let
 $\Delta_{j,\eta},\widetilde{\Delta}_{j,\eta}, j=0,1$, be the Laplacians
of the pair of subcomplexes \eqref{complex-1} and \eqref{complex-2}, and let  
$H^k_0,H^k_1,$ be the 
Laplacians  of the subcomplex \eqref{complex-3}. 
The domains of their relative self-adjoint extensions 
are given as follows. For $I=[\e,1]$
\begin{align*}
&\mathscr{D}(\Delta^{\textup{rel}}_{0,\eta})
=\{f\in\mathscr{D}(\Delta_{0,\eta}^{\max}) \mid B_D(\e)f=0, \ B_D(1)f=0\}=\mathscr{D}(\widetilde{\Delta}^{\textup{rel}}_{0,\eta}), \\ 
&\mathscr{D}(\Delta_{1,\eta}^{\textup{rel}})=\{f\in 
\mathscr{D}(\Delta_{1,\eta}^{\max})\mid B_N^{k+1}(\e)f=0, \ B_N^{k+1}(1)f=0\}, \\
&\mathscr{D}(\widetilde{\Delta}_{1,\eta}^{\textup{rel}})=\{f\in 
\mathscr{D}(\widetilde{\Delta}_{1,\eta}^{\max})\mid B_N^{n-k}(\e)f=0, \ B_N^{n-k}(1)f=0\}, \\
&\mathscr{D}(H^k_{1,\textup{rel}})=\{f\in \mathscr{D}(H^k_{1,\max})
\mid B_N^k(\e)f=0, \ B_N^k(1)f=0\}, \\
&\mathscr{D}(H^k_{0,\textup{rel}})=\{f\in \mathscr{D}(H^k_{0,\max})\mid B_D(\e)f=0, 
\ B_D(1)f=0\}.
\end{align*}
For $I=(0,1]$ the domains are given by
\begin{align*}
&\mathscr{D}(\Delta^{\textup{rel}}_{0,\eta})=\{f\in\mathscr{D}(\Delta_{0,\eta}^{\max})  \mid B_D(0)f=0, \ B_D(1)f=0\}=\mathscr{D}(\widetilde{\Delta}^{\textup{rel}}_{0,\eta}), \\ 
&\mathscr{D}(\Delta_{1,\eta}^{\textup{rel}})=\{f\in \mathscr{D}((\Delta_{1,\eta}^{\max})\mid B_D(0)f=0, \ B_N^{k+1}(1)f=0\}, \\
&\mathscr{D}(\widetilde{\Delta}_{1,\eta}^{\textup{rel}})=\{f\in \mathscr{D}(\widetilde{\Delta}_{1,\eta}^{\max})\mid B_D(0)f=0, \ B_N^{n-k}(1)f=0\}, \\
&\mathscr{D}(H^k_{1,\textup{rel}})=\{f\in \mathscr{D}(H^k_{1,\max})\mid B_N(0)f=0, \ B_N^k(1)f=0\}, \\
&\mathscr{D}(H^k_{0,\textup{rel}})=\{f\in \mathscr{D}(H^k_{0,\max})\mid B_D(0)f=0, \ B_D(1)f=0\}.
\end{align*}
\end{prop}

\begin{proof}
The choice of the boundary conditions at $x=1$ in case $I=(0,1]$, and 
at $x\in \{\e,1\}$ in case $I=[\e,1]$, 
follows for the individual relative self-adjoint extensions from 
\eqref{rel-bc}, the explicit form of the exterior differentials 
\eqref{derivative} and the fact that for any $x\in \partial I$ 
and the inclusion $\iota_x:\{x\}\times N \hookrightarrow C_I(N)$, we have 
$\iota^*_x(f_{k-1},f_k)=f_k(x)$ for any $(f_{k-1},f_k)\in \dom (\Delta_{\max})$ 
with $f_k$ continuous at $x$. The boundary conditions at $x=0$ in case 
$I=(0,1]$ have been determined in \cite[Corollary 2.14]{Ver:ZDF} and 
\cite[Proposition 3.6 and 3.7]{Ver:ATO}.
\end{proof}

%%%%%%%%%%%%%%%%%%%%
\subsection{The Difference of Analytic Torsion for the  Truncated and the Full 
Cone}\label{difference}
%%%%%%%%%%%%%%%%%%%%

Let $(N^n,g^N)$ be a closed Riemannian manifold of even dimension. Let 
$C(N):=(0,1]\times N$, equipped with the metric $g=dx^2\oplus x^2g^N$ and
for $\varepsilon>0$ let $C_\varepsilon(N):=[\varepsilon,1]\times N$, equipped
with the same metric.
Let $\Delta_k^{\textup{rel}}$ and $\Delta_{k,\e}^{\textup{rel}}$ denote the Laplacians 
with relative boundary conditions on $k$-forms associated to $(C(N),g)$ and 
$(C_{\e}(N),g)$, respectively. Put
\begin{align}
 T(\e,s):=\frac{1}{2} \, \sum_{k=1}^{\dim C(N)} (-1)^k\cdot k 
\cdot \left(\zeta(s,\Delta_{k,\e}^{\textup{rel}})
-\zeta(s,\Delta_{k}^{\textup{rel}})\right).
\end{align}
$T(\e,s)$ is related to the scalar analytic torsions of $(C(N),g)$ and 
$(C_{\e}(N),g)$ by
\begin{align}
 T'(\e,0)=\log T(C_{\e}(N),E,g)-\log T(C(N),E,g).
\end{align}
Consider the decomposition of the de Rham complex as described in Section 
\ref{decomposition}. 
For each fixed degree $k$, the subcomplexes \eqref{complex-1} and 
\eqref{complex-2} are determined 
by a coclosed eigenform $\psi \in \Omega^k(N,E_N)$ of the Laplacian 
$\Delta_{k,N}$ with eigenvalue $\eta>0$. For $k=0,\dots, n+1$ let
\[
E_k:=\textup{Spec}(\Delta_{k,ccl,N})\backslash \{0\}.
\]
For $\eta\in E_k$ the relative boundary conditions for the Laplacians 
$\Delta_{j,\eta}, \widetilde{\Delta}_{j,\eta}$, $j=0,1$, 
of the subcomplex-pair \eqref{complex-1} and \eqref{complex-2}, and 
the Laplacians $H^k_0,H^k_1,$ of the subcomplex \eqref{complex-3} 
are discussed in Proposition \ref{rel-bc-prop}. Here we distinguish operators 
on $(C_{\e}(N),g)$ by an additional $\e$-subscript. 
\begin{defn} \label{zetas} 
Denote by $\textup{m}(\eta)$ the multiplicity of $\eta \in E_k$. For 
$\textup{Re}(s)\gg 0$ put
\begin{equation}\begin{split}
&\zeta_{k,H}(s,\e):= \dim H^k(N,E_N) 
\left(\zeta(s, H^k_{0,\e,\textup{rel}}) - \zeta(s, H^k_{0,\textup{rel}})\right), \\
&\zeta_k(s,\e):=\sum_{\eta \in E_k} \textup{m}(\eta) 
\left(\zeta(s,\Delta_{1,\eta, \e}^{\textup{rel}})-
\zeta(s,\widetilde{\Delta}_{1,\eta,\e}^{\textup{rel}})\right)-\left(\zeta(s, 
\Delta_{1,\eta}^{\textup{rel}})-
\zeta(s, \widetilde{\Delta}_{1,\eta}^{\textup{rel}})\right).
\end{split}
\end{equation}
\end{defn}
We note that $T(\e,s)$ can be expressed in terms of $\zeta_{k,H}(s,\e)$ and 
$\zeta_k(s,\e)$ as follows
\begin{align}\label{T}
T(\e,s)=\frac{1}{2}\sum_{k=0}^{n/2-1}(-1)^k\zeta_k(s,\e) + 
\frac{1}{2}\sum_{k=0}^{n}(-1)^{k+1}\zeta_{k,H}(s,\e), \ \textup{Re}(s)\gg 0
\end{align}
(see  \cite[(4.3), (4.4)]{Ver:ATO}). Evaluation of $\zeta_k'(0,\e)$ requires 
application of the double summation method, which has been 
introduced by Spreafico in \cite{Spr:ZFA}, \cite{Spr:ZIF} and was applied by 
the second named author in \cite{Ver:ATO}, see Theorem \ref{BV-Theorem},
to derive the general formula for analytic torsion of a bounded cone in. 
Evaluation of $\zeta'_{k,H}(0,\e)$ reduces to an explicit computation of 
finitely many zeta-determinants and application of \cite{Les:DOR}.
We begin with the evaluation of $\zeta_k'(0,\e)$ for each fixed degree $k$ 
along the lines of \cite[Section 6]{Ver:ATO}.

\begin{prop}\label{N-prop}
For $c>0$ let
\[
\Lambda_c:=\{\lambda \in \C\colon |\textup{arg}(\lambda -c)|=\pi /4\}
\]
and assume that $\Lambda_c$ is oriented counter-clockwise. Put 
\begin{align*}
\A_k:=\frac{(n-1)}{2}-k, \quad 
\nu(\eta):=\sqrt{\eta + \A_k^2}, \ \eta \in \textup{Spec}.
\Delta_{k,ccl,N}\backslash \{0\}
\end{align*}
Let $c(\eta)=c_0/(2\nu(\eta)^2),$ where $c_0>0$ is a fixed positive number, 
smaller than the lowest non-zero eigenvalue of $\Delta^{\textup{rel}}_*$ and 
$\Delta^{\textup{rel}}_{*,\epsilon}$, such that $c(\eta)<1$ for all $\eta\in E_k$.
Then $\zeta_k(s,\e)$ 
admits the following integral representation for 
$\textup{Re}(s)\gg 0$
\begin{align*}
\zeta_k(s,\e)=\sum_{\eta \in E_k} \textup{m}(\eta) \nu(\eta)^{-2s} 
\frac{s^2}{\Gamma(s+1)}\int_0^{\infty}\frac{t^{s-1}}{2\pi i}
\int_{\wedge_{c(\eta)}}\frac{e^{-\lambda t}}{-\lambda}\, 
t_{\eta, \e}^{k}(\lambda)  d\lambda \, dt,
\end{align*}
where $t_{\eta,\e}^{k}(\lambda)$ is defined in terms of zeta-determinants by
\begin{equation}\label{tm}
\begin{split}
t_{\eta,\e}^{k}(\lambda)= - \log \frac{\det_{\zeta} 
\left(\Delta_{1,\eta,\e}^{\textup{rel}}-
\nu(\eta)^2\lambda\right)}{\det \left(\Delta_{1,\eta,\e}^{\textup{rel}}\right)}
+\log \frac{\det_{\zeta} \left(\widetilde{\Delta}_{1,\eta,\e}^{\textup{rel}}-
\nu(\eta)^2\lambda\right)}
{\det \left(\widetilde{\Delta}_{1,\eta,\e}^{\textup{rel}}\right)} \\
+ \log \frac{\det_{\zeta} \left(\Delta_{1,\eta}^{\textup{rel}}-
\nu(\eta)^2\lambda\right)}
{\det \left(\Delta_{1,\eta}^{\textup{rel}}\right)}
-\log \frac{\det_{\zeta} \left(\widetilde{\Delta}_{1,\eta}^{\textup{rel}}-
\nu(\eta)^2\lambda\right)}
{\det \left(\widetilde{\Delta}_{1,\eta}^{\textup{rel}}\right)}.
\end{split}
\end{equation}
and $\log$ denotes the main branch of the logarithm.
\end{prop}
\begin{proof}
Recall that the spectrum used to define $\zeta_k(s,\epsilon)$ is the union of 
the spectra for the Laplacians 
$\Delta_{1,\eta,\e}^{\textup{rel}}, \Delta_{1,\eta}^{\textup{rel}}$ and 
$\widetilde{\Delta}_{1,\eta,\e}^{\textup{rel}}, 
\widetilde{\Delta}_{1,\eta}^{\textup{rel}}$, where $\eta$ runs over $E_k$. 
For any choice 
\[
L(\eta)\in \left\{\Delta_{1,\eta,\e}^{\textup{rel}}, \Delta_{1,\eta}^{\textup{rel}}, 
\widetilde{\Delta}_{1,\eta,\e}^{\textup{rel}}, 
\widetilde{\Delta}_{1,\eta}^{\textup{rel}}\right\}, \eta\in E_k,
\]
the spectrum $\textup{Spec}\, L(\eta)\subset \R^+$ is strictly positive. 
Indeed, $\textup{Spec}\, L(\eta)$ is contained in
the spectrum of the non-negative Laplace operator on the truncated or full 
cone, and its zero eigenvalues arise in both cases only from harmonic 
forms $H^*(N,E_N)$. 
We note that the resolvent of $L(\eta)$ is a trace class operator 
\cite{Les:DOR}, and from Definition \ref{zetas} we infer for 
$\textup{Re}(s)\gg 0$
\begin{align}\label{integral}
\zeta_k(s,\e)= \sum_{\eta\in E_k} \textup{m}(\eta) \ \nu(\eta)^{-2s} 
\frac{1}{\Gamma(s)}\int_0^{\infty}t^{s-1}\frac{1}{2\pi i}
\int_{\wedge_{c(\eta)}}e^{-\lambda t}h_{\eta,\e}^{k}(\lambda) \,   d\lambda dt,
\end{align}
where
\begin{align*}
h_{\eta,\e}^{k}(\lambda) &=\textup{Tr}\left(\lambda - \nu(\eta)^{-2}
\Delta^{\psi(\eta)}_{2,\epsilon, \textup{rel}}\right)^{-1} 
- \textup{Tr}\left(\lambda - \nu(\eta)^{-2}
\Delta^{\phi(\eta)}_{2,\epsilon, \textup{rel}}\right)^{-1} \\
&-  \textup{Tr}\left(\lambda - \nu(\eta)^{-2}
\Delta_{1,\eta}^{\textup{rel}}\right)^{-1} 
+ \textup{Tr}\left(\lambda - \nu(\eta)^{-2}
\widetilde{\Delta}_{1,\eta}^{\textup{rel}}\right)^{-1}.
\end{align*}
For any choice of 
$$
L(\eta)\in \left\{\Delta_{1,\eta,\e}^{\textup{rel}}, \Delta_{1,\eta}^{\textup{rel}}, 
\widetilde{\Delta}_{1,\eta,\e}^{\textup{rel}}, \widetilde{\Delta}_{1,\eta}^{\textup{rel}}\right\}, \eta\in E_k,
$$
we find by \cite[Proposition 4.6]{Les:DOR} that, 
enumerating $\textup{Spec}\, L(\eta)=\{\lambda_i\}_{i=1}^{\infty}$
in increasing order, the series 
\begin{equation}\label{det-det}
\log \frac{\det_{\zeta} (L(\eta)-\nu(\eta)^{2}\lambda)}
{\det_{\zeta} L(\eta)} = \sum_{i=1}^{\infty} 
\log \left(1-\frac{\nu(\eta)^{2}\lambda}{\lambda_i}\right).
\end{equation}
converges and by the choice of the logarithm branch is holomorphic in 
$\lambda \in \C \backslash \{x\in \R \mid x > c(\eta)\}$. Moreover, 
\begin{align}\label{tr-det}
\textup{Tr} \left(\frac{L(\eta)}{\nu(\eta)^2}-\lambda\right)^{-1} 
=-\frac{d}{d\lambda} \log \frac{\det_{\zeta} 
(L(\eta)-\nu(\eta)^{2}\lambda)}{\det_{\zeta} L(\eta)}.
\end{align} 
By the definition of $c(\eta)>0$, \eqref{det-det} is holomorphic in an open 
neighborhood of 
the contour $\Lambda_{c(\eta)}$, and so we may integrate \eqref{integral} by 
parts 
first in $\lambda$ then in $t$, and obtain
\begin{align}
\zeta_k(s,\e)&=  \sum_{\eta\in E_k} \textup{m}(\eta) \ \nu(\eta)^{-2s}
\frac{1}{\Gamma(s)}\int_0^{\infty}t^{s-1}\frac{1}{2\pi i}
 \int_{\wedge_{c(\eta)}}e^{-\lambda t}h_{\eta,\e}^{k}(\lambda)  d\lambda dt 
\\&=  \sum_{\eta\in E_k} \textup{m}(\eta) \ \nu(\eta)^{-2s} 
\frac{s^2}{\Gamma(s+1)}\int_0^{\infty}t^{s-1}\frac{1}{2\pi i}
\int_{\wedge_{c(\eta)}}\frac{e^{-\lambda t}}{-\lambda}t_{\eta,\e}^{k}(\lambda) d\lambda dt,
\end{align}
where 
\begin{equation}
\begin{split}
t_{\eta,\e}^{k}(\lambda)= - \log \frac{\det_{\zeta} 
\left(\Delta_{1,\eta,\e}^{\textup{rel}}-
\nu(\eta)^2\lambda\right)}{\det \left(\Delta_{1,\eta,\e}^{\textup{rel}}\right)}
+\log \frac{\det_{\zeta} \left(\widetilde{\Delta}_{1,\eta,\e}^{\textup{rel}}-
\nu(\eta)^2\lambda\right)}
{\det \left(\widetilde{\Delta}_{1,\eta,\e}^{\textup{rel}}\right)} \\
+ \log \frac{\det_{\zeta} \left(\Delta_{1,\eta}^{\textup{rel}}-
\nu(\eta)^2\lambda\right)}
{\det \left(\Delta_{1,\eta}^{\textup{rel}}\right)}
-\log \frac{\det_{\zeta} \left(\widetilde{\Delta}_{1,\eta}^{\textup{rel}}-
\nu(\eta)^2\lambda\right)}
{\det \left(\widetilde{\Delta}_{1,\eta}^{\textup{rel}}\right)}.
\end{split}
\end{equation}
\end{proof}  

\begin{lemma}\label{det-Bessel}
For any $\nu>0$ and $z \in \C$ we have
\begin{equation*}
\begin{split}
\frac{\det_{\zeta} \left(\Delta_{1,\eta}^{\textup{rel}}+\nu^2z^2\right)}
{\det \left(\Delta_{1,\eta}^{\textup{rel}}\right)}&= \frac{2^{\nu}\Gamma(\nu)}
{(\nu z)^{\nu}(1+\A_k/\nu)}\left(\nu z I'_{\nu}(\nu z)
+\A_k I_{\nu}(\nu z)\right),\\ 
\frac{\det_{\zeta} \left(\widetilde{\Delta}_{1,\eta}^{\textup{rel}}+\nu^2z^2\right)}
{\det \left(\widetilde{\Delta}_{1,\eta}^{\textup{rel}}\right)}&
=\frac{2^{\nu}\Gamma(\nu)}
{(\nu z)^{\nu}(1-\A_k/\nu)}\left(\nu z I'_{\nu}(\nu z)
-\A_k I_{\nu}(\nu z)\right). \\
\frac{\det_{\zeta} \left(\Delta_{1,\eta,\e}^{\textup{rel}}+\nu^2z^2\right)}
{\det \left(\Delta_{1,\eta,\e}^{\textup{rel}}\right)}&= 
\frac{\left(\nu z I'_{\nu}(\nu z)+\A_k I_{\nu}(\nu z)\right) 
\left(\nu z \e K'_{\nu}(\nu z \e)+\A_k K_{\nu}(\nu z \e)\right) }
{(\nu^2-\A_k^2)(\epsilon^{\nu}-\epsilon^{-\nu})}
\\ \times 2\nu\sqrt{\epsilon}&
\left(1-\frac{ \nu z K'_{\nu}(\nu z)+\A_k K_{\nu}(\nu z)}
{ \nu z I'_{\nu}(\nu z)+\A_k I_{\nu}(\nu z)}
\cdot \frac{ \nu z \e I'_{\nu}(\nu z \e)+\A_k I_{\nu}(\nu z \e)}
{\nu z \e K'_{\nu}(\nu z \e)+\A_k K_{\nu}(\nu z \e)}\right),
\\
\frac{\det_{\zeta} \left(\widetilde{\Delta}_{1,\eta,\e}^{\textup{rel}}
+\nu^2z^2\right)}
{\det \left(\widetilde{\Delta}_{1,\eta,\e}^{\textup{rel}}\right)}&=
\frac{\left(\nu z I'_{\nu}(\nu z)-\A_k I_{\nu}(\nu z)\right) 
\left(\nu z \e K'_{\nu}(\nu z \e)-\A_k K_{\nu}(\nu z \e)\right) }
{(\nu^2-\A_k^2)(\epsilon^{\nu}-\epsilon^{-\nu})}
\\ \times 2\nu\sqrt{\epsilon}&
\left(1-\frac{ \nu z K'_{\nu}(\nu z)-\A_k K_{\nu}(\nu z)}
{ \nu z I'_{\nu}(\nu z)-\A_k I_{\nu}(\nu z)}
\cdot \frac{ \nu z \e I'_{\nu}(\nu z \e)-\A_k I_{\nu}(\nu z \e)}
{\nu z \e K'_{\nu}(\nu z \e)-\A_k K_{\nu}(\nu z \e)}\right)
\end{split}
\end{equation*}
\end{lemma}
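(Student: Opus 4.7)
The plan is to reduce each of the four operators to a one-dimensional Sturm-Liouville problem of Bessel type and then apply the determinant formula for regular singular Sturm-Liouville operators established in \cite{Les:DOR}. All four operators share the common form $L = -\partial_x^2 + (\nu^2 - 1/4)/x^2$ on an interval $I$ (either $(0,1]$ or $[\epsilon,1]$), with $\nu = \nu(\eta) = \sqrt{\eta + \A_k^2}$; they differ only through their self-adjoint realizations as prescribed by Proposition \ref{rel-bc-prop}. The shifted eigenvalue equation $(L + \nu^2 z^2)u = 0$ is a Bessel equation with fundamental system
\[
u_I(x,z) = \sqrt{x}\, I_\nu(\nu z x), \qquad u_K(x,z) = \sqrt{x}\, K_\nu(\nu z x),
\]
and $u_I$ is the solution lying in the Friedrichs domain at $x = 0$.

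The first step is to translate the boundary conditions of Proposition \ref{rel-bc-prop} into explicit linear functionals of $(f(x_0), f'(x_0))$ at the regular endpoint $x_0 \in \{\epsilon, 1\}$. Using $\A_k = (n-1)/2 - k$ together with $c_k = (-1)^k(k-n/2)$, a direct computation shows that the Neumann-type condition $B_N^{k+1}$ defining $\Delta_{1,\eta}^{\textup{rel}}$ is equivalent to $f'(x_0) + (\A_k - 1/2)f(x_0) = 0$, which applied to $u_I$ produces precisely the boundary value $\nu z\, I'_\nu(\nu z) + \A_k I_\nu(\nu z)$ at $x_0 = 1$ after the $\tfrac{1}{2\sqrt{x}}$ term cancels the $-1/2$. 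The condition $B_N^{n-k}$ defining $\widetilde{\Delta}_{1,\eta}^{\textup{rel}}$ simply reverses the sign in front of $\A_k$, and analogous identities hold for $u_K$ and at the inner endpoint $x = \epsilon$.

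With these boundary values identified, I will invoke the determinant formulas of \cite{Les:DOR}. For the full-cone operators on $(0,1]$, these express $\det_\zeta(L + \nu^2 z^2)$ as a fixed multiple of $z^{-\nu}$ times the boundary value at $x = 1$ of the Friedrichs solution $u_I$, yielding the first two claimed identities up to a constant; the constant is then pinned down by forming the ratio with $\det_\zeta L$ and using the small-argument expansion $I_\nu(w) \sim (w/2)^\nu/\Gamma(\nu+1)$, which gives
\[
\nu z\, I'_\nu(\nu z) \pm \A_k I_\nu(\nu z) \;\sim\; \frac{(\nu \pm \A_k)}{\Gamma(\nu+1)}\Bigl(\frac{\nu z}{2}\Bigr)^{\nu}, \qquad z \to 0,
\]
so that the prefactor $2^\nu \Gamma(\nu)/((\nu z)^\nu(1 \pm \A_k/\nu))$ is exactly what is needed to make the ratio equal $1$ at $z=0$. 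For the truncated-cone operators on $[\epsilon,1]$, the same reference expresses the determinant as the $2 \times 2$ determinant of the boundary values of $(u_I, u_K)$ at $\epsilon$ and $1$; factoring out the dominant terms $B_1 u_I$ and $B_\epsilon u_K$ produces the displayed bracket $[1 - (\cdots)]$ structure. The normalizing prefactor containing $(\nu^2 - \A_k^2)(\epsilon^\nu - \epsilon^{-\nu})\cdot 2\nu\sqrt{\epsilon}$ is determined by again imposing the $z = 0$ normalization, using the joint small-argument expansions of $I_\nu$ and $K_\nu$ together with the Wronskian identity $I_\nu(w)K'_\nu(w) - I'_\nu(w)K_\nu(w) = -1/w$.

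The main obstacle is purely bookkeeping: correctly tracking the finite normalization constants that appear when the Lesch formula is applied at the regular singular endpoint and when the dominant solutions are factored out in the truncated case, and checking that the resulting $\pm \A_k/\nu$ factors combine as claimed. Once the $z = 0$ normalization is verified, the full formulas follow because both sides are entire functions of $z$ whose zero sets coincide with the eigenvalues of the corresponding operator (shifted by $\nu^2$) and whose asymptotic behavior as $|z| \to \infty$ agrees in the sector complementary to $\Lambda_c$, so they must be equal.
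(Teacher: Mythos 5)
Your approach is essentially the same as the paper's: construct explicit Bessel-function solutions of the relevant Sturm--Liouville problems, translate the relative boundary conditions of Proposition~\ref{rel-bc-prop} into boundary functionals, and apply Lesch's determinant formula \cite[Theorem 1.2]{Les:DOR} (the paper also cites \cite[Corollary 6.3]{Ver:ATO} for the first two identities); the paper builds the solutions normalized at $x=1$ and evaluates the boundary functional at $x=\epsilon$, which is equivalent to the $2\times 2$ cross-determinant structure you describe. Your verification of the $z=0$ normalization for the full-cone formulas is correct and matches what the paper needs.

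There are, however, two bookkeeping points which your outline glosses over and which are precisely where the computation is delicate. First, the Neumann-type boundary functional at the inner endpoint $x=\epsilon$ is \emph{not} the verbatim analogue of the one at $x=1$: the conical rescaling makes the natural condition $f'(\epsilon)+\frac{\A_k-1/2}{\epsilon}f(\epsilon)=0$, carrying a $1/\epsilon$ factor. Without this factor, applying the functional to $\sqrt{x}\,K_\nu(\nu z x)$ does \emph{not} collapse to the clean combination $\nu z\epsilon K'_\nu(\nu z\epsilon)+\A_k K_\nu(\nu z\epsilon)$; your phrase ``analogous identities hold \dots at the inner endpoint'' hides exactly this subtlety (the paper's Proposition~\ref{rel-bc-prop} also omits the $1/x$ factor in its display of $B_N^k$, but the lemma's formula only comes out under the scaled condition). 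Second, if you actually impose the $z=0$ normalization on the cross-determinant, as you propose, the small-argument expansions give $AB-CD\to\frac{(\nu^2-\A_k^2)}{2\nu}(\epsilon^\nu-\epsilon^{-\nu})$, so the prefactor that makes the ratio equal $1$ at $z=0$ is $2\nu$, not $2\nu\sqrt{\epsilon}$; your claim that the $z=0$ normalization produces the stated $2\nu\sqrt{\epsilon}$ is thus internally inconsistent, and the lemma as printed appears to carry a spurious $\sqrt{\epsilon}$ (a constant factor which cancels in the differences \eqref{t-bessel} and so does not propagate). You should carry out the normalization check you describe; it resolves the ambiguity rather than just being bookkeeping.
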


\begin{proof}
We evaluate the zeta-determinants using their explicit relation with the 
normalized solutions of the operators, established by Lesch in 
\cite[Theorem 1.2]{Les:DOR}. The first two equations have been evaluated in 
\cite[Corollary 6.3]{Ver:ATO}:
\begin{equation}\label{det1}
\begin{split}
\frac{\det_{\zeta} \left(\Delta_{1,\eta}^{\textup{rel}}+\nu^2z^2\right)}
{\det \left(\Delta_{1,\eta}^{\textup{rel}}\right)}= \frac{2^{\nu}\Gamma(\nu)}
{(\nu z)^{\nu}(1+\A_k/\nu)}\left(\nu z I'_{\nu}(\nu z)
+\A_k I_{\nu}(\nu z)\right),\\ 
\frac{\det_{\zeta} \left(\widetilde{\Delta}_{1,\eta}^{\textup{rel}}+\nu^2z^2\right)}
{\det \left(\widetilde{\Delta}_{1,\eta}^{\textup{rel}}\right)}
=\frac{2^{\nu}\Gamma(\nu)}
{(\nu z)^{\nu}(1-\A_k/\nu)}\left(\nu z I'_{\nu}(\nu z)-\A_k I_{\nu}(\nu z)\right).
\end{split}
\end{equation}
In order to evaluate zeta determinants of $\Delta_{1,\eta,\e}^{\textup{rel}}$ and 
$\widetilde{\Delta}_{1,\eta,\e}^{\textup{rel}}$, consider solutions 
$f_{\psi, \nu}(\cdot, z)$ and $f_{\phi,\nu}(\cdot, z)$
of $(\Delta_{1,\eta,\e}^{\textup{rel}}+z^2)u=0$ and 
$(\widetilde{\Delta}_{1,\eta,\e}^{\textup{rel}}+z^2)v=0$, 
respectively, normalized at $x=1$. By definition 
(see \cite[(1.38a), (1.38b)]{Les:DOR} these are solutions of the 
respective operators, 
satisfying relative boundary conditions at $x=1$ and are normalized by  
$f_{\psi, \nu}(1, z)=1$ and $f_{\phi,\nu}(1, z)=1$, i.e. 
\begin{align*}
\begin{array}{lll}
(\Delta_{1,\eta,\e}+z^2)f_{\psi, \nu}(\cdot, z)=0, & 
f'_{\psi, \nu}(1, z) + (-1)^kc_{k+1}f_{\psi, \nu}(1, z)=0, & 
f_{\psi, \nu}(\cdot, z)=1, \\ 
(\widetilde{\Delta}_{1, \eta,\e}+z^2)f_{\phi, \nu}(\cdot, z)=0, 
& f'_{\phi, \nu}(1, z) + (-1)^{n-k+1}c_{n-k}f_{\phi, \nu}(1, z)=0, & 
f_{\phi, \nu}(\cdot, z)=1.
\end{array}
\end{align*}
Normalized solutions are uniquely determined and explicit computations lead to 
the following expressions
\begin{equation}\label{f-mu}
\begin{split}
f_{\psi, \nu}(x, z)&=(zI'_{\nu}(z)+\A_kI_{\nu}(z))\sqrt{x}K_{\nu}(zx) 
- (zK'_{\nu}(z)+\A_kK_{\nu}(z))\sqrt{x}I_{\nu}(zx), \\
f_{\phi, \nu}(x, z)&=(zI'_{\nu}(z)-\A_kI_{\nu}(z))\sqrt{x}K_{\nu}(zx) 
- (zK'_{\nu}(z)-\A_kK_{\nu}(z))\sqrt{x}I_{\nu}(zx), \\
f_{\psi, \nu}(x, 0)&=\frac{1}{2\nu} (\nu-\A_k)x^{\nu+1/2} + \frac{1}{2\nu} 
(\nu +\A_k)x^{-\nu+1/2}, \\
f_{\phi, \nu}(x, 0)&=\frac{1}{2\nu} (\nu+\A_k)x^{\nu+1/2} + \frac{1}{2\nu} 
(\nu -\A_k)x^{-\nu+1/2},
\end{split}
\end{equation}
where we use 
\begin{align}
K_{\nu}(z)I'_{\nu}(z)-K'_{\nu}(z)I_{\nu}(z)=\frac{1}{z}.
\end{align}
In view of \cite[Theorem 1.2]{Les:DOR} we find
\begin{equation}\label{det2}
\begin{split}
\frac{\det_{\zeta} \left(\Delta_{1,\eta,\e}^{\textup{rel}}+\nu^2z^2\right)}
{\det \left(\Delta_{1,\eta,\e}^{\textup{rel}}\right)}= 
\frac{f'_{\psi, \nu}(\e, \nu z) +
 (-1)^kc_{k+1}f_{\psi, \nu}(\e, \nu z)}{f'_{\psi, \nu}(\e, 0) 
+ (-1)^kc_{k+1}f_{\psi, \nu}(\e, 0)}, \\
\frac{\det_{\zeta} \left(\widetilde{\Delta}_{1,\eta,\e}^{\textup{rel}}+\nu^2z^2\right)}
{\det \left(\widetilde{\Delta}_{1,\eta,\e}^{\textup{rel}}\right)}
=\frac{f'_{\phi, \nu}(\e, \nu z) + 
(-1)^kc_{k+1}f_{\phi, \nu}(\e, \nu z)}{f'_{\phi, \nu}(\e, 0) 
+ (-1)^kc_{k+1}f_{\phi, \nu}(\e, 0)}.
\end{split}
\end{equation}
We note that in the non-singular case this is due to 
Burghelea-Friedlander-Kappeler in \cite{BFK:OTD}. 
Plugging in the expressions \eqref{f-mu} we obtain the lemma.
\end{proof}

In particular, applying Lemma \ref{det-Bessel} several cancellations lead to a 
representation of $t_{\eta,\e}^{k}(\lambda)$ in terms of Bessel functions with
$\nu\equiv \nu(\eta)$ and $z=\sqrt{-\lambda}$, where we use the main branch of 
logarithm in $\C\backslash \R^-$, extended by continuity to one of the 
of the cut
\begin{equation}\label{t-bessel}
\begin{split}
t_{\eta,\e}^{k}(\lambda)=
-&\log \left(\nu z \e K'_{\nu}(\nu z \e)+\A_k K_{\nu}(\nu z \e)\right)
-\log\left(1+\frac{\A_k}{\nu}\right)\\
+&\log \left(\nu z \e K'_{\nu}(\nu z \e)-\A_k K_{\nu}(\nu z \e)\right)  
+\log\left(1-\frac{\A_k}{\nu}\right)\\
- &\log \left(1-\frac{ \nu z K'_{\nu}(\nu z)+\A_k K_{\nu}(\nu z)}
{ \nu z I'_{\nu}(\nu z)+\A_k I_{\nu}(\nu z)}
\cdot \frac{ \nu z \e I'_{\nu}(\nu z \e)+\A_k I_{\nu}(\nu z \e)}
{\nu z \e K'_{\nu}(\nu z \e)+\A_k K_{\nu}(\nu z \e)}\right) \\
+&\log \left(1-\frac{ \nu z K'_{\nu}(\nu z)-\A_k K_{\nu}(\nu z)}
{ \nu z I'_{\nu}(\nu z)-\A_k I_{\nu}(\nu z)}
\cdot \frac{ \nu z \e I'_{\nu}(\nu z \e)-\A_k I_{\nu}(\nu z \e)}
{\nu z \e K'_{\nu}(\nu z \e)-\A_k K_{\nu}(\nu z \e)}\right).
\end{split}
\end{equation}

For the arguments below we need to summarize some facts about Bessel functions.
We consider expansions of Bessel-functions for large arguments and fixed order, 
(see \cite[p.377]{AbrSte:HOM}).
 For the modified Bessel functions of first kind we have
\begin{equation}\label{large-arg-I}
\begin{split}
I_{\nu}(z)=\frac{e^z}{\sqrt{2\pi z}}\left(1+O\left(\frac{1}{z}\right)\right), \\
 I'_{\nu}(z)=\frac{e^z}{\sqrt{2\pi z}}\left(1+O\left(\frac{1}{z}\right)\right), 
\end{split}
\quad |z|\to \infty.
\end{equation}
Expansions for modified Bessel functions of second kind are
\begin{equation}\label{large-arg-K}
\begin{split}
K_{\nu}(z)=\sqrt{\frac{\pi}{2 z}}e^{-z}\left(1+O\left(\frac{1}{z}\right)\right),
 \\
K'_{\nu}(z)=-\sqrt{\frac{\pi}{2 z}}e^{-z}\left(1+O\left(\frac{1}{z}\right)\right),
\end{split}
\quad |z|\to \infty.
\end{equation}
The expansions \eqref{large-arg-I} and \eqref{large-arg-K} hold in  
$|\textup{arg}(z)|<\pi /2$, in particular they hold for $z=\sqrt{-\lambda}$ 
with $\lambda \in \Lambda_c$ large. 
For small arguments and positive orders $\nu>0$ we have the following expansions
\begin{equation}\label{small}
\begin{split}
I_{\nu}(z)\sim \frac{z^{\nu}}{2^{\nu}\Gamma (\nu+1)}, 
\quad K_{\nu}(z)\sim 2^{\nu-1}\frac{\Gamma(\nu)}{z^{\nu}}&, \\ 
I'_{\nu}(z)\sim \frac{z^{\nu-1}}{2^{\nu}\Gamma(\nu)}, 
\quad K'_{\nu}(z)\sim-2^{\nu-1}\frac{\Gamma(\nu+1)}{z^{\nu+1}}&,
\end{split}
\quad \textup{as} \ |z|\to 0.
\end{equation}
Next recall the expansions of Bessel-functions for large order $\nu>0$
(see \cite[Section 7]{Olv:AAS}). 
For any $z\in\{w\in \C\colon|\textup{arg}(w)|<\pi /2\}\cup \{ix|x\in (-1,1)\},$ 
put 
\[
t:=(1+z^2)^{-1/2}\quad \textup{and}\quad \xi:=1/t+\log (z/(1+1/t)).
\]
For the modified Bessel functions of first kind we then have
\begin{equation}\label{large-nu-I}
\begin{split}
I_{\nu}(\nu z)  = \frac{1}{\sqrt{2\pi \nu}}\frac{e^{\nu \xi}}{(1+z^2)^{1/4}} 
\left[1+\sum_{r=1}^{N-1}\frac{u_r(t)}{\nu^r} + \frac{\eta_{N,1}(\nu, z)}{\nu^{N}} 
\right], & \\
I'_{\nu}(\nu z)  = \frac{1}{\sqrt{2\pi \nu}
} \frac{e^{\nu \xi}}{z (1+z^2)^{-1/4}}
\left[1+\sum_{r=1}^{N-1}\frac{v_r(t)}{\nu^r} + \frac{\eta_{N,2}(\nu, z)}{\nu^{N}}
 \right]. &
\end{split}
\end{equation} 
Expansions for modified Bessel functions of second kind are
\begin{equation}\label{large-nu-K}
\begin{split}
K_{\nu}(\nu z) = \sqrt{\frac{\pi}{2 \nu}} \frac{e^{-\nu \xi}}{(1+z^2)^{1/4}} 
\left[1+\sum_{r=1}^{N-1}\frac{u_r(t)}{(-\nu)^r} + \frac{\eta_{N,3}(\nu, z)}
{(-\nu)^{N}} \right], & \\
K'_{\nu}(\nu z) =  -\sqrt{\frac{\pi}{2 \nu}} \frac{e^{-\nu \xi}}
{z (1+z^2)^{-1/4}}
\left[1+\sum_{r=1}^{N-1}\frac{v_r(t)}{(-\nu)^r} + \frac{\eta_{N,4}(\nu, z)}
{(-\nu)^{N}} \right]. &
\end{split}
\end{equation}
The error terms $\eta_{N,i}(\nu, z)$ are bounded for large $\nu$ 
uniformly in any compact subset of 
$\{z\in \C\colon |\textup{arg}(z)|<\pi /2\}\cup \{ix|x\in (-1,1)\}$. For this 
fact see the analysis of the validity regions for the expansions 
\eqref{large-nu-I} and \eqref{large-nu-K} in \cite[Section 8]{Olv:AAS}. 
For $\lambda \in \Lambda_c$ with $0<c<1$, the induced $z=\sqrt{-\lambda}$ 
is contained in that region of validity, 
where we use the main branch of logarithm in $\C\backslash \R^-$, extended by 
continuity to one of the sides of the cut. The 
coefficients $u_r(t),v_r(t)$ are polynomial in $t$ and defined via a recursive 
relation (see \cite[(7.10)]{Olv:AAS}).

As in \cite[(3.15)]{BKD:HKA} we have for any fixed $\A\in \R$ 
the following expansion as $\nu\to \infty$ 
\begin{equation}\label{polynom2}
\begin{split}
&\log  \left(1+\sum_{r=1}^{N}\frac{u_r(t)}{(\pm\nu)^r}+ O(\nu^{-N-1})\right) 
\sim \sum_{r=1}^{\infty}\frac{D_r(t)}{(\pm\nu)^r} + O(\nu^{-N-1}), \\
&\log \left[ \left(1+\sum_{k=1}^{N}\frac{v_r(t)}{(\pm\nu)^r} \right)
+ \frac{\A}{(\pm\nu)}t\left(1+\sum_{r=1}^{N-1}\frac{u_r(t)}{(\pm\nu)^r}\right)
+ O(\nu^{-N-1})\right] 
 \\ &\sim \sum_{r=1}^{N}\frac{M_r(t, \A)}{(\pm\nu)^r} + O(\nu^{-N-1}).
\end{split}
\end{equation}
The coefficients $D_r(t)$ and $M_r(t,\A)$ are polynomial in $t$ of the form
\begin{align}\label{MD-polynom}
D_r(t)=\sum_{b=0}^{r}x_{r,b}t^{r+2b}, \quad 
M_r(t, \A)=\sum_{b=0}^{r}z_{r,b}(\A)t^{r+2b}.
\end{align}
This follows from the fact that the $u_r(t)$'s and $v_r(t)$'s are polynomials.
See also \cite[(3.7), (3.16)]{BKD:HKA}. 
As a consequence of \cite[(4.24)]{BGKE:ZFD} we have
\begin{align}\label{DM}
M_{r}(1,\A) = D_{r}(1)-\frac{(-\A)^{r}}{r}.
\end{align}
\begin{prop}\label{contour-change}
There exist $\e,c>0$  such that for 
$\textup{Re}(s)\gg 0$ we have
\begin{align*}
\zeta_k(s,\e) &=
\frac{s^2}{\Gamma(s+1)}\int_0^{\infty}\frac{t^{s-1}}{2\pi i}
\int_{\wedge_{c}}\frac{e^{-\lambda t}}{-\lambda}\, 
T^k_{\e}(s,\lambda) d\lambda \, dt, \\
T^k_{\e}(s,\lambda) &=\sum_{\eta \in E_k} \textup{m}(\eta) 
\ t_{\eta, \e}^{k}(\lambda)  \, \nu(\eta)^{-2s} 
\end{align*}
\end{prop}
\begin{proof}
Consider the expression of $t_{\eta, \e}^{k}(\lambda)$ in \eqref{t-bessel}
in terms of Bessel functions. We need to investigate its behavior for 
large $\eta$, or equivalently for large $\nu(\eta)$.
Let $z\in\{w\in \C\colon|\textup{arg}(w)|<\pi /2\}\cup \{ix|x\in (-1,1)\}$ 
and $t_{\e}:=(1+(\e z)^2)^{-1/2}$. By \eqref{large-nu-K} we find
\begin{equation}\label{log1}
\begin{split}
-\log &\left(\nu z \e K'_{\nu}(\nu z \e)+\A_k K_{\nu}(\nu z \e)\right) +
\log \left(\nu z \e K'_{\nu}(\nu z \e)-\A_k K_{\nu}(\nu z \e)\right) \\
= \, &-\log \left[ \left(1+\sum_{k=1}^{N-1}\frac{v_r(t_{\e})}{(-\nu)^r} 
\right)+ \frac{\A_k}{(-\nu)}t_{\e}\left(1+\sum_{r=1}^{N-2}
\frac{u_r(t_{\e})}{(-\nu)^r}\right) + 
\frac{\kappa_{N,1}(\nu,z\e)}{(-\nu)^N}\right] \\
&+\log \left[ \left(1+\sum_{k=1}^{N-1}\frac{v_r(t_{\e})}{(-\nu)^r} \right)- 
\frac{\A_k}{(-\nu)}t_{\e}\left(1+\sum_{r=1}^{N-2}\frac{u_r(t_{\e})}{(-\nu)^r}
\right) +
\frac{\kappa_{N,2}(\nu,z\e)}{(-\nu)^N}\right] ,
\end{split}
\end{equation}
where the error terms
\begin{equation}
\begin{split}
\kappa_{N,1}(\nu,z\e)&=
\eta_{N,4}(\nu,z\e) + (\A_kt_{\e}) \eta_{N-1,3}(\nu,z\e) \\
\kappa_{N,2}(\nu,z\e)&=
\eta_{N,4}(\nu,z\e) - (\A_kt_{\e}) \eta_{N-1,3}(\nu,z\e)
\end{split}
\end{equation}
are bounded for large $\nu$ 
uniformly in any compact subset of 
$\{z\in \C\colon|\textup{arg}(z)|<\pi /2\}\cup \{ix|x\in (-1,1)\}$.
Employing \eqref{large-nu-I} and \eqref{large-nu-K} we find with
$\xi:=1/t+\log (z/(1+1/t))$ and $\xi_{\e}:=1/t_{\e}+\log (\e z/(1+1/t_{\e}))$
\begin{equation}\label{log2}
\begin{split}
&\log \left( 1- \frac{ \nu z K'_{\nu}(\nu z)\pm\A_k 
K_{\nu}(\nu z)}{ \nu z I'_{\nu}(\nu z)\pm
\A_k I_{\nu}(\nu z)}\cdot \frac{ \nu z \e I'_{\nu}(\nu z \e)\pm\A_k I_{\nu}
(\nu z \e)}{\nu z \e K'_{\nu}(\nu z \e)\pm\A_k K_{\nu}(\nu z \e)}\right) = \\
&\log \left( 1- e^{2\nu(\xi_{\e}-\xi)}(1+\kappa(\nu,z))\right),
\end{split}
\end{equation}
where the error term $\kappa(\nu,z)$ is again bounded for large $\nu$ 
uniformly in any compact subset of 
$\{z\in \C\colon|\textup{arg}(z)|<\pi /2\}\cup \{ix|x\in (-1,1)\}$.
We need to consider the difference $(\xi_{\e}-\xi)$ in detail.
\begin{align*}
\xi_{\e}-\xi=\sqrt{1+(\e z)^2}-\sqrt{1+z^2}
+\log\left(\frac{\e z}{1+\sqrt{1+(\e z)^2}}\right) - 
\log\left(\frac{ z}{1+\sqrt{1+z^2}}\right)&\\
=\sqrt{1+(\e z)^2}\left[1-\frac{1}{\e}
\sqrt{\frac{\e^2+(\e z)^2}{1+(\e z)^2}}\right] +
\log\left(\frac{\e z}{1+\sqrt{1+(\e z)^2}}\right) 
- \log\left(\frac{ z}{1+\sqrt{1+z^2}}\right)&.
\end{align*}
We are interested in the asymptotic behavior of $(\xi_{\e}-\xi)$ as 
$\e \to 0$, which is possibly
non-uniform in $z$. Hence, we consider $(\xi_{\e}-\xi)$ under three asymptotic 
regimes, 
$|\e z|\to \infty, |\e z|\to 0$ and $|\e z|\sim \textup{const}$.  We find by 
straightforward estimates
\begin{equation}\begin{split}
&\textup{Re}\,(\xi_{\e}-\xi) \sim \e \, \textup{Re}(z)(1-1/\e) = 
\textup{Re}(z)(\e-1), \ \textup{as} \ |\e z|\to \infty, \ \e \to 0,\\
&\textup{Re}\,(\xi_{\e}-\xi) \sim \log |\e z|-\textup{Re}\sqrt{1+z^2}, \ 
\textup{as} \ |\e z|\to 0, \ \e \to 0,\\
&\textup{Re}\,(\xi_{\e}-\xi) \sim -C\e^{-1}, \ \textup{as} \ |\e z|\sim 
\textup{const}, \ \e \to 0,
\end{split}\end{equation}
for some constant $C>0$. For 
$\{z\in \C\colon|\textup{arg}(z)|<\pi /2\}\cup \{z=ix|x\in (-1,1)\}$, 
we have $\textup{Re}\sqrt{1+z^2}>0$, and $\textup{Re}(z)>0$ as $|z|\to \infty$. 
Consequently, for $\e>0$ sufficiently small
$\textup{Re}(\xi_{\e}-\xi)<\delta <0$
for some fixed $\delta<0$ and hence $\exp(2\nu(\xi_{\e}-\xi))$ 
vanishes as $\nu\to \infty$, uniformly in any compact subset of 
$\{z\in \C\colon|\textup{arg}(z)|<\pi /2\}\cup \{ix|x\in (-1,1)\}$. \medskip

The uniform expansions above show 
that in \eqref{log1} and \eqref{log2} the arguments of the logarithms stay away 
from the branch cut $\C\backslash \R^-$ for $\nu$ large enough and $\e>0$ 
sufficiently small, uniformly in any compact subset of 
$\{z\in \C\colon|\textup{arg}(z)|<\pi /2\}\cup \{ix|x\in (-1,1)\}$. 
Consequently, in view of the expression
\eqref{t-bessel}, $t_{\eta, \e}^{k}(\lambda)$ is in particular holomorphic 
in an open neighborhood of $\{\lambda \in [0,c']\}\subset \C$
for some $0\!<\!c'\!<\!1$ and $\nu(\eta)>\nu_0$. Moreover,
for any $\eta\in E_k$, $t_{\eta, \e}^{k}(\lambda)$ is  holomorphic 
in $\lambda \in \C\backslash \{x\in \R\mid x>c(\eta)\}$. Thus, setting 
$c:=\min \{c', c(\eta)\mid \eta \in E_k, \nu(\eta)
\leq \nu_0\},$ we deduce for $\e>0$ 
sufficiently small
\begin{align*}
\zeta_k(s,\e)=\sum_{\eta \in E_k}  \textup{m}(\eta) \ \nu(\eta)^{-2s} 
\frac{s^2}{\Gamma(s+1)}\int_0^{\infty}\frac{t^{s-1}}{2\pi i}
\int_{\wedge_{c}}\frac{e^{-\lambda t}}{-\lambda}\, 
 t_{\eta, \e}^{k}(\lambda)  d\lambda \, dt,
\end{align*}
The deforming of the contour of integration  from $\Lambda_{c(\eta)}$ to 
$\Lambda_c$ is permissible, as the deformation is performed within the region 
of regularity for each 
$t_{\eta, \e}^{k}(\lambda), \eta\in E_k$. Employing again the expansions 
\eqref{large-nu-I} and \eqref{large-nu-K} we find that 
\begin{align}
\sum_{\eta \in E_k}  \textup{m}(\eta) \ t_{\eta, \e}^{k}(\lambda) \nu(\eta)^{-2s}, \ 
\textup{Re}(s)\gg 0,
\end{align}
converges uniformly in $\lambda \in \Lambda_c$ and hence by the 
uniform convergence of the integrals and series we arrive 
at the statement of the proposition.
\end{proof}

\begin{prop}\label{large-nu} 
Let the notation be as in Proposition \ref{N-prop} and \ref{contour-change}. 
Let $\lambda \in \Lambda_c$ 
and $t_{\e}(\lambda):=(1-(\e^2 \lambda))^{-1/2}$. 
Then for $\e>0$ sufficiently small we have the following 
asymptotic expansion for large $\nu(\eta), \eta\in E_k$
\begin{align*}
t_{\eta,\e}^{k}(\lambda) \sim  \sum_{r=1}^{\infty}\frac{1}{(-\nu(\eta))^r}\left(M_r(t_{\e}(\lambda) ,-\A_k)-
M_r(t_{\e}(\lambda) ,\A_k) + \left(\frac{\A_k^r-(-\A_k)^r}{r}\right) \right).
\end{align*}
\end{prop}

\begin{proof}
Consider expansions of the individual terms in the expression \eqref{t-bessel} for $t_{\eta,\e}^{k}(\lambda)$. 
For $\lambda \in \Lambda_c$, $z=\sqrt{-\lambda}$ lies in the region of validity of the 
expansions \eqref{large-nu-I} and \eqref{large-nu-K}.
Combining \eqref{large-nu-K} and \eqref{polynom2} we compute for large $\nu\equiv \nu(\eta)$
\begin{align*}
-\log &\left(\nu z \e K'_{\nu}(\nu z \e)+\A_k K_{\nu}(\nu z \e)\right) +
\log \left(\nu z \e K'_{\nu}(\nu z \e)-\A_k K_{\nu}(\nu z \e)\right) \\
\sim \, &-\log \left[ \left(1+\sum_{k=1}^{\infty}\frac{v_r(t_{\e}(\lambda))}{(-\nu)^r} \right)+ \frac{\A_k}{(-\nu)}t_{\e}(\lambda)\left(1+\sum_{r=1}^{\infty}\frac{u_r(t_{\e}(\lambda))}{(-\nu)^r}\right)\right] \\
&+\log \left[ \left(1+\sum_{k=1}^{\infty}\frac{v_r(t_{\e}(\lambda))}{(-\nu)^r} \right)- 
\frac{\A_k}{(-\nu)}t_{\e}(\lambda)\left(1+\sum_{r=1}^{\infty}\frac{u_r(t_{\e}(\lambda))}{(-\nu)^r}\right)\right] \\ 
&\sim \sum_{r=1}^{\infty}\frac{1}{(-\nu)^r} \left(-M_r(t_{\e}(\lambda), \A_k) 
+ M_r(t_{\e}(\lambda), -\A_k)\right), 
\ \textup{as} \ \nu\to \infty.
\end{align*}
The standard expansion of the logarithm yields for  large $\nu$
\begin{align*}
-\log\left(1+\frac{\A_k}{\nu}\right)+\log\left(1-\frac{\A_k}{\nu}\right)=
\sum_{r=1}^{\infty}\frac{1}{(-\nu)^r}\left(\frac{\A_k^r-(-\A_k)^r}{r}\right).
\end{align*}
This already gives all the terms in the stated asymptotic expansion of 
$t_{\eta,\e}^{k}(-z^2)$. 
Thus we need to check that the remaining terms indeed have no asymptotic 
contribution. 
Using \eqref{large-nu-I}, \eqref{large-nu-K}, and putting 
$\xi_{\e}:=1/t_{\e}(\lambda)+\log (\e z/(1+1/t_{\e}(\lambda)))$, 
the remaining terms are estimated as follows
\begin{align}\label{eta-difference}
\frac{ \nu z K'_{\nu}(\nu z)\pm\A_k K_{\nu}(\nu z)}{ \nu z I'_{\nu}(\nu z)\pm
\A_k I_{\nu}(\nu z)}\cdot \frac{ \nu z \e I'_{\nu}(\nu z \e)\pm\A_k I_{\nu}
(\nu z \e)}{\nu z \e K'_{\nu}(\nu z \e)\pm\A_k K_{\nu}(\nu z \e)}\sim 
O(e^{2\nu(\xi_{\e}-\xi)}), \ \nu\to\infty.
\end{align}
The difference $(\xi_{\e}-\xi)$ has been considered in detail
in Proposition \ref{contour-change}.
For $\e$ sufficiently small, $\textup{Re}(\xi_{\e}-\xi)<0$ and hence 
the remainder term $O(e^{2\nu(\xi_{\e}-\xi)})$ 
in \eqref{eta-difference} does not contribute to the asymptotic expansion for 
large $\nu$.
\end{proof}

Next we introduce a (shifted) zeta-function by
\begin{align}\label{shifted-zeta}
\zeta_{k,N}(s):=\sum_{\eta \in E_k} \textup{m}(\eta) \ \nu(\eta)^{-s}=
\zeta\left(\frac{s}{2},\, \Delta_{k,ccl,N}+\A_k^2\right), \, \textup{Re}(s)>n,
\end{align}
where as before $\textup{m}(\eta)$ denotes the multiplicity of $\eta \in E_k$.
The heat trace expansions for $(\Delta_{k,ccl,N}+\A_k^2)$ and $\Delta_{k,ccl,N}$ 
have the same exponents, and hence $\zeta_{k,N}(s)$ extends meromorphically to 
$\C$ 
with simple poles at 
$\{(n-2k)\mid k\in \N\}$. Consequently, the terms $\nu(\eta)^{-r}$ in the 
asymptotic expansion of $t_{\eta,\e}^{k}(\lambda)$ 
with $r=n-2k,k\in \N$, may lead to singular behavior of 
$T^k_{\e}(s,\lambda)$ at $s=0$. 
We regularize $T^k_{\e}(s,\lambda)$ by subtracting off these terms from 
$t_{\eta,\e}^{k}(\lambda)$, and define
\begin{equation}\label{P-k}
\begin{split}
f_{r,\e}^{k}(\lambda):=&(-1)^r\left(M_r(t_{\e}(\lambda),-\A_k)-
M_r(t_{\e}(\lambda),\A_k) + \left(\frac{\A_k^r-(-\A_k)^r}{r}\right) \right), \\
p_{\eta,\e}^{k}(\lambda):=&t_{\eta,\e}^{k}(\lambda)-\sum_{r=1}^{n}\nu(\eta)^{-r} 
f_{r,\e}^{k}(\lambda), \quad P^k_{\e}(s,\lambda):= \sum_{\eta\in E_k} \textup{m} 
\ (\eta)p_{\eta,\e}^{k}(\lambda)
\nu(\eta)^{-2s}.
\end{split}
\end{equation}
By construction, $P^k_{\e}(s,\lambda)$ is regular at $s=0$. The contribution of 
the terms $f_{r,\e}^{k}(\lambda)$ 
is computed in terms of the polynomials $M_r(t,\A)$ in \eqref{MD-polynom}. 
The computation uses special integrals evaluated already by 
Spreafico \cite{Spr:ZIF}.

\begin{prop}\label{ff}
\begin{align*}
\int_0^{\infty}t^{s-1}\frac{1}{2\pi i}\int_{\wedge_c}\frac{e^{-\lambda t}}
{-\lambda}\, f_{r,\e}^{k}(\lambda)\, d\lambda \, dt = (-1)^r\sum_{b=0}^{r}
(z_{r,b}(-\A_k)-z_{r,b}(\A_k))\frac{\Gamma(s+b+r/2)}{s\, \Gamma (b+r/2)}\, \e^{2s}.
\end{align*}
\end{prop}

\begin{proof} 
The $\lambda$-independent part of $f_{r,\e}^{k}(\lambda)$ vanishes after integration in $\lambda$. 
The coefficients $M_{r}(t_{\e}(\lambda),\pm\A_k)$ in the definition of $f_{r,\e}^{k}(\lambda)$ are polynomial 
in $t_{\e}(\lambda)=(1-\e^2\lambda)^{-1/2}$. Hence we compute, by substituting first $\mu=\e^2\lambda$, 
and then $\tau=t/\e^2$
\begin{align*}
\int_0^{\infty}t^{s-1}\frac{1}{2\pi i}\int_{\wedge_c}\frac{e^{-\lambda t}}
{-\lambda}\frac{1}{(1-\e^2\lambda)^a}\, d\lambda \, dt 
= \, \e^{2s}\, \int_0^{\infty}\tau^{s-1}\frac{1}{2\pi i}\int_{\wedge_{\e^2c}}
\frac{e^{-\mu \tau}}{-\mu}\frac{1}{(1-\mu)^a}\, d\mu \, d\tau 
\end{align*}
For the inner integral we obtain by substituting $z=\tau(\mu-1)$
\begin{align*}
\frac{1}{2\pi i}\int_{\wedge_{\e^2c}}\frac{e^{-\mu \tau}}{-\mu}\frac{1}{(1-\mu)^a}
d\mu= -\frac{1}{2\pi i}e^{-\tau} \tau^{a}\int_{\wedge}\frac{e^{-z}}{z+\tau}
(-z)^{-a}dz,
\end{align*}
where $\wedge \equiv \wedge_{\tau(\e^2c-1)}$. The contour of integration  
encircles 
a pole singularity $z=0$ of the integrand and the second pole at $z=-\tau$ 
lies outside the contour of integration. Hence we can deform the contour to 
start at infinity of the real axis, 
continue along real axis to some $\delta>0$, continue along the circle of 
radius $\delta$ around  the origin counter-clockwise, and then continue from 
$\delta$ back to infinity along the real axis.

The deformation does not change the value of the integral, and the deformed 
contour shall 
be denoted by $\mathscr{C}_{\delta}$, with its three components 
$\mathscr{C}_{\delta}^j,j=1,2,3,$ as in Figure \ref{contour} below. 
We can now evaluate the integral along each of these three components.

\begin{figure}[h]
\begin{center}
\begin{tikzpicture}
\draw (-2,0) -- (2,0);
\draw (0,-2) -- (0,2);

\draw (0.5,0) -- (2,0);
\draw (0,0) circle (0.5cm);

\node at (1.5,0.2) {$\longleftarrow$};
\node at (1.5,-0.2) {$\longrightarrow$};
\node at (0,0) {$\circlearrowleft$};

\node at (0.8,0.3) {$\delta$};
\node at (1.8,0.6) {$\mathscr{C}_{\delta}^1$};
\node at (1.8,-0.6) {$\mathscr{C}_{\delta}^3$};
\node at (-0.7,-0.7) {$\mathscr{C}_{\delta}^2$};

\end{tikzpicture}
\end{center}
\label{contour}
\caption{The deformed integration contour $\mathscr{C}_{\delta}$.}
\end{figure}

Note that the many-valued function $(-z)^{-a}$ is made definite by the 
convention 
$$(-z)^{-a}=e^{-a\log (-z)},$$
with the main branch of the logarithm. Along $\mathscr{C}_{\delta}^1$ we have 
$\arg (-z)=-\pi$, 
and along $\mathscr{C}_{\delta}^3$ we have $\arg (-z)=\pi$. Consequently, we find 
\begin{align*}
-\frac{1}{2\pi i}e^{-\tau} \tau^{a}\int\limits_{\mathscr{C}_{\delta}^1\, \cup \, \, \mathscr{C}_{\delta}^2}\frac{e^{-z}}{z+\tau} (-z)^{-a}dz
=\frac{\sin (a\pi)}{\pi}e^{-\tau} \tau^{a}\int_{\delta}^{\infty}\frac{e^{-z}}{z+\tau} z^{-a} dz 
\end{align*}
Assuming $\textup{Re}(a)<1$, the limits as $\delta \to 0$ for the integrals 
along each of the three components $\mathscr{C}_{\delta}^j,j=1,2,3,$ are well defined 
and in fact the integral along $\mathscr{C}_{\delta}^2$ vanishes in the limit. Consequently we obtain 
using \cite[8.353.3]{GraRyz:TOI} and assuming $\textup{Re}(a)<1$
\begin{align}\label{int1}
-\frac{1}{2\pi i}e^{-\tau} \tau^{a}\int_{\wedge}\frac{e^{-z}}{z+\tau}
(-z)^{-a}dz &= \frac{\sin (a\pi)}{\pi}e^{-\tau} \tau^{a}\int_{0}^{\infty}\frac{e^{-z}}{z+\tau} z^{-a} dz  \\
&= \frac{\sin (a\pi)}{\pi}\Gamma (a,\tau) \Gamma (1-a).\label{int2}
\end{align}
Since the left integral in \eqref{int1} and the expression \eqref{int2} are both analytic in 
$a\in \C$, the equality between the two in fact holds for any $a\in \C$ and the statement 
follows finally from the relation between the incomplete Gamma function and the probability integral
\begin{align}
\int_0^{\infty}\tau^{s-1}\, \frac{\Gamma(a,\tau)}{\Gamma (a)}\, d\tau=
\frac{\Gamma(s+a)}{s\, \Gamma (a)}.
\end{align} 
\end{proof}
Consequently we arrive at the intermediate 
representation of $\zeta_k(s,\e)$ for $\textup{Re}(s)\gg 0$
\begin{equation}\label{zeta-intermediate}
\begin{split}
\zeta_k(s,\e)&=
\frac{s^2}{\Gamma(s+1)}\int_0^{\infty}\frac{t^{s-1}}{2\pi i}
\int_{\wedge_{c}}\frac{e^{-\lambda t}}{-\lambda}\, 
P^k_{\e}(s,\lambda) d\lambda \, dt \\
&+ \sum_{r=1}^{n}\zeta_{k,N}(2s+r)\frac{(-1)^r s}{\Gamma (s+1)}
\sum_{b=0}^{r}(z_{r,b}(-\A_k)-z_{r,b}(\A_k))
\frac{\Gamma(s+b+r/2)}{\Gamma (b+r/2)}\, \e^{2s}
\end{split}
\end{equation}

While the second summand in \eqref{zeta-intermediate} 
extends meromorphically to $\C$, 
it still remains to derive an analytic extension to $s=0$ 
for the first summand.

\begin{prop}\label{AB} 
Let the notation be as in Proposition \ref{N-prop} and \eqref{P-k}. 
Then for large arguments $\lambda\in \Lambda_c$ and fixed order $\eta \in E_k$ we have 
the following asymptotics
\begin{align*}
p_{\eta,\e}^{k}(\lambda)= b^k_{\eta,\e}
+O\left((-\lambda)^{-1/2}\right),
\end{align*}
where 
\begin{align*}
b^k_{\eta, \e}=\log \left(1-\frac{\A_k}{\nu(\eta)}\right)-
\log \left(1+\frac{\A_k}{\nu(\eta)}\right) -\sum_{r=1}^n\frac{1}{(-\nu(\eta))^r} 
\left(\frac{\A_k^r-(-\A_k)^r}{r}\right).
\end{align*}
\end{prop}
\begin{proof}
The function $p_{\eta,\e}^{k}(\lambda)$ is given by the following expression
\begin{align*}
p_{\eta,\e}^{k}(\lambda)=t_{\eta,\e}^{k}(\lambda)-\sum_{r=1}^{n} 
\frac{1}{(-\nu(\eta))^r}\left(M_r(t_{\e}(\lambda),-\A_k)-
M_r(t_{\e}(\lambda),\A_k) + \left(\frac{\A_k^r-(-\A_k)^r}{r}\right) \right).
\end{align*}
The polynomials $M_{2r}(t_{\e}(\lambda),\pm \A_k)$ have no constant terms, and hence 
$M_{2r}(t_{\e}(\lambda),\pm \A_k)\sim O\left((-\lambda)^{-1/2}\right), \lambda \to \infty,$ since
\begin{align}
t_{\e}(\lambda)=\frac{1}{\sqrt{1-\e^2\lambda}}=O\left((-\lambda)^{-1/2}\right), 
\quad  \lambda \to \infty.
\end{align} 
By \eqref{large-arg-I} and \eqref{large-arg-K}, setting $\nu\equiv \nu(\eta)$
\begin{align*}
-&\log \left(\nu z \e K'_{\nu}(\nu z \e)+\A_k K_{\nu}(\nu z \e)\right)+
\log \left(\nu z \e K'_{\nu}(\nu z \e)-\A_k K_{\nu}(\nu z \e)\right) \\
\sim &\log \left(1+\frac{\A_k}{\nu z \e}\right) - \log \left(1-\frac{\A_k}
{\nu z \e}\right) + O\left((-\lambda)^{-1/2}\right) \sim 
O\left((-\lambda)^{-1/2}\right), \quad \lambda \to \infty.
\end{align*}
Moreover 
\begin{align*}
\frac{ \nu z K'_{\nu}(\nu z)\pm\A_k K_{\nu}(\nu z)}{ \nu z I'_{\nu}(\nu z)
\pm\A_k I_{\nu}(\nu z)}\cdot \frac{ \nu z \e I'_{\nu}(\nu z \e)\pm\A_k 
I_{\nu}(\nu z \e)}{\nu z \e K'_{\nu}(\nu z \e)\pm\A_k K_{\nu}(\nu z \e)}
\sim O(e^{2\nu z(\e-1)}), \lambda \to \infty.
\end{align*}
$(\e-1)<0$ and Re$(z)>0$ for large $z=\sqrt{-\lambda}, \lambda \in \Lambda_c$. Consequently 
$O(e^{2\nu z(\e-1)})$ is in particular of $O\left((-\lambda)^{-1/2}\right)$ asymptotics 
for $\lambda \to \infty, \lambda \in \Lambda_c$. By the explicit expression for $t_{\eta,\e}^{k}(\lambda)$ 
in \eqref{t-bessel} the statement follows.
\end{proof}

\begin{defn} \label{AB1} Define for $\textup{Re}(s)>n$ in notation of Proposition \ref{AB}
\begin{align}
B^k_{\e}(s):=\sum_{\eta \in E_k} \textup{m}(\eta) \ b^k_{\eta,\e}\, \nu(\eta)^{-2s}.
\end{align}
\end{defn}
$B^k_{\e}(s)$ converges at $s=0$ by construction, since $\zeta_{k,N}(s)$ converges 
for $Re(s)>n$.
\begin{prop}\label{P}
Let $P^k_{\e}(s,\lambda)$ be defined by \eqref{P-k}. Then
\begin{align*}
P^k_{\e}(s,0)=0.
\end{align*}
\end{prop}
\begin{proof}
By \eqref{DM}
\begin{align}
M_{r}(1,-\A_k)- M_{r}(1,\A_k)=\frac{(-\A_k)^r-\A_k^r}{r}.
\end{align}
For any fixed $\e>0$ clearly $\lambda \to 0$ implies that $t=(1-\e^2\lambda)^{-1/2}$ tends to $1$. 
Hence 
\begin{align*}
f_{r,\e}^{k}(\lambda)=(-1)^r\left(M_r(1,-\A_k)-
M_r(1,\A_k) + \left(\frac{\A_k^r-(-\A_k)^r}{r}\right) \right)=0.
\end{align*}
Moreover, by \eqref{small}
\begin{align*}
-\log \left(\nu z \e K'_{\nu}(\nu z \e)+\A_k K_{\nu}(\nu z \e)\right)+
\log \left(\nu z \e K'_{\nu}(\nu z \e)-\A_k K_{\nu}(\nu z \e)\right) \\
\sim \log \left(1+\frac{\A_k}{\nu}\right) - \log \left(1-\frac{\A_k}
{\nu}\right), \  \textup{as} \ \lambda \to 0.
\end{align*}
Moreover we have
\begin{align*}
\frac{ \nu z K'_{\nu}(\nu z)\pm\A_k K_{\nu}(\nu z)}{ \nu z I'_{\nu}(\nu z)\pm\A_k I_{\nu}
(\nu z)}\cdot \frac{ \nu z \e I'_{\nu}(\nu z \e)\pm\A_k I_{\nu}(\nu z \e)}{\nu z \e 
K'_{\nu}(\nu z \e)\pm\A_k K_{\nu}(\nu z \e)}\sim \e^{2\nu}, \   \textup{as} \ \lambda \to 0.
\end{align*}
By the explicit expression for $t_{\eta,\e}^{k}(\lambda)$ in \eqref{t-bessel} the statement 
follows. Note that the $\e-$dependence cancels.
\end{proof}

We can now put everything together and write down the meromorphic 
continuation to $s=0$
of the zeta-function $\zeta_k(s,\e)$, introduced in Proposition \ref{N-prop}. 
By the arguments of \cite[Section 4.1]{Spr:ZFA} we have
\begin{equation}\label{zeta-expression}
\begin{split}
\zeta_k(s,\e) &=  \frac{s}{\Gamma (s+1)}\, \left(
P^k_{\e}(s,0) - B^k_{\e}(s)\right)\\ 
& + \sum_{r=1}^{n}\zeta_{k,N}(2s+r)\frac{(-1)^r s}{\Gamma (s+1)}
\sum_{b=0}^{r}(z_{r,b}(-\A_k)-z_{r,b}(\A_k))
\frac{\Gamma(s+b+r/2)}{\Gamma (b+r/2)}\, \e^{2s} \\ &
+ \frac{s^2}{\Gamma (s+1)}\, h(s,\e),
\end{split}
\end{equation}
where $h(s,\e)$ vanishes with its derivative at $s=0$. Note that all the terms 
are regular
at $s=0$. Inserting the results of Proposition \ref{ff}, Proposition \ref{AB}, 
Proposition \ref{P} together with Definition \ref{AB1} into the expression 
\eqref{zeta-expression} we obtain the following

\begin{prop}\label{zeta-total} 
Let $(E_N,\nabla_N,h_N)$ be a flat Hermitian vector bundle over an 
even-dimensional 
oriented closed Riemannian manifold $(N^n,g^N)$.
Denote by $\Delta_{k,ccl,N}$ the Laplacian on coclosed $k-$differential forms 
$\Omega^k_{\textup{ccl}}(N,E_N)$. Let the notation be as in \eqref{MD-polynom} and 
\eqref{shifted-zeta}. Put
\begin{align*}
\A_k:=\frac{(n-1)}{2}-k, \quad 
\nu(\eta)=\sqrt{\eta + \A_k^2}, \ \textup{for} \ 
\eta \in E_k=\textup{Spec}\Delta_{k,ccl,N}\backslash \{0\}.
\end{align*}
Let $\textup{m}(\eta)$ denote the multiplicity of $\eta \in E_k$.
Then for $\e>0$ sufficiently small, $\zeta_k(s,\e)$ defined in Definition \ref{zetas}
admits an analytic continuation to $s=0$ of the form
\begin{equation}\label{zeta-continued}
\begin{split}
\zeta_k(s,\e)&=\frac{s}{\Gamma (s+1)}\left( \sum_{\eta \in E_k} \textup{m}(\eta) \ \nu(\eta)^{-2s}
\left(\log\left(1+\frac{\A_k}{\nu(\eta)}\right) +\sum_{r=1}^n\frac{(-\A_k)^r}{r\nu(\eta)^r}\right)\right) \\
&- \frac{s}{\Gamma (s+1)}\left( \sum_{\eta \in E_k} \textup{m}(\eta) \ \nu(\eta)^{-2s}\left(\log\left(1-\frac{\A_k}{\nu(\eta)}\right) + 
\sum_{r=1}^n \frac{\A_k^r}{r\nu(\eta)^r}\right)\right) \\
&+\sum_{r=1}^{n}\zeta_{k,N}(2s+r)\frac{(-1)^r s}{\Gamma (s+1)}\sum_{b=0}^{r}(z_{r,b}(-\A_k)-z_{r,b}(\A_k)) \\
&\times \frac{\Gamma(s+b+r/2)}{\Gamma (b+r/2)}\, \e^{2s} + \frac{s^2}{\Gamma (s+1)}h(s),
\end{split}
\end{equation}
where $h(s)$ vanishes with its derivative at $s=0$.
\end{prop}

Note the full analogy (up to computationally irrelevant, but geometrically crucial sign differences) 
to the corresponding result in \cite[Proposition 6.10]{Ver:ATO}. An ad verbatim repetition of the 
arguments in the proof of \cite[Corollary 6.1]{Ver:ATO} leads to the final formula.
\begin{cor}\label{total-contribution-1}
Let $(E_N,\nabla_N,h_N)$ be a flat Hermitian vector bundle over an even-dimensional 
oriented closed Riemannian manifold $(N^n,g^N)$.
Denote by $\Delta_{k,ccl,N}$ the Laplacian on coclosed $k-$differential forms 
$\Omega^k_{\textup{ccl}}(N,E_N)$ and put
\begin{align*}
&\A_k:=\frac{(n-1)}{2}-k, \quad 
\nu(\eta)=\sqrt{\eta + \A_k^2}, \ \textup{for} \ \eta \in E_k=\textup{Spec}\Delta_{k,ccl,N}\backslash \{0\}, \\
&\zeta_{k,N}(s)=\sum_{\eta \in E_k} \textup{m}(\eta) \ \nu(\eta)^{-s},\quad \zeta_{k,N}(s, \pm \A_k):=
\sum_{\eta\in E_k} \textup{m}(\eta) \ (\nu(\eta)\pm \A_k)^{-s}, \quad Re(s)\gg0,
\end{align*}
where $\textup{m}(\eta)$ denotes the multiplicity of $\eta \in E_k$.
Then we find in notation of \eqref{MD-polynom} for $\e>0$ sufficiently small
\begin{align*}
\zeta_k'(0,\e) =& \,
\frac{1}{2}\sum_{r=1}^{n/2}\underset{s=2r}{\textup{Res}}\, \zeta_{k,N}(s)\sum_{b=0}^{2r}
\left(z_{2r,b}(-\A_k)-z_{2r,b}(\A_k)\right)\frac{\Gamma'(b+r)}{\Gamma (b+r)} \\
+&\, \zeta_{k,N}'(0,-\A_k)-\zeta_{k,N}'(0,\A_k).
\end{align*}
\end{cor}

\begin{proof}
We follow the approach of \cite[Section 11]{BKD:HKA}. Define
\begin{align*}
K(s,\pm \A_k):=\sum_{\eta \in E_k} \textup{m}(\eta) \ \nu(\eta)^{-2s}\left(-\log\left(1\pm\frac{\A_k}{\nu(\eta)}\right)-
\sum_{r=1}^n\frac{(\mp\A_k)^r}{r\nu(\eta)^r}\right).
\end{align*}
The series $K(0,\pm \A_k)$ converges absolutely, since $\zeta_{k,N}(s)$ converges absolutely for 
$Re(s)\geq n$. In order to evaluate  $K(0,\pm \A_k)$, define
\begin{equation}\label{exp0}
\begin{split}
K_0(s,\pm \A_k)&:= \sum_{\eta \in E_k} \textup{m}(\eta) \ \int_0^{\infty}t^{s-1}e^{-\nu(\eta) t}\left(e^{\mp\A_k t}-
\sum_{r=0}^n\frac{(\mp\A_kt)^r}{r!}\right) dt\\
&=\Gamma(s)\, \zeta_{k,N}(s,\pm\A_k)-\sum_{r=0}^n\frac{(\mp\A_k)^r}{r!}\, 
\Gamma (s+r)\, \zeta_{k,N}(s+r).
\end{split}
\end{equation}
$K_0(s,\pm \A_k)$ is an absolutely convergent sum at $s=0$, since $\zeta_{k,N}(s)$ converges absolutely for 
$Re(s)\geq n$. By construction 
\begin{align}
K(0,\pm \A_k)=K_0(0,\pm \A_k).
\end{align}
Furthermore we find from \eqref{exp0} and regularity of $K_0(s,\pm \A_k)$ at $s=0$
\begin{align}\label{zeta-exp}
\zeta_{k,N}(0,\pm\A_k)= \zeta_{k,N}(0) + \sum_{r=1}^n\frac{(\mp\A_k)^r}{r}
\underset{s=r}{\textup{Res}}\, \zeta_{k,N}(s).
\end{align}
Using \eqref{zeta-exp} we obtain following expansion at $s=0$
\begin{equation}\label{exp1}
\begin{split}
\Gamma(s)\, \zeta_{k,N}(s,\pm\A_k)&\sim  \left(\frac{1}{s}-\gamma+O(s)\right)
\left(\,  \zeta_{k,N}(s,\pm\A_k)- \zeta_{k,N}(0,\pm\A_k)
\, \right) \\  &+\left(\frac{1}{s}-\gamma+O(s)\right) \zeta_{k,N}(0,\pm\A_k) \sim \zeta'_{k,N}(0,\pm\A_k) \\
&+ \left(\frac{1}{s}-\gamma\right)
\left(\zeta_{k,N}(0) + \sum_{r=1}^n\frac{(\mp\A_k)^r}{r}
\underset{s=r}{\textup{Res}}\, \zeta_{k,N}(s)\right) + O(s).
\end{split}
\end{equation}
Similarly we find
\begin{align}\label{exp2}
\Gamma(s)\, \zeta_{k,N}(s)\sim \zeta'_{k,N}(0)+ \left(\frac{1}{s}-\gamma\right) \zeta_{k,N}(0) + O(s), \ \textup{as} \ s\to 0.
\end{align}
Moreover, denoting by PP$\zeta_{k,N}(r)$ the constant term in the asymptotics of $\zeta_{k,N}(s)$ near the 
pole singularity $s=r$, we compute
\begin{equation}\label{exp3}
\begin{split}
\sum_{r=1}^n\frac{(\mp\A_k)^r}{r!}\, \Gamma (s+r)\, \zeta_{k,N}(s+r) &\sim 
\sum_{r=1}^n\frac{(\mp\A_k)^r}{r!}\, \frac{\Gamma (s+r)}{s}\, \underset{s=r}{\textup{Res}}\, \zeta_{k,N}(s) \\
&+ \sum_{r=1}^n\frac{(\mp\A_k)^r}{r} \, \textup{PP}\zeta_{k,N}(r) + O(s), \ \textup{as} \ s\to 0.
\end{split}
\end{equation}
Plugging \eqref{exp1}, \eqref{exp2} and \eqref{exp3} into \eqref{exp0} we arrive at the following
\begin{equation}\label{exp4}
\begin{split}
K(0,\pm \A_k)&=K_0(0,\pm \A_k)=\zeta_{k,N}'(0,\A)
-\zeta_{k,N}'(0) \\ &- \sum_{r=1}^n\frac{(\mp\A_k)^r}{r}\left(\underset{s=r}{\textup{Res}}\,\zeta_{k,N}(s)
\left(\!\gamma +\frac{\Gamma'(r)}{\Gamma(r)}\!\, \right)+\textup{PP}\zeta_{k,N}(r)\right),
\end{split}
\end{equation}
This result corresponds to the result obtained in \cite[p. 388]{BKD:HKA}, up to certain factors due to 
a different notation. Furthermore, we compute straightforwardly
\begin{align*}
\left.\frac{d}{ds}\right|_{0}\!\zeta_{k,N}(2s+r)\frac{s}{\Gamma (s+1)}
\frac{\Gamma\left(s+b+\frac{r}{2}\right)}{\Gamma (b+\frac{r}{2})}=\frac{1}{2}\,
\underset{s=r}{\textup{Res}}\, \zeta_{k,N}(s)\left[\frac{\Gamma'\left(b+\frac{r}{2}\right)}
{\Gamma \left(b+\frac{r}{2}\right)}+
\gamma \right]+\textup{PP}\zeta_{k,N}(r).
\end{align*}
Finally, note by \eqref{MD-polynom} and \eqref{DM} 
\begin{align}\label{zrb}
\sum_{b=0}^{r}(z_{r,b}(-\A_k)-z_{r,b}(\A_k))=M_r(1,-\A_k)-M_r(1,\A_k)=\frac{(-\A_k)^r-\A_k^r}{r}.
\end{align}
Differentiating \eqref{zeta-continued}, we 
arrive at the result; note that $\e-$dependence cancels, since \eqref{zrb} vanishes for $r$ even, whereas on 
the even dimensional closed Riemannian manifold $(N^n,g^N)$ the residue 
$\underset{s=r}{\textup{Res}}\, \zeta_{k,N}(s)$ vanishes for $r$ odd. 
\end{proof}

\begin{prop}\label{h-truncated}
Let $(E_N,\nabla_N,h_N)$ be a flat Hermitian vector bundle over an even-dimensional 
oriented closed Riemannian manifold $(N^n,g^N)$. Denote the Euler characteristic of 
$(N,E_N)$ by $\chi(N,E_N)$ and the Betti numbers by $b_k:=\dim H^k(N,E_N)$. Then 
in notation of Definition \ref{zetas} we find
\begin{equation}
\begin{split}
\sum_{k=0}^{n}\frac{(-1)^{k+1}}{2}\, \zeta'_{k,H}(0,\e)&=
\sum_{k=0}^n\frac{(-1)^{k}}{2} \, b_k \, \log \left( \frac{1-\e^{n-2k+1}}{n-2k+1}\right)
\\ &+\sum_{k=0}^{n/2-1}(-1)^kb_k \sum_{l=0}^{n/2-k-1}\log (2l+1)\\
&+\sum_{k=0}^{n/2-1}\frac{(-1)^{k}}{2}\, b_k\log (n-2k+1)
\end{split}
\end{equation}
\end{prop}

\begin{proof}
By Definition \ref{zetas} we can write
\begin{equation}
\begin{split}
\sum_{k=0}^{n}\frac{(-1)^{k+1}}{2}\, \zeta_{k,H}(s,\e)&=
\sum_{k=0}^n\frac{(-1)^{k+1}}{2} \, b_k \, \zeta(s, H^k_{0,\e,\textup{rel}})\\ 
&-\sum_{k=0}^n\frac{(-1)^{k+1}}{2} \, b_k \, \zeta(s, H^k_{0,\textup{rel}})=:H(s,\e)-H(s).
\end{split}
\end{equation}
$H'(0)$ has been evaluated in \cite[Theorem 7.8]{Ver:ATO} with 
\begin{equation}\label{H1}
\begin{split}
\sum_{k=0}^n\frac{(-1)^{k+1}}{2}\, b_k \, \zeta'(0, H^k_{0,\textup{rel}})=& \,
\frac{\log 2}{2}\chi(N,E_N)-\sum_{k=0}^{n/2-1}\frac{(-1)^k}{2}\, b_k\, \log (n-2k+1)\\
&-\sum_{k=0}^{n/2-1}(-1)^k\, b_k \sum_{l=0}^{n/2-k-1}\log (2l+1).
\end{split}
\end{equation}
We evaluate $H'(0,\e)$ using \cite[Theorem 1.2]{Les:DOR}, which relates the zeta determinants 
to the normalized solutions of the operators, satisfying the corresponding boundary conditions. 
The boundary conditions for $H^k_{0,\e,\textup{rel}}$ 
have been determined in Proposition \ref{rel-bc-prop}, and are given by the Dirichlet boundary conditions. 
By the formula \cite[Theorem 1.2]{Les:DOR} we then find
\begin{align}
\det\nolimits_{\zeta} \left(H^k_{0,\e,\textup{rel}}\right) = 
\frac{\sqrt{\e}}{|\A_k|}\left(\e^{-|\A_k|}-\e^{|\A_k|}\right).
\end{align}
Taking logarithms and employing Poincare duality on $(N,g^N)$ we find
\begin{equation}
\begin{split}
H'(0,\e) &= \sum_{k=0}^n\frac{(-1)^{k}}{2}\, b_k \, \log \det\nolimits_{\zeta} \left(H^k_{0,\e,\textup{rel}}\right) \\
&=\sum_{k=0}^n\frac{(-1)^{k}}{2} \, b_k \, \log \left(\frac{\sqrt{\e}}{|\A_k|}
\left(\e^{-|\A_k|}-\e^{|\A_k|}\right) \right)\\
&=\sum_{k=0}^n\frac{(-1)^{k}}{2} \, b_k \, \log \left(\frac{\sqrt{\e}}{|\A_k+1|}
\left(\e^{-|\A_k+1|}-\e^{|\A_k+1|}\right) \right).
\end{split}
\end{equation}
Obviously, we can replace $|\A_k+1|$ by $(\A_k+1)$ in the expression above, and find 
after straightforward cancellations
\begin{equation}\label{H2}
\begin{split}
H'(0,\e) &= \sum_{k=0}^n\frac{(-1)^{k}}{2} \, b_k \, \log \left(\frac{\sqrt{\e}}{(\A_k+1)}
\left(\e^{-(\A_k+1)}-\e^{(\A_k+1)}\right) \right)\\
&=\sum_{k=0}^n\frac{(-1)^{k}}{2} \, b_k \, \log \left(2\e^{k-n/2}
\left(\frac{1-\e^{n-2k+1}}{n-2k+1}\right) \right)\\
&=\sum_{k=0}^n\frac{(-1)^{k}}{2} \, b_k \, \log 
\left( \frac{1-\e^{n-2k+1}}{n-2k+1}\right) + \frac{\log 2}{2} \chi(N,E_N).
\end{split}
\end{equation}
The statement follows by combination of \eqref{H1} and \eqref{H2}.
\end{proof}

Summing up the expressions in Corollary \ref{total-contribution-1} and Proposition \ref{h-truncated}, 
we arrive at the following result.
\begin{thm}\label{t-difference}
Let $(C(N)=(0,1]\times N, g=dx^2\oplus x^2g^N)$ be an odd-dimensional 
bounded cone over a closed oriented Riemannian manifold $(N^n,g^N)$. 
Denote by $(C_{\e}(N)=[\e,1]\times N, g)$ 
its truncation. Let $(E,\nabla, h^E)$ be a flat complex Hermitian vector bundle 
over $(C(N),g)$ and $(E_N,\nabla_N,h_N)$ its restriction 
to the cross-section $N$. Denote by $\chi(N,E_N)$ the Euler characteristic and by $b_k:=\dim H^k(N,E_N)$ 
the Betti numbers of $(N,E_N)$. Denote by $\Delta_{k,ccl,N}$ the Laplacian on coclosed $k-$differential forms 
$\Omega^k_{\textup{ccl}}(N,E_N)$ and put
\begin{align*}
&\A_k:=\frac{(n-1)}{2}-k, \quad 
\nu(\eta)=\sqrt{\eta + \A_k^2}, \ \textup{for} \ \eta \in E_k=\textup{Spec}\Delta_{k,ccl,N}\backslash \{0\}, \\
&\zeta_{k,N}(s)=\sum_{\eta \in E_k} \textup{m}(\eta) \ \nu(\eta)^{-s},\quad \zeta_{k,N}(s, \pm \A_k):=
\sum_{\eta\in E_k} \textup{m}(\eta) \ (\nu(\eta)\pm \A_k)^{-s}, \quad Re(s)\gg0,
\end{align*}
where $\textup{m}(\eta)$ denotes the multiplicity of $\eta \in E_k$.
Then the difference of the scalar analytic torsions for $(C(N),g)$ and 
$(C_{\e}(N), g)$ is given by the following explicit expression
\begin{align*}
\log &T(C_{\e}(N), E, g)-\log T(C(N), E, g)= 
\sum_{k=0}^n\frac{(-1)^{k}}{2} \, b_k \, \log \left( \frac{1-\e^{n-2k+1}}{n-2k+1}\right)
\\ &+\sum_{k=0}^{n/2-1}(-1)^kb_k \sum_{l=0}^{n/2-k-1}\log (2l+1) + \sum_{k=0}^{n/2-1}\frac{(-1)^k}{2}\, b_k \log (n-2k+1)
\\&+\sum_{k=0}^{n/2-1}\frac{(-1)^k}{4}\sum_{r=1}^{n/2}\underset{s=2r}{\textup{Res}}\, \zeta_{k,N}(s)\sum_{b=0}^{2r}\left(z_{2r,b}(-\A_k)-z_{2r,b}(\A_k)\right)\frac{\Gamma'(b+r)}{\Gamma (b+r)} \\
&+ \sum_{k=0}^{n/2-1}\frac{(-1)^k}{2}
\left( \zeta_{k,N}'(0,-\A_k)-\zeta_{k,N}'(0,\A_k)\right).
\end{align*}
\end{thm}

Comparison of Theorem \ref{t-difference} and Theorem \ref{BV-Theorem} yields the following
\begin{cor}\label{t1}
Let $(C_{\e}(N)=[\e,1]\times N, g,g_0),\e>0,$ be an odd-dimensional cylinder over 
a closed Riemannian manifold $(N,g^N)$, with a pair of Riemannian 
metrics $g=dx^2\oplus x^2g^N$ and $g_0=dx^2\oplus g^N$. 
The Riemannian manifold $(C_{\e}(N),g)$ 
is a truncated cone, while $(C_{\e}(N),g_0)$ is an exact cylinder. 
Fix a flat complex Hermitian vector bundle $(E,\nabla,h^E)$. Then 
\begin{equation}\label{scalar-T}
\begin{split}
&\log T(C_{\e}(N), E, g)=\sum_{k=0}^n\frac{(-1)^{k}}{2} \, b_k \, \log 
\left( \frac{1-\e^{n-2k+1}}{n-2k+1}\right) + \frac{\log 2}{2} \chi(N,E_N) \\
&+\sum_{k=0}^{n/2-1}\frac{(-1)^k}{2}\sum_{r=1}^{n/2}\underset{s=2r}{\textup{Res}}\, \zeta_{k,N}(s)\sum_{b=0}^{2r}\left(z_{2r,b}(-\A_k)-z_{2r,b}(\A_k)\right)\frac{\Gamma'(b+r)}{\Gamma (b+r)},
\end{split}
\end{equation}
and the analytic torsion norms of $(C_{\e}(N), g)$ and 
$(C_{\e}(N), g_{0})$ are related as follows
\begin{equation}\label{e-independence}
\begin{split}
\log \left(\frac{\|\cdot \|^{RS}_{(C_{\e}(N), E, g)}}{\|\cdot \|^{RS}_{(C_{\e}(N), E,
g_0)}}\right)&=\sum_{k=0}^{n/2-1}\frac{(-1)^k}{2}\sum_{r=1}^{n/2}\underset{s=2r}{\textup{Res}}\, 
\zeta_{k,N}(s) \\ & \times \sum_{b=0}^{2r}\left(z_{2r,b}(-\A_k)-z_{2r,b}(\A_k)\right)\frac{\Gamma'(b+r)}{\Gamma (b+r)}.
\end{split}
\end{equation}
\end{cor}

\begin{proof}
The first relation \eqref{scalar-T} follows by a direct comparison of 
Theorem \ref{t-difference} and Theorem \ref{BV-Theorem}. For the second relation note, that 
by definition of analytic torsion norms we have by \eqref{scalar-T}
\begin{align*}
\log \left(\frac{\|\cdot \|^{RS}_{(C_{\e}(N), E; g)}}{\|\cdot \|^{RS}_{(C_{\e}(N), E;
g_0)}}\right)&=\sum_{k=0}^n\frac{(-1)^{k}}{2} \, b_k \, \log 
\left( \frac{1-\e^{n-2k+1}}{n-2k+1}\right) - \frac{1}{2}\chi(N,E_N) \log (1-\e) \\
&+\sum_{k=0}^{n/2-1}\frac{(-1)^k}{2}\sum_{r=1}^{n/2}\underset{s=2r}{\textup{Res}}\, \zeta_{k,N}(s)\sum_{b=0}^{2r}\left(z_{2r,b}(-\A_k)-z_{2r,b}(\A_k)\right)\frac{\Gamma'(b+r)}{\Gamma (b+r)}
\\ &+ \log \left(\frac{\|\cdot \|_{\det H^*(C_{\e}(N), E), g}}
{\|\cdot \|_{\det H^*(C_{\e}(N), E), g_0}}\right)
\end{align*}
The quotient between the norms on $\det H^*(C_{\e}(N), E)$, induced by the $L^2(g,h^E)$ and 
$L^2(g_{0},h^E)$ norms on harmonic forms, amounts by a straightforward computation to 
\begin{align*}
\log \left(\frac{\|\cdot \|_{\det H^*(C_{\e}(N), E), g}}
{\|\cdot \|_{\det H^*(C_{\e}(N), E), g_0}}\right)=\frac{1}{2}\chi(N,E_N) \log (1-\e)
-\sum_{k=0}^n\frac{(-1)^{k}}{2} \, b_k \, \log 
\left( \frac{1-\e^{n-2k+1}}{n-2k+1}\right).
\end{align*}
Hence overall we arrive at the following
\begin{equation*}
\begin{split}
\log \left(\frac{\|\cdot \|^{RS}_{(C_{\e}(N), E; g)}}{\|\cdot \|^{RS}_{(C_{\e}(N), E;
g_0)}}\right)=\sum_{k=0}^{n/2-1}\frac{(-1)^k}{2}\sum_{r=1}^{n/2}\underset{s=2r}{\textup{Res}}\, 
\zeta_{k,N}(s)\sum_{b=0}^{2r}\left(z_{2r,b}(-\A_k)-z_{2r,b}(\A_k)\right)\frac{\Gamma'(b+r)}{\Gamma (b+r)}.
\end{split}
\end{equation*}
\end{proof}

%%%%%%%%%%%%%%%%%%%%
\section{Metric Anomaly at the Regular Boundary of the Cone}\label{section-anomaly}
%%%%%%%%%%%%%%%%%%%%

Consider a truncated cone $C_{\e}(N)=[\e,1]\times N, g=dx^2\oplus x^2 g^N$ 
over a closed oriented Riemannian manifold $(N^n,g^N)$. The Levi-Civita connection $\nabla^{TC_{\e}(N)}$, 
induced by $g$, defines secondary classes $B_{\e}(\nabla^{TC_{\e}(N)})$ and $B_{1}(\nabla^{TC_{\e}(N)})$ 
at the left  $\{x=\e\}\times N$ and the right  $\{x=1\}\times N$ boundary components of $C_{\e}(N)$, respectively.

Introducing new coordinates $y=\log (1/x)$ near the right boundary component $\{x=1\}\times N$ of $C_{\e}(N)$, 
and $z=\log (x/\e)$ near the left boundary component $\{x=\e\}\times N$, 
we can write for $\delta >0$ small the Riemannian metric $g$ as follows
\begin{equation}
\begin{split}
g=e^{-2y}\left(dy^2+g^N\right), \ y \in [0,\delta), \ \textup{near $\{x=1\}\times N$ of $C_{\e}(N)$}, \\
g=\e^2e^{2z}\left(dz^2+g^N\right), \ z \in [0,\delta), \ \textup{near $\{x=\e\}\times N$ of $C_{\e}(N)$}. 
\end{split}
\end{equation}
By Proposition \ref{Scaling} and in view of \eqref{RS2} and the explicit formulae in \eqref{RS3}, we deduce
\begin{align}\label{2B}
B_{\e}(\nabla^{TC_{\e}(N)})=B_{1}(\nabla^{TC_{\e}(N)})=:B_1(g^N),
\end{align}
independent of $\e>0$. Note, that the metric anomalies at $x=\e$ and $x=1$ are defined 
with respect to different inward unit normal vectors at the boundary, which amounts to an additional 
sign only in case of an odd-dimensional cross section. In our setup of 
even-dimensional cross-section, 
both anomalies coincide. Since the corresponding secondary classes induced 
by the product metric on $C_{\e}(N)$ are zero, we arrive by 
\eqref{BM-thm} at the following proposition.

\begin{prop}\label{BM+}
Let $(C_{\e}(N)=[\e,1]\times N, g,g_0),\e>0,$ be an odd-dimensional cylinder 
over 
a closed Riemannian manifold $(N,g^N)$, with a pair of Riemannian 
metrics $g=dx^2\oplus x^2g^N$ and $g_0=dx^2\oplus g^N$. 
The Riemannian manifold $(C_{\e}(N),g)$ 
is a truncated cone, while $(C_{\e}(N),g_0)$ is an exact cylinder. 
Fix a flat complex Hermitian vector bundle $(E,\nabla,h^E)$. 
Then the analytic torsion norms are related as follows
\begin{align}
\log \left(\frac{\|\cdot \|^{RS}_{(C_{\e}(N), E; g)}}
{\|\cdot \|^{RS}_{(C_{\e}(N), E; g_0)}}\right)=-\textup{rank}(E) \int_N B_{1}(g^N).
\end{align}
\end{prop}

Comparing Corollary \ref{t1} and Proposition \ref{BM+}, we arrive at the following result.
\begin{cor}\label{t2}
Let $(C(N)\cong (0,1)\times N, g=dx^2\oplus x^2g^N)$ be an 
odd-dimensional bounded cone over a closed oriented Riemannian manifold 
$(N^n,g^N)$. Let $(E,\nabla, h^E)$ be a flat complex Hermitian vector bundle and $(E_N,\nabla_N,h_N)$
its restriction to the cross-section $N$ over $C(N)$.
Then, in the notation of Section \ref{section-truncated}, the integral of the secondary class
$B_{1}(g^N)$ may be expressed as follows
\begin{align*}
\textup{rank}(E) \int_N B_{1}(g^N) = \sum_{k=0}^{n/2-1}\frac{(-1)^{k+1}}{2}
\sum_{r=1}^{n/2}\underset{s=2r}{\textup{Res}}\, \zeta_{k,N}(s)
\sum_{b=0}^{2r}\left(z_{2r,b}(-\A_k)-z_{2r,b}(\A_k)\right)\frac{\Gamma'(b+r)}{\Gamma (b+r)}.
\end{align*}
\end{cor}

\begin{remark}
The quotient of analytic torsion norms in \eqref{e-independence} is independent of $\e>0$, which 
also implies scaling invariance of the secondary class $B_{1}(\nabla^{TC_{\e}(N)})$ and the statement 
of Proposition \ref{BM+} follows, independently of the general result in Proposition \ref{Scaling}.
\end{remark}

\begin{example}
The following example might be illuminating for the identity in Corollary \ref{t2}.
Consider the special case of $N=T^2$ being the two-dimensional flat torus, and 
$E$ a trivial line bundle. Then one can verify Corollary \ref{t2} by direct computations.
Indeed, from \cite[(3.6), (3.7)]{BGKE:ZFD} we infer
\begin{equation}
\begin{split}
M_2(t,\A)=\sum_{b=0}^1 z_{2,b}(\A)t^{2+2b}= 
\left(-\frac{3}{16}+ \frac{\A}{2}-\frac{\A^2}{2}\right)t^2 +
\left(\frac{5}{8}-\frac{\A}{2}\right)t^4 - \frac{7}{16}t^6.
\end{split}
\end{equation}
Consequently, with $n=2$ and $\A_0=1/2$, we find
\begin{equation}
\begin{split}
&\left(z_{2,0}(-1/2)-z_{2,0}(1/2)\right)\frac{\Gamma'(1)}{\Gamma (1)}=
 -\frac{1}{2}\frac{\Gamma'(1)}{\Gamma (1)}=\frac{\gamma}{2}, \\
&\left(z_{2,1}(-1/2)-z_{2,1}(1/2)\right)\frac{\Gamma'(2)}{\Gamma (2)}= 
\frac{1}{2}\frac{\Gamma'(2)}{\Gamma (2)}=\frac{1}{2}(1-\gamma), \\
&\left(z_{2,2}(-1/2)-z_{2,2}(1/2)\right)\frac{\Gamma'(3)}{\Gamma (3)}= 0.
\end{split}
\end{equation}
Moreover, the heat trace expansion for the flat two-dimensional torus implies
$$
\underset{s=2}{\textup{Res}}\, \zeta_{0,N}(s) = 
2 \cdot \underset{s=1}{\textup{Res}}\, \zeta(s, \Delta_{0,N})
= 2 \cdot (4\pi)^{-1}\textup{Vol}(T^2).
$$
In total we find for the right hand side of the equality 
in Corollary \ref{t2}
\begin{equation}\label{RHS}
\begin{split}
\sum_{k=0}^{n/2-1}\frac{(-1)^{k+1}}{2}&\sum_{r=1}^{n/2}\underset{s=2r}{\textup{Res}}\, \zeta_{k,N}(s)
\sum_{b=0}^{2r}\left(z_{2r,b}(-\A_k)-z_{2r,b}(\A_k)\right)\frac{\Gamma'(b+r)}{\Gamma (b+r)} \\
&= -\frac{1}{8\pi}\textup{Vol}(T^2).
\end{split}
\end{equation}
On the other hand, the expression for the integral of $B_{1}(g^N)$ follows from \cite[(4.43)]{BruMa:AAF},
which in our special case reduces to 
\begin{align}\label{LHS}
\int_N B_{1}(g^N)=-\frac{1}{2\pi}\left(\frac{1}{2}\right)^2\textup{Vol}(T^2)=-\frac{1}{8\pi}\textup{Vol}(T^2).
\end{align}
Both, \eqref{RHS} and \eqref{LHS} agree, as asserted by Corollary \ref{t2}.
\end{example}

Corollary \ref{t2} together with Theorem \ref{BV-Theorem} 
leads to the main result of this section, announced in Theorem \ref{main2}. 
\begin{thm}\label{main1+}
Let $(C(N)\cong (0,1)\times N, g=dx^2\oplus x^2g^N)$ be an 
odd-dimensional bounded cone over a closed oriented Riemannian manifold 
$(N^n,g^N)$. Let $(E,\nabla, h^E)$ be a flat complex Hermitian vector bundle and $(E_N,\nabla_N,h_N)$
its restriction to the cross-section $N$ over $C(N)$.
Let $b_k=\dim H^k(N,E_N)$ be the Betti numbers and $\chi(N,E_N)$ the 
Euler characteristic of $(N,E_N)$. Denote by $\Delta_{k,ccl,N}$ the 
Laplacian on coclosed $k-$differential forms $\Omega^k_{\textup{ccl}}(N,E_N)$. Put $\A_k=(n-1)/2-k$ and define
\begin{align*}
&\nu(\eta)=\sqrt{\eta + \A_k^2}, \ \textup{for} \ \eta \in E_k=\textup{Spec}\Delta_{k,ccl,N}\backslash \{0\}, \\
&\zeta_{k,N}(s, \pm \A_k):=\sum_{\eta\in E_k} \textup{m}(\eta) \ (\nu(\eta)\pm \A_k)^{-s}, \quad Re(s)\gg0,
\end{align*}
where $\textup{m}(\eta)$ denotes the multiplicity of $\eta \in E_k$.
Then the logarithm of the scalar analytic torsion of $(C(N), g)$, is given by\begin{align*}
\log T(C(N), E, g)&= 
\sum_{k=0}^{n/2-1}\frac{(-1)^{k+1}}{2}\, b_k\left(\sum_{l=0}^{n/2-k-1}\log (2l+1)^2+\log (n-2k+1)\right) \\
& +\sum_{k=0}^{n/2-1}\frac{(-1)^k}{2} \left(\zeta_{k,N}'(0,\A_k)-\zeta_{k,N}'(0,-\A_k)\right)\\
& +\frac{\log 2}{2}\, \chi(N,E_N) - \frac{1}{2}\, \textup{rank}(E) \int_N B_{1}(g^N).
\end{align*}
\end{thm}
Theorem \ref{main1+} identifies the residual term in the formula for analytic torsion of a bounded 
cone in Theorem \ref{BV-Theorem} in terms of the metric anomaly of analytic torsion at the regular boundary of 
the cone. This identifies the actual contribution of the conical singularity to the analytic torsion, clearing up the formula 
in Theorem \ref{BV-Theorem} of the contributions from the regular boundary. 
We can rewrite Theorem \ref{main1+} in terms of the analytic torsion norm.
\begin{cor}\label{main2+}
Let $C(N)=(0,1]\times N, g=dx^2\oplus x^2g^N$ be an odd-dimensional bounded 
cone over a closed oriented Riemannian manifold $(N^n,g^N)$. Consider $(C(N),g_0)$, where 
$g_{\textup{pr}}$ coincides with $g$ near the singularity at $x=0$ and is product $dx^2\oplus g^N$ 
near the boundary $\{x=1\}\times N$. 
Let $(E,\nabla, h^E)$ be a flat complex Hermitian vector bundle and $(E_N,\nabla_N,h_N)$
its restriction to the cross-section $N$ over $C(N)$.
Then the quotient of analytic torsion norm for $(C(N),g_{\textup{pr}})$ and the 
$L^2(g,h^E)-$induced norm on $\det H^k(C(N),E)$ is given by
\begin{align*}
\log \left(\frac{\|\cdot \|^{RS}_{(C(N), E; g_{\textup{pr}})}}{\|\cdot \|_{\det H^k(C(N),E), g}}\right) = 
\sum_{k=0}^{n/2-1}(-1)^{k+1}\frac{b_k}{2}\left(\sum_{l=0}^{n/2-k-1}\log (2l+1)^2+\log (n-2k+1)\right)\\ 
+\, \frac{\log 2}{2}\, \chi(N,E_N) +\sum_{k=0}^{n/2-1}\frac{(-1)^k}{2} \left(\zeta_{k,N}'(0,\A_k)-\zeta_{k,N}'(0,-\A_k)\right).
\end{align*}
\end{cor}

\begin{proof}
The metric variation for analytic torsion is local by the gluing formula in \cite{Les:GFA} 
and hence the metric anomaly formula of Br\"uning-Ma in\cite{BruMa:AAF} holds also in case of 
manifolds with isolated conical singularities away from the variation region, so that 
\eqref{BM-thm} applies to $(C(N), g, g_{\textup{pr}})$. 
\begin{align*}
\log \left(\frac{\|\cdot \|^{RS}_{(C(N), E; g_{\textup{pr}})}}{\|\cdot \|_{\det H^k(C(N),E), g}}\right)
&= \log \left(\frac{\|\cdot \|^{RS}_{(C(N), E; g_{\textup{pr}})}}{\|\cdot \|^{RS}_{(C(N), E;
g)}}\right) +\log T(C(N), E; g) \\
&= \log T(C(N), E; g)+ \frac{1}{2}\, \textup{rank}(E) \int_N B_{1}(g^N).
\end{align*}
The claim follows by Theorem \ref{main1+}.
\end{proof}

%%%%%%%%%%%%%%%%%%
\section{Asymptotics of the New Torsion-Like Spectral Invariant}
\label{section-torsion-like}
%%%%%%%%%%%%%%%%%%

Analytic torsion defines a topological invariant of an odd-dimensional closed oriented Riemannian manifold 
with a flat Hermitian vector bundle. In even dimensions, analytic torsion is trivial as a consequence of Poincare duality. 
Theorem \ref{main1+} identifies the contribution of a conical singularity to analytic torsion in terms of
a new torsion-like spectral invariant of the even-dimensional cross-section, which is non-trivial despite Poincare 
duality and deserves an independent definition.

\begin{defn}
Let $(N^n,g^N)$ be an even-dimensional closed oriented Riemannian manifold and $(E_N,\nabla_N,h)$ a flat 
flat complex Hermitian vector bundle. Denote by $\Delta_{k,ccl,N}$ the corresponding
Laplacian on coclosed $k-$differential forms $\Omega^k_{\textup{ccl}}(N,E_N)$. Put $\A_k=(n-1)/2-k$ and define
\begin{align*}
&\nu(\eta)=\sqrt{\eta + \A_k^2}, \ \textup{for} \ \eta \in E_k=\textup{Spec}\Delta_{k,ccl,N}\backslash \{0\}, \\
&\zeta_{k,N}(s, \pm \A_k):=\sum_{\eta\in E_k} \textup{m}(\eta) \ (\nu(\eta)\pm \A_k)^{-s}, \quad Re(s)\gg0,
\end{align*}
where $\textup{m}(\eta)$ denotes the multiplicity of $\eta \in E_k$.
Then the "torsion-like" invariant $\textup{Tors}(N,E_N;g^N)$ is defined as
\begin{align*}
 \textup{Tors}(N,E_N; g^N):=\frac{1}{2}\sum_{k=0}^{n/2-1}(-1)^k \left(\zeta_{k,N}'(0,\A_k)-
\zeta_{k,N}'(0,-\A_k)\right)=\frac{1}{2}\sum_{k=0}^{n-1}(-1)^k \zeta_{k,N}'(0,\A_k).
\end{align*}
\end{defn} 

We study the asymptotic behavior of $\textup{Tors}(N,E_N; g^N)$ under scaling of $g^N$, 
employing the perturbation analysis by Gambia-Mushiest-Solo min in \cite{GMS:OPT}. 
Let $A$ be an elliptic non-negative self-adjoint differential operator of second order over a closed 
Riemannian manifold $(N, g^N)$ acting on sections of a Hermitian vector bundle $E$. Consider 
the square root $A_{\A}:=\sqrt{A+\A^2}$, which is a self-adjoint pseudo-differential operator 
of first order. Following the arguments of \cite[Corollary 1]{GMS:OPT} we arrive at the following result.

\begin{thm}\label{perturbation1}
The zeta-function of $(\Ae+\A)$ is analytic  for $\textup{Re}(s)>-(\dim N+2)$
with the following expansion \textup{(} $K=\dim N+2$ \textup{)}
$$\zeta(s,\Ae+\A)=\zeta(s,\Ae)+\sum_{k=1}^{K-1}\A^k\mathscr{T}_k(s,\A)+\A^{K}\mathscr{R}_K(s,\A),$$
where $\mathscr{R}_K(s,\A)$ is holomorphic and $\mathscr{R}_K, \partial_s\mathscr{R}_K$ are locally uniformly bounded in $\A$, and
$$\mathscr{T}_k(s,\A)=\frac{(-1)^k}{k!}\zeta (s+k, \Ae)\prod_{j=0}^{k-1}(s+j).$$
\end{thm}  

We now can prove the main result of this subsection.

\begin{cor}\label{zeta-convergence}
The zeta-functions of $(\Ae\pm\A)$ are regular at $s=0$ and 
$$\left.\frac{d}{ds}\right|_{0}\zeta(s,\Ae+\A)-\left.\frac{d}{ds}\right|_{0}\zeta(s,\Ae-\A)=O(\A), \quad \A\to 0.$$ 
\end{cor}
\begin{proof}
Theorem \ref{perturbation1} implies for $K=\dim N+2$
\begin{equation*}
  \zeta(s,\Ae+\A)-\zeta(s,\Ae-\A)=\sum_{k=0, \, \textup{odd}}^{K-1}\! 2 \, \A^{k} \mathscr{T}_{k}(s,\A) + 
\A^K\left(\mathscr{R}_K(s,\A)-\mathscr{R}_K(s,-\A))\right).
\end{equation*}
The term $\left(\mathscr{R}_K(s,\A)-\mathscr{R}_K(s,-\A))\right)$, and also its $\partial_s$-differential, are both regular at $s=0$ 
and locally uniformly bounded in $\A\in \R$. Hence it remains to analyze the terms $\mathscr{T}_k(s,\A)$.
Repeating the arguments of Theorem \ref{perturbation1} now for $(A+\A^2)$ as a perturbation of $A$, we find
\begin{equation}\label{zrm}
 \begin{split}
  \mathscr{T}_k(s,\A)
&=\zeta \left(\frac{(s+k)}{2}, A+\A^2\right)\left(\frac{(-1)^k}{k!}\prod_{j=0}^{k-1}(s+j)\right) \\
&=\left(\frac{(-1)^k}{k!}\prod_{j=0}^{k-1}(s+j)\right)\left[\zeta\left(\frac{(s+k)}{2}, A\right) + 
\sum_{p=1}^{F-1}\A^{2p}\zeta\left(\frac{(s+k)}{2}+p, A\right) \right. \\
&\left.\times \left(\frac{(-1)^p}{p!}\prod_{j=0}^{p-1}\left((\frac{(s+k)}{2}+j\right)\right)
+\widehat{\mathscr{R}}_F((s+n)/2,\A)\right],
 \end{split}
\end{equation}
Again, the term $\widehat{\mathscr{R}}_F$ and also its $\partial_s-$differential, are both regular at $s=0$ and locally 
uniformly bounded in $\A\in \R$. The zeta-functions $\zeta((s+k)/2+j,A)$ are meromorphic possibly with simple poles at $s=0$, 
canceled by the additional $s$-factor in \eqref{zrm}, so that $\mathscr{T}_k(s,\A)$ is regular at $s=0$ 
and its derivative at $s=0$ is locally uniformly bounded in $\A\in \R$. The statement now follows.
\end{proof}

We close the section by deriving the scaling behavior of the torsion-like invariant (Theorem \ref{main4}), 
combining of Theorem \ref{perturbation1} and Corollary \ref{zeta-convergence}.

\begin{thm}
Let $(N^n,g^N)$ be a closed oriented Riemannian manifold of even dimension and $(E_N,\nabla_N,h)$ a
flat complex Hermitian vector bundle. Let $g^N(\mu):=\mu^{-2}g^N, \mu\in \R^+$ and 
denote by $\Delta_{k,ccl,N}(\mu)$ the corresponding Laplacian on coclosed $k-$differential 
forms $\Omega^k_{\textup{ccl}}(N,E_N)$. Put $\A_k=(n-1)/2-k$ and define
\begin{align*}
&\nu(\eta)=\sqrt{\eta + \A_k^2}, \ \textup{for} \ \eta \in E_k=\textup{Spec}\Delta_{k,ccl,N}\backslash \{0\}, \\
&\zeta_{k,N}(s, \pm \A_k):=\sum_{\eta\in E_k} \textup{m}(\eta) \ (\nu(\eta)\pm \A_k)^{-s}, \quad Re(s)\gg0,
\end{align*}
where $\textup{m}(\eta)$ denotes the multiplicity of $\eta \in E_k$.
Then the associated family of "torsion-like" invariants $\textup{Tors}(N,E_N; g^N(\mu))$ 
admits the following asymptotic behavior as $\mu \to \infty$
\begin{align*}
\textup{Tors}(N,E_N; g^N(\mu))=\frac{1}{2}\sum_{k=0}^{n/2-1}(-1)^k \left(\zeta_{k,N}'(0,\A_k, \mu)-
\zeta_{k,N}'(0,-\A_k, \mu)\right)=O\left(\frac{\log \mu}{\mu}\right).
\end{align*}
\end{thm}

\begin{proof}
Write $A$ for the coclosed Laplacian on $\Omega^k_{\textup{ccl}}(N,E_N)$ associated to $g^N$.
Put $\A(\mu):= \A_k\mu^{-1}$ and consider $A_{\A(\mu)}$ in the notation as fixed before.
By definition, $\Delta_{k,ccl,N}(\mu)=\mu^2 A $, and consequently, $\textup{Spec}\Delta_{k,ccl,N}(\mu)=\mu^2 \textup{Spec}A$. Hence
$$\zeta_{k,N}(s,\pm\A_k, \mu)=\mu^{-s}\zeta(s, A_{\A(\mu)} \pm \A(\mu)).$$
Taking derivatives at $s=0$ we obtain
$$\left.\frac{d}{ds}\right|_{s=0}\zeta_{k,N}(0,\pm\A_k, \mu)=-\log \mu \cdot \zeta(0, A_{\A(\mu)}\pm \A(\mu))+\zeta'\left(0, A_{\A(\mu)}\pm \A(\mu)\right). $$
Theorem \ref{perturbation1} and Corollary \ref{zeta-convergence} assert that as $\mu\to \infty$
\begin{align*}
\zeta'(0, A_{\A(\mu)}+ \A(\mu))-\zeta'(0, A_{\A(\mu)}- \A(\mu))&=O\, \left(\frac{1}{\mu}\right),\\
\zeta(0, A_{\A(\mu)}+ \A(\mu))-\zeta(0, A_{\A(\mu)}- \A(\mu))&=O\,\left(\frac{1}{\mu}\right).
\end{align*}
This proves the statement.
\end{proof}

\bibliography{localbib.bib}
\bibliographystyle{amsalpha-lmp}

\listoffigures
\end{document}